\renewcommand{\qed}{\hfill\small{$\square$}\normalsize}
\theoremstyle{definition}
\newtheorem{lemma}{Lemma}[section]
\newtheorem{definition}[lemma]{Definition}
\newtheorem{proposition}[lemma]{Proposition}
\newtheorem{theorem}[lemma]{Theorem}
\newtheorem{corollary}[lemma]{Corollary}
\newtheorem{remark}{Remark}
\newtheorem{claim}{Claim}
\numberwithin{equation}{section}
\renewcommand{\qed}{\hfill\small{$\square$}\normalsize}
\DeclareFixedFont{\Acknowledgment}{OT1}{cmr}{bx}{n}{14pt}
\begin{document}

\title{\bf On the deformation of inversive distance circle packings}
\author{Huabin Ge, Wenshuai Jiang}
\maketitle

\begin{abstract}
In this paper, we generalize Chow-Luo's combinatorial Ricci flow to inversive distance circle packing setting. Although the solution to the generalized flow may develop singularities in finite time, we can always extend the solution so as it exists for all time and converges exponentially fast. Thus the generalized flow can be used to deform any inversive distance circle packing to a unique packing with prescribed cone angle. We also give partial results on the range of all admissible cone angles, which generalize the classical Andreev-Thurston's theorem.
\end{abstract}


\section{Introduction}\label{Introduction}
In his work on constructing hyperbolic metrics on three dimensional manifolds, Thurston \cite{T1} introduced patterns of circles on triangulated surfaces. In the pioneered work of Chow and Luo \cite{CL1}, they first established an intrinsic connection between Thurston's circle packing and surface Ricci flow. In fact, Chow and Luo introduced an analog of Hamilton's Ricci flow in the combinatorial setting, which converges exponentially fast to Thurston's circle packing on surfaces. As
a consequence, they obtained a new proof of Thurston's existence of circle packing theorem and a new algorithm to find circle packings. However, Thurston's circle packing requires adjacent circles intersect with each other, which is too restrictive. Hence Bowers and Stephenson \cite{Bowers-Stephenson} introduced inversive distance circle packing, which allow adjacent circles to be disjoint and measure their relative positions by the inversive distance. Bowers-Stephenson's relaxation of intersection condition is very useful for practical applications, especially in medical imaging and computer graphics fields, see Hurdal and Stephenson \cite{Hurdal-Stephenson} for example. In this paper, we will generalize Chow-Luo's combinatorial Ricci flow to Bowers-Stephenson's inversive distance circle packing setting. The main idea in this paper comes from readings of Bobenko, Pincall and Springborn \cite{Bobenko}, Guo \cite{Guoren}, Luo \cite{Luo1}, Glickenstein \cite{Glickenstein1}, Colin de Verdi$\grave{e}$re \cite{Colindev}, Rivin \cite{Ri} and Marden and Rodin \cite{Marden-Rodin}. We follow the approach pioneered by Luo \cite{Luo1}.

Suppose $M$ is a closed surface with a triangulation $\mathcal{T}=\{V,E,F\}$,
where $V,E,F$ represent the sets of vertices, edges and faces respectively.
Let $\Phi: E\rightarrow [0,\frac{\pi}{2}]$ be a function assigning each edge $\{ij\}$ a weight $\Phi_{ij}\in [0,\frac{\pi}{2}]$.
The triple $(M, \mathcal{T}, \Phi)$ will be referred to as a weighted triangulation of $M$ in the following.
All the vertices are ordered one by one, marked by $1, \cdots, N$, where $N=V^\sharp$
is the number of vertices. Throughout this paper, all functions $f: V\rightarrow \mathds{R}$ will be regarded as column
vectors in $\mathds{R}^N$ and $f_i$ is the value of $f$ at $i$. Each map $r:V\rightarrow (0,+\infty)$ is called a circle packing metric.
Given $(M, \mathcal{T}, \Phi)$, we equip each edge $\{ij\}$ with a length
\begin{equation}\label{definition of length of edge}
l_{ij}^\Phi=\sqrt{r_i^2+r_j^2+2r_ir_j\cos \Phi_{ij}}.
\end{equation}
Thurston claimed \cite{T1} that, for each face $\{ijk\}\in F$, the three lengths $\{l_{ij}^\Phi, l_{jk}^\Phi, l_{ik}^\Phi\}$ satisfy the triangle inequalities.
Thus the triangulated surface $(M, \mathcal{T}, \Phi)$ could be taken as gluing many Euclidean triangles coherently. Suppose $\theta_i^{jk}$ is the inner angle of the triangle $\{ijk\}$ at the vertex $i$, the classical well-known discrete Gaussian curvature at each vertex $i$ is
\begin{equation}\label{classical Gauss curv}
K_i=2\pi-\sum_{\{ijk\} \in F}\theta_i^{jk},
\end{equation}
and the discrete Gaussian curvature $K_i$ satisfies the following discrete version of Gauss-Bonnet formula \cite{CL1}:
\begin{equation}\label{Gauss-Bonnet}
\sum_{i=1}^NK_i=2\pi \chi(M).
\end{equation}

By the discrete Gauss-Bonnet formula (\ref{Gauss-Bonnet}), the average of total discrete Gaussian curvature $K_{av}=\sum_{i=1}^NK_i/N$ is determined only by the topological and combinatorial information of $M$; that is
\begin{equation}
K_{av}=\frac{2\pi \chi(M)}{N}.
\end{equation}
Different from the smooth surface case, the constant curvature metric, i.e., a metric $r$ with $K_i=K_{av}$ for all $i\in V$, does not always exist. Besides the topological structure, the combinatorial structure plays an essential role for the existence of constant curvature metric (see Theorem \ref{Thm-thurston} in Section \ref{section-andreev-thurston}). Moreover, if the constant curvature metric exists, it is unique up to a scalar multiplication. This is a deep result implied in Andreev-Thurston theorem which is stated in detail in Section \ref{section-andreev-thurston} (see Theorem \ref{Thm-Andreev-thurston}). For a fixed circle packing, it is obviously that the curvatures are completely determined by these circle radii. On the contrary, the Andreev-Thurston theorem answers the question about the existence and uniqueness of radii of circle patterns when curvatures are given. The uniqueness, or say ``rigidity" result, says that the circle packing is uniquely determined by curvatures. While the existence result says that all possible curvatures form an open convex polytope.

In order to deform Thurston's circle packing metrics, Chow and Luo \cite{CL1} introduced a combinatorial Ricci flow
\begin{equation}\label{Def-ChowLuo's flow}
\begin{cases}
{r_i}'(t)=-K_ir_i \\
\;\,r(0)\in \mathds{R}^N_{>0}.
\end{cases}
\end{equation}
and its normalization
\begin{equation}\label{Def-ChowLuo's normalized flow}
\begin{cases}
{r_i}'(t)=(K_{av}-K_i)r_i\\
\;\,r(0)\in \mathds{R}^N_{>0}.
\end{cases}
\end{equation}
They proved
\begin{theorem}\label{Thm-Chow-Luo} \textbf{(Chow-Luo)} \;For any initial metric $r(0)$, the solution to flow (\ref{Def-ChowLuo's normalized flow}) exists for all time.
Additionally, flow (\ref{Def-ChowLuo's normalized flow}) converges if and only if there exists a metric of constant curvature. Furthermore, if the solution converges, then it converges exponentially fast to the metric of constant curvature.
\end{theorem}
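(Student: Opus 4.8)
The plan is to pass to the logarithmic coordinates $u_i=\log r_i$, under which the normalized flow (\ref{Def-ChowLuo's normalized flow}) becomes the autonomous system $\dot u_i=K_{av}-K_i$ with smooth right-hand side. Because every weight satisfies $\Phi_{ij}\in[0,\frac{\pi}{2}]$, Thurston's triangle inequalities hold for every $r\in\mathds{R}^N_{>0}$, so the admissible configuration space in the $u$-coordinates is all of $\mathds{R}^N$ and no triangle can degenerate along the flow. Since each inner angle $\theta_i^{jk}\in(0,\pi)$, the curvatures $K_i$ in (\ref{classical Gauss curv}) and hence the right-hand sides $K_{av}-K_i$ are uniformly bounded, so $\|u(t)\|$ grows at most linearly and the unique solution cannot escape in finite time. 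This settles the long-time existence assertion immediately.

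The heart of the matter is the variational structure. First I would record the rigidity fact that the Jacobian $\Lambda=\big(\tfrac{\partial K_i}{\partial u_j}\big)$ is symmetric, positive semi-definite, of discrete-Laplacian type, with kernel exactly $\mathds{R}\mathbf{1}$, where $\mathbf{1}=(1,\dots,1)$. Symmetry comes from the standard single-triangle computation $\tfrac{\partial\theta_i^{jk}}{\partial u_j}=\tfrac{\partial\theta_j^{ik}}{\partial u_i}$ summed over faces; the kernel statement reflects the scaling invariance $K(u+c\mathbf{1})=K(u)$, which forces each row of $\Lambda$ to sum to zero; and positive semi-definiteness, with strict positivity on $\mathbf{1}^{\perp}$, is precisely the infinitesimal rigidity underlying the Andreev--Thurston theorem (Theorem \ref{Thm-Andreev-thurston}). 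Symmetry makes the $1$-form $\sum_i(K_i-K_{av})\,du_i$ closed on the simply connected domain $\mathds{R}^N$, hence exact: it is the differential of a potential $\Psi(u)$ with $\nabla\Psi=K-K_{av}\mathbf{1}$ and Hessian $\Lambda$. Thus the flow is exactly the negative gradient flow $\dot u=-\nabla\Psi$ of the convex function $\Psi$. Moreover $\tfrac{d}{dt}\sum_i u_i=\sum_i(K_{av}-K_i)=NK_{av}-2\pi\chi(M)=0$ by Gauss--Bonnet (\ref{Gauss-Bonnet}), so the trajectory stays on the affine hyperplane $H=\{\sum_i u_i=\sum_i u_i(0)\}$, on which $\Psi$ is \emph{strictly} convex.

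With this structure the two remaining assertions follow. For the ``only if'' direction, if $u(t)\to u_\infty$ then $\dot u(t)\to K_{av}\mathbf{1}-K(u_\infty)$, and a nonzero limit is incompatible with convergence of $u$, so $K(u_\infty)=K_{av}\mathbf{1}$ and $e^{u_\infty}$ is a constant curvature metric. For the ``if'' direction, assume a constant curvature metric $\bar u$ exists; by scaling invariance the line $\bar u+\mathds{R}\mathbf{1}$ meets $H$ in a single point $\bar u_H$, which is a critical point and hence, by strict convexity, the unique global minimum of $\Psi|_H$. Monotonicity of the gradient of the convex $\Psi$ then yields $\tfrac{d}{dt}\|u-\bar u_H\|^2=-2\langle u-\bar u_H,\,\nabla\Psi(u)-\nabla\Psi(\bar u_H)\rangle\le 0$, so $\|u(t)-\bar u_H\|$ is nonincreasing and the whole trajectory is trapped in a compact set $B\subset H$. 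Since $u(t),\bar u_H\in H$ force $u-\bar u_H\in\mathbf{1}^{\perp}$, the uniform lower bound $\Lambda|_{\mathbf{1}^{\perp}}\ge\lambda>0$ on the compact set $B$ upgrades this to $\tfrac{d}{dt}\|u-\bar u_H\|^2\le-2\lambda\|u-\bar u_H\|^2$; integrating gives $\|u(t)-\bar u_H\|\le e^{-\lambda t}\|u(0)-\bar u_H\|$, the claimed exponential convergence to the constant curvature metric $e^{\bar u_H}$.

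The main obstacle is entirely concentrated in the rigidity lemma of the second paragraph: establishing that $\Lambda$ is positive semi-definite with the correct one-dimensional kernel is the genuine content, as it encodes the convexity that drives every subsequent step. Everything afterward is soft --- closedness and exactness, the gradient-flow reformulation, the monotonicity trapping argument, and the eigenvalue estimate are routine once convexity is in hand. I expect this is also exactly the point that must be reworked for the inversive distance generalization pursued in this paper, where the admissible region in the $u$-coordinates is no longer all of $\mathds{R}^N$ and $\Psi$ fails to be globally convex, so that the flow may leave the admissible region in finite time and the convexity/coercivity argument must be replaced by a suitable extension of the flow.
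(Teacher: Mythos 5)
Your proposal is correct, but note that the paper itself gives no proof of Theorem \ref{Thm-Chow-Luo}: it is quoted from Chow--Luo \cite{CL1}, and the closest argument in the paper is its proof of the analogous extended statement, Theorem \ref{thm-section3-mainthm}. Measured against that proof, your route to convergence is genuinely different. The paper's scheme is: properness of the potential on the hyperplane (Proposition \ref{Prop-F-tura}) traps the orbit in a compact set; the mean value theorem extracts times $t_n$ with $\varphi'(t_n)\to 0$, so a subsequential limit has constant curvature; the uniqueness statement (Corollary \ref{corollary-unique-K-average}) identifies that limit; and exponential decay appears only at the very end, by linearizing at $u_{av}$ and invoking asymptotic stability. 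You instead run a distance-contraction argument: monotonicity of the gradient of the convex potential gives $\frac{d}{dt}\|u-\bar u_H\|^2\le 0$, trapping the orbit in a compact \emph{convex} set $B\subset H$ without any properness; then, writing $\langle\nabla\Psi(u)-\nabla\Psi(\bar u_H),u-\bar u_H\rangle=\int_0^1 (u-\bar u_H)^T L\big(\bar u_H+s(u-\bar u_H)\big)(u-\bar u_H)\,ds$ and using the uniform bound $L\ge\lambda>0$ on $\mathds{1}^{\perp}$ over $B$ (valid along the whole segment precisely because $B$ is convex), you get a differential inequality and a global exponential rate in one stroke, with uniqueness of the constant curvature point on $H$ falling out as a byproduct of strict convexity rather than needing a separate lemma. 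What this buys is a cleaner, quantitative argument; what it costs is an essential reliance on the Hessian being positive semi-definite with kernel exactly $\mathds{R}\mathds{1}$ at \emph{every} $u\in\mathds{R}^N$ --- true in Thurston's setting where each single-triangle domain is all of $\mathds{R}^3_{>0}$, and, as your final paragraph correctly anticipates, exactly what fails for inversive distance packings, where $\Omega\subsetneq\mathds{R}^N_{>0}$, the extended curvature $\widetilde K$ is merely continuous, and the paper must retreat to the properness/subsequence scheme. Your one unproved ingredient, the rigidity lemma that $L$ is symmetric, positive semi-definite with kernel $\mathds{R}\mathds{1}$, is indeed the crux, as you say; it is the weighted-circle-packing analog of Lemma \ref{Lemma-Guo} and is proved in \cite{CL1} (going back to Thurston and Colin de Verdi\`ere \cite{Colindev}), so citing it is legitimate in a proof of a quoted theorem.
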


The following of this paper contains two parts. In the first part, we shall extend Chow-Luo's Theorem \ref{Thm-Chow-Luo} for circle packing metric to Bowers-Stephenson＊s inversive distance circle packing setting. This part occupies Section \ref{ICP-metric} and Section \ref{section-prove}.
In the second part, we shall state partial results regarding to the extension of Andreev-Thurston theorem to inversive distance setting in Section \ref{section-andreev-thurston}.

\section{Inversive distance circle packing metric}\label{ICP-metric}
We begin this section by briefly recalling the inversive distance in Euclidean geometry. See Bowers-Hurdal \cite{Bowers-Hurdal} and Bowers-Stephenson \cite{Bowers-Stephenson} for more detailed discussions. In the Euclidean plane, consider two circles $c_1$, $c_2$ with radii $r_1$, $r_2$ respectively, and assume that $c_1$ does not contain $c_2$ and vice versa. If the distance between their center is $l_{12}$, then the inversive distance between $c_1$, $c_2$ is given by the formula
\begin{equation*}
I(c_1, c_2)=\cfrac{l_{12}^2-r_1^2-r_2^2}{2r_1r_2}.
\end{equation*}
If one considers the Euclidean plane as appeared in the infinity of the hyperbolic 3-space $\mathds{H}^3$, then $c_1$ and $c_2$ are the boundaries of two totally geodesic hyperplanes $D_1$ and $D_2$. The inversive distance $I(c_1, c_2)$ is essentially the hyperbolic distance (or the intersection angle) between $D_1$ and $D_2$, which is invariant under the inversion and hence the name.

Note that $l_{12}> |r_1-r_2|$, we have $-1<I(c_1, c_2)< +\infty$. The inverse distance $I(c_1, c_2)$ describes the relative positions of $c_1$ and $c_2$. When $I(c_1, c_2)\in (-1, 0)$, the circles $c_1$, $c_2$ intersect with an intersection angle $\arccos I(c_1, c_2)\in (\frac{\pi}{2}, \pi)$. When $I(c_1, c_2)\in [0, 1]$, the circles $c_1$, $c_2$ intersect with an intersection angle $\arccos I(c_1, c_2)\in [0, \frac{\pi}{2}]$. When $I(c_1, c_2)\in (1, +\infty)$, the circles $c_1$, $c_2$ are separated.

We then reformulate Bowers and Stephenson's construction of an inversive distance circle packing with prescribed inversive distance $I$ on triangulated surface $(M, \mathcal{T})$. We consider $I$ as a function defined on all edges; that is $I: E\rightarrow (-1, +\infty)$, and we call $I$ the inversive distance. Let $I>c$ means $I_{ij}>c$ for each edge $\{ij\}\in E$, and $I\geq c$ means $I_{ij}\geq c$ for each edge $\{ij\}\in E$. Now fix $(M, \mathcal{T}, I)$ with $I>-1$. For every given radius vector $r\in \mathds{R}^N_{>0}$, we equip each edge $\{ij\}\in E$ with a length
\begin{equation}\label{Def-edge-length}
l_{ij}=\sqrt{r_i^2+r_j^2+2r_ir_jI_{ij}}.
\end{equation}
However, different with Thurston's construction of $l_{ij}^\Phi$ in formula (\ref{definition of length of edge}), for a face $\{ijk\}\in F$, the three lengths $\{l_{ij}, l_{jk}, l_{ik}\}$ may not satisfy the triangle inequalities any more. If for each $\{ijk\}\in F$, the three lengths $\{l_{ij}, l_{jk}, l_{ik}\}$ all satisfy the triangle inequalities, then the triangulated surface $(M, \mathcal{T}, I)$ with edge lengthes $l_{ij}$ forms an Euclidean polyhedral surface. In this case, the corresponding radius vector $r\in \mathds{R}^N_{>0}$ is called a inversive distance circle packing metric. In the following, we use $\Omega$ to represent the space of all possible inversive distance circle packing metrics, i.e.,
\begin{equation}
\Omega=\Big\{r\in \mathds{R}^N_{>0}\;\big|\;l_{ij}+l_{jk}>l_{ik},\;l_{ij}+l_{ik}>l_{jk},\;l_{ik}+l_{jk}>l_{ij}, \;\forall \;\{ijk\}\in F\Big\}.
\end{equation}
\begin{remark}
$\Omega$ is a simply connected open cone of $\mathds{R}^N_{>0}$. However, $\Omega$ is generally not convex and may be empty.
\end{remark}

We want to deform a inversive distance circle packing. It's enough to deform all the radii of circles in the packing, i.e., the metric $r$ in $\Omega$. According to Chow-Luo's idea, we deform the metric $r$ along the following inversive distance combinatorial Ricci flow
\begin{equation}\label{Def-flow}
\begin{cases}
{r_i}'(t)=-K_ir_i \\
\;\,r(0)\in \Omega
\end{cases}
\end{equation}
and its normalization
\begin{equation}\label{Def-normalized-flow}
\begin{cases}
{r_i}'(t)=(K_{av}-K_i)r_i \\
\;\,r(0)\in \Omega
\end{cases}.
\end{equation}

Chow and Luo's combinatorial Ricci flow in the inversive distance circle packing setting first appeared in \cite{Zhang} in a unified form with other types of discrete curvature flows. However, they didn't give further convergence properties for this flow. We shall study this flow carefully in this paper. Flow (\ref{Def-flow}) and the normalized flow (\ref{Def-normalized-flow}) differ only by a change of scale in space. Namely $r_i(t)$ is a solution to (\ref{Def-flow}) if and only if $e^{\frac{2\pi\chi(M)t}{N}}r_i(t)$ is a solution of (\ref{Def-normalized-flow}). Furthermore, if $r_i(t)$ is a solution to the normalized equation (\ref{Def-normalized-flow}), the product $\Pi_{i=1}^N r_i(t)$ is a constant. In the following, we mainly study the behavior of the normalized flow (\ref{Def-normalized-flow}). Set $\ln\Omega=\{x\in\mathds{R}^N|(e^{x_1},\cdots,e^{x_N})\in \Omega\}$. Using a coordinate change $u_i=\ln r_i$, we change flow (\ref{Def-normalized-flow}) to an autonomous ODE system
\begin{equation}\label{Def-normalized-flow-u}
\begin{cases}
{u_i}'(t)=K_{av}-K_i\\
\;\,u(0)\in \ln\Omega
\end{cases}.
\end{equation}

Note that, $K_i$ as a function of $u=(u_1,\cdots,u_N)^T$ is smooth and hence locally Lipschitz continuous. By Picard theorem in classical ODE theory, flow (\ref{Def-normalized-flow-u}) has a unique solution $r(t)$, $t\in[0, \epsilon)$ for some $\epsilon>0$.
\begin{proposition}\label{Prop-nonsingularity}
Given a triangulated surface $(M, \mathcal{T}, I)$ with inversive distance $I\geq0$. Let $r(t)$ be the solution to flow (\ref{Def-normalized-flow}), then for each $i\in V$, $r_i(t)$ can not go to zero or infinity in any finite time interval $[0, a)$ with $a<+\infty$.
\end{proposition}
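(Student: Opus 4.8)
The plan is to pass to the logarithmic coordinates $u_i=\ln r_i$, in which the normalized flow becomes the autonomous system (\ref{Def-normalized-flow-u}), namely ${u_i}'(t)=K_{av}-K_i$, and to show that its right-hand side stays uniformly bounded along the whole existence interval. Once this is established, $u_i(t)$ cannot escape to $\pm\infty$ in finite time, and since $r_i=e^{u_i}$ this is precisely the assertion that $r_i$ can neither collapse to $0$ nor blow up to $+\infty$ on a finite interval.

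The first step is to bound the curvature $K_i$ independently of the metric. By (\ref{classical Gauss curv}) we have $K_i=2\pi-\sum_{\{ijk\}\in F}\theta_i^{jk}$, a sum over the $d_i$ faces incident to the vertex $i$, where $d_i$ is the degree of $i$ in $\mathcal{T}$. Whenever $r\in\Omega$, each incident face $\{ijk\}$ is a genuine nondegenerate Euclidean triangle, so every inner angle satisfies $\theta_i^{jk}\in(0,\pi)$; summing the $d_i$ of them gives $0<\sum_{\{ijk\}}\theta_i^{jk}<d_i\pi$, hence
\begin{equation*}
2\pi-d_i\pi<K_i<2\pi .
\end{equation*}
Since $K_{av}=2\pi\chi(M)/N$ is a constant fixed by the topology and combinatorics alone, I obtain a uniform estimate
\begin{equation*}
|{u_i}'(t)|=|K_{av}-K_i|\le C,\qquad C:=|K_{av}|+\pi\max_{j}d_j ,
\end{equation*}
with $C$ depending only on $(M,\mathcal{T})$ and not on the particular solution. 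The essential point is that this bound does not deteriorate as $r$ approaches $\partial\Omega$: the inequality $\theta_i^{jk}\le\pi$ is uniform, so $K_i$ remains bounded even when a triangle is on the verge of degenerating.

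It then remains only to integrate. For any $t$ in an existence interval $[0,a)$ with $a<+\infty$,
\begin{equation*}
|u_i(t)-u_i(0)|\le\int_0^t|{u_i}'(s)|\,ds\le Ct\le Ca ,
\end{equation*}
so that $e^{u_i(0)-Ca}\le r_i(t)\le e^{u_i(0)+Ca}$, confining each $r_i(t)$ to a compact subinterval of $(0,+\infty)$. Consequently $r_i(t)$ can neither tend to $0$ nor to $+\infty$ on $[0,a)$.

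I do not expect a genuine obstacle here: the whole argument rests on the elementary fact that the angles of a Euclidean triangle lie in $(0,\pi)$, which makes $K_i$ automatically bounded, so the bound on ${u_i}'$ is immediate. The one point deserving care is the logical scope of the statement. It guarantees boundedness only on finite intervals on which the solution is defined and lies in $\Omega$; it does \emph{not} by itself show that the solution remains in $\Omega$ for all time. The solution may still exit $\Omega$ by degeneration of some triangle (a boundary point of $\Omega$ at which a triangle inequality fails), and that separate mechanism of finite-time singularity must be addressed elsewhere. Note also that the curvature bound above uses only nondegeneracy of the triangles, so the hypothesis $I\ge 0$ enters the surrounding development rather than this particular estimate.
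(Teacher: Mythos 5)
Your proof is correct and takes essentially the same approach as the paper, which likewise notes that $|K_{av}-K_i|$ is uniformly bounded by a constant depending only on the triangulation and integrates to obtain $r_i(0)e^{-ct}\le r_i(t)\le r_i(0)e^{ct}$. Your explicit angle bound $2\pi-d_i\pi<K_i<2\pi$ simply fills in the detail the paper leaves implicit, and your closing caveat (that this does not prevent the solution from exiting $\Omega$ by triangle degeneration) matches the paper's subsequent discussion leading to the extension theorem.
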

\begin{proof}
Note that, $|K_{av}-K_i|$ are uniformly bounded by a constant $c>0$, which depends only on the triangulation. Hence
$$r_i(0)e^{-ct}\leq r_i(t)\leq r_i(0)e^{ct},$$
which implies that $r_i(t)$ can not go to zero or infinity in finite time.\qed
\end{proof}

Suppose $\{r(t)|0\leq t<T\}$ is the unique solution to flow (\ref{Def-normalized-flow}) on a right maximal time interval $[0, T)$ with $0<T\leq +\infty$.
If $T<+\infty$, then $r(t)$ touches the boundary of $\Omega$ as $t\uparrow T$. By Proposition \ref{Prop-nonsingularity}, $c\leq r_i(t)\leq C$ for all $i\in V$ and $t\in[0,T)$, where $c$ and $C$ are positive constants. Hence all edges $l_{ij}(t)$ remain positive for $t\in[0,T)$, and there exists a sequence of time $t_n\uparrow T$ and a triangle $\{ijk\}\in F$, such that the triangle inequality in triangle $\{ijk\}\in F$ do not hold any more as $n\rightarrow +\infty$. However, we can always extend the solution $r(t)$ so that it exists for all time $t\in[0,+\infty)$. The basic idea is to extend the definition of curvature $K$ continuously to a generalized curvature $\widetilde{K}$, which is defined on for all $r\in \mathds{R}^N_{>0}$ (for details, see formula (\ref{def-K-tuta}) in section \ref{section-prove}). Even the triangle inequalities are not satisfied, $\widetilde{K}$ is still well defined and uniformly bounded by topological and combinatorial data. As to flow (\ref{Def-normalized-flow}), even if the triangular inequalities may not valid in some finite time, we can still deform the inversive distance metric along an extended flow until time tends to $+\infty$. We shall prove the following extension theorem in section \ref{section-prove}:

\begin{theorem}\label{Thm-main-1}
Given a triangulated surface $(M, \mathcal{T}, I)$ with inversive distance $I\geq 0$. Suppose $\{r(t)|t\in[0,T)\}$ is the unique maximal solution to flow (\ref{Def-normalized-flow}) with $0<T\leq +\infty$. Then we can always extend it to a solution $\{r(t)|t\in[0,+\infty)\}$ when $T<+\infty$. This is equivalent to say, the solution to the extended flow
\begin{equation}\label{Def-extended-flow-u}
\begin{cases}
{u_i}'(t)=K_{av}-\widetilde{K}_i\\
\;\,u(0)\in \ln\Omega
\end{cases}
\end{equation}
exists for all time $t\in[0,+\infty)$.
\end{theorem}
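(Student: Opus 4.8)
The plan is to realize the extended flow (\ref{Def-extended-flow-u}) as an autonomous ODE on all of $\mathds{R}^N$ (in the coordinate $u_i=\ln r_i$) whose right-hand side is continuous and \emph{globally bounded}, and then invoke the standard continuation theory for ODEs. First I would make precise the generalized curvature $\widetilde{K}$ promised in the introduction. For each face $\{ijk\}\in F$ with edge lengths $l_{ij},l_{jk},l_{ik}$ computed from (\ref{Def-edge-length}), define a generalized inner angle $\widetilde{\theta}_i^{jk}$ at $i$ by extending the law of cosines: set
$$\widetilde{\theta}_i^{jk}=\arccos\frac{l_{ij}^2+l_{ik}^2-l_{jk}^2}{2\,l_{ij}l_{ik}}$$
whenever the cosine argument lies in $[-1,1]$, and declare $\widetilde{\theta}_i^{jk}=0$ (resp.\ $\pi$) when the argument is $\geq 1$ (resp.\ $\leq -1$); then put $\widetilde{K}_i=2\pi-\sum_{\{ijk\}\in F}\widetilde{\theta}_i^{jk}$. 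On $\Omega$ the triangle inequalities hold, the cosine argument stays in $(-1,1)$, and $\widetilde{K}$ coincides with $K$, so the extended flow agrees with (\ref{Def-normalized-flow-u}) wherever the latter is defined.

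The first key step is to verify that $\widetilde{\theta}_i^{jk}$, and hence $\widetilde{K}_i$, is continuous on all of $\mathds{R}^N_{>0}$. Since $r\mapsto l_{ij}$ is continuous and positive (by Proposition \ref{Prop-nonsingularity} the radii stay in a compact subinterval of $(0,+\infty)$), the cosine argument is a continuous function of $r$; composing with the truncated $\arccos$, which is continuous on all of $\mathds{R}$ after the $0/\pi$ clipping, yields continuity of $\widetilde{\theta}_i^{jk}$. In particular the one-sided limits approaching $\partial\Omega$ from inside $\Omega$ agree with the clipped boundary values, so no jump occurs when a triangle degenerates. The second key step is the uniform bound: because every generalized angle lies in $[0,\pi]$, we immediately obtain
$$2\pi-\pi\,d_i\leq\widetilde{K}_i\leq 2\pi,$$
where $d_i$ is the number of faces incident to $i$; thus $|K_{av}-\widetilde{K}_i|\leq c$ for a constant $c$ depending only on the triangulation $\mathcal{T}$.

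With these two facts the conclusion follows from ODE continuation. In the coordinate $u_i=\ln r_i$ the extended flow reads $u_i'(t)=K_{av}-\widetilde{K}_i(u)$ with continuous, bounded right-hand side, so $|u_i'(t)|\leq c$ for all $t$. Consequently, on the maximal interval $[0,T)$ the estimate $|u(t)-u(0)|\leq cT$ keeps $u(t)$ inside a fixed compact ball; if $T<+\infty$ the limit $u(T)=\lim_{t\uparrow T}u(t)$ exists, since the velocity bound makes $u$ uniformly Cauchy. Applying Peano's existence theorem at the point $u(T)$ and concatenating produces a solution on a strictly larger interval, and since the bound $|u'|\leq c$ forbids escape to infinity on any finite interval, repeating this (or directly invoking the global continuation criterion for bounded continuous vector fields) yields a solution defined on all of $[0,+\infty)$. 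Pulling back through $r_i=e^{u_i}$ gives the desired extension of $r(t)$.

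I expect the main obstacle to be the continuity claim at the degenerate configurations, that is, showing rigorously that the clipped $\arccos$ produces no discontinuity as a triangle collapses and that the extended angles genuinely satisfy $\widetilde{\theta}_i^{jk}\in[0,\pi]$ in every degenerate case; this is where the sign structure of the cosine argument must be handled, checking that when one triangle inequality fails the angle opposite the long edge is forced to $\pi$ while the other two are forced to $0$. A secondary point to be careful about is that, because $\arccos$ is only continuous and not Lipschitz where its argument reaches $\pm 1$, the extended vector field need not be Lipschitz on $\partial\Omega$, so one can guarantee only \emph{existence} of the continuation via Peano's theorem, not uniqueness there; fortunately the theorem asserts merely existence of an extension, which is all that is required.
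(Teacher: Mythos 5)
Your proposal is correct and takes essentially the same route as the paper: the paper defines $\widetilde{K}$ via Luo's generalized Euclidean triangle angles, which (as the paper itself verifies in the proof of Lemma \ref{Lemma-limit-xita}) are given exactly by your clipped-arccos formula $\tilde{\theta}_i=\Lambda\big((l_{ik}^2+l_{ij}^2-l_{jk}^2)/(2l_{ik}l_{ij})\big)$, and then concludes global existence in one line from the uniform boundedness of $\widetilde{K}_i$ — precisely the bounded-continuous-vector-field continuation argument you spell out in detail. Your caveat that only existence (not uniqueness) of the continuation is guaranteed past $\partial\Omega$ is consistent with the paper, which likewise asserts only that an extension exists.
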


Next we see the convergence behavior. If the solution $u(t)$ to flow (\ref{Def-normalized-flow-u}) exists for all time $t\in[0, +\infty)$ and converges to a metric $u^*\in \ln\Omega$ (or more generally, the extended solution $u(t)$ with $t\in[0, +\infty)$ converges to $u^*$), then $u^*$ must be a critical point of the ODE system (\ref{Def-normalized-flow-u}). Thus $u^*$ is a zero point of equation $K_{av}-K=0$, and $u^*$ (or say the corresponding $r^*$) is a metric of constant curvature. As a consequence, the metric of constant curvature exists in $\ln\Omega$. Furthermore, in this case, flow (\ref{Def-normalized-flow}) can be used to deform the metric $r$ to a metric of constant curvature. This shows that the existence of constant curvature metric is a necessary condition for the convergence of solution $r(t)$. However, this condition is essentially enough. We will prove

\begin{theorem}\label{Thm-main-2}
Given a triangulated surface $(M, \mathcal{T}, I)$ with inversive distance $I\geq 0$. Assuming there exists a metric of constant curvature $r^*\in\Omega$. Then $r(t)$ can always be extended to a solution that converges exponentially fast to a metric of constant curvature as $t\rightarrow+\infty$. That is, the solution to the extended flow
(\ref{Def-extended-flow-u}) converges exponentially fast to a metric of constant curvature as $t\rightarrow+\infty$.
\end{theorem}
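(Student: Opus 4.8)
The plan is to recognize this as a gradient-flow convergence problem and to exploit the variational structure that governs inversive distance circle packings. First I would identify a convex (or locally convex) energy functional $\mathcal{E}(u)$ on $\ln\Omega$ whose gradient is the curvature map, so that the normalized flow $u_i'=K_{av}-\widetilde K_i$ becomes, up to the linear drift $K_{av}$, the negative gradient flow of a strictly convex function. The key analytic input is the symmetry and sign-definiteness of the Jacobian: by the discrete Schläfli-type identity, the matrix $\partial K_i/\partial u_j$ is symmetric, and on the admissible region it is positive semi-definite with one-dimensional kernel spanned by $(1,\dots,1)^T$ (reflecting the scaling invariance). This yields a locally strictly convex potential $\mathcal{E}$ modulo scaling. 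Because the normalized flow preserves $\prod_i r_i(t)$, equivalently $\sum_i u_i(t)$, we restrict to the hyperplane $\Sigma=\{\sum_i u_i=\text{const}\}$, on which $\mathcal{E}$ is strictly convex near $u^*$.

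Next I would argue convergence in two stages. By Theorem \ref{Thm-main-1}, the extended solution exists for all $t\in[0,+\infty)$, so compactness of the orbit is the issue. I would use that $\mathcal{E}$ (or the strictly convex functional built from $\widetilde K$) is monotone decreasing along the flow and bounded below, forcing $K_{av}-\widetilde K\to 0$ along a subsequence. The existence of the constant-curvature metric $r^*\in\Omega$ is exactly the hypothesis that guarantees $u^*$ is a critical point lying in the interior admissible region. The crucial step is to promote subsequential convergence to genuine convergence: here I would invoke strict convexity of $\mathcal{E}$ on $\Sigma$ together with the fact that $u^*$ is the unique minimizer on $\Sigma$, so the descending trajectory cannot escape and must converge to $u^*$. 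A standard Lyapunov argument using $\mathcal{E}(u(t))-\mathcal{E}(u^*)$ as the Lyapunov function completes this.

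For the exponential rate I would linearize the extended flow at the equilibrium $u^*$. Since $u^*\in\ln\Omega$ lies in the open admissible region, the generalized curvature $\widetilde K$ coincides with $K$ near $u^*$ and is smooth there, so the linearization is governed by the negative of the Jacobian $-\partial(K_{av}-K_i)/\partial u_j=\partial K_i/\partial u_j=:L$. Restricted to the tangent space $T_{u^*}\Sigma$ (the orthogonal complement of the kernel $(1,\dots,1)^T$), the matrix $L$ is symmetric positive definite, hence all its eigenvalues are bounded below by some $\lambda_0>0$. Standard ODE stability theory (or a direct Gronwall estimate on $\|u(t)-u^*\|^2$ after the solution enters a neighborhood of $u^*$) then gives $\|u(t)-u^*\|\le Ce^{-\lambda_0 t}$, i.e.\ exponential convergence.

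The main obstacle I anticipate is precisely the transition from existence-for-all-time to convergence, because the extended flow is only guaranteed to be $C^1$ (not smooth) at the boundary $\partial\Omega$ where triangle inequalities degenerate, and a priori the trajectory could oscillate near $\partial\Omega$ without settling. The resolution must show that the constant-curvature hypothesis forces the orbit eventually into the interior region $\ln\Omega$: one expects that the convex potential $\widetilde{\mathcal{E}}$ extending $\mathcal{E}$ attains its minimum at the interior point $u^*$, so the decreasing Lyapunov functional drives $u(t)$ back into $\Omega$ and keeps it there for large $t$. Verifying this attracting behaviour of the interior equilibrium against the non-smooth boundary extension is where the real work lies; once the orbit is trapped in a compact neighborhood of $u^*$ inside $\ln\Omega$, smoothness is restored and the exponential estimate follows routinely.
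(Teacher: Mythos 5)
Your strategy is in fact the paper's strategy: extend the Ricci potential to a convex $C^1$ function on all of $\mathds{R}^N$ via Luo's extension lemma (Lemma \ref{lemma-luo-essential}), run a Lyapunov/negative-gradient argument to extract a subsequential limit with $\widetilde{K}=K_{av}\mathds{1}$, and obtain the exponential rate by linearizing at the interior equilibrium, where $\widetilde{K}=K$ is smooth and the Jacobian $L=\partial(K_1,\dots,K_N)/\partial(u_1,\dots,u_N)$ is positive definite transverse to $\mathds{1}$; this last step matches the paper's proof of Theorem \ref{thm-section3-mainthm} essentially verbatim. So the question is whether your plan fills in the one step you yourself flag as ``where the real work lies,'' and it does not.

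The gap is concrete: you propose to promote subsequential convergence to convergence using ``strict convexity of $\mathcal{E}$ on $\Sigma$'' with ``$u^*$ the unique minimizer,'' but the extended potential $\widetilde{F}$ is \emph{not} strictly convex on the hyperplane. Outside $\ln\Omega$ the extended angles are locally constant in degenerate configurations ($\tilde{\theta}_i=\pi$, $\tilde{\theta}_j=\tilde{\theta}_k=0$ persist under small perturbations), so $\widetilde{F}$ has flat directions there; strict convexity holds only locally in $\ln\Omega\cap\mathscr{U}$, where the Hessian has rank $N-1$ (Proposition \ref{prop-HessF-tuta-smooth}). Consequently both the uniqueness of the critical point in the extended space and the compactness of the orbit require a separate argument, which is exactly what the paper supplies: for each direction $\xi$ in the hyperplane, $\varphi_\xi(t)=\widetilde{F}(u_{av}+t\xi)$ is convex on $\mathds{R}$ and strictly convex on a small interval $[-\delta,\delta]$ because $u_{av}$ is an \emph{interior} point; since $\varphi_\xi'(0)=0$ and $\varphi_\xi'$ is nondecreasing, $\varphi_\xi'(t)\geq\varphi_\xi'(\delta)>0$ for $t\geq\delta$, so $\varphi_\xi$ grows at least linearly along every ray, and Lemma \ref{lemma-gotoinfinity} upgrades this radial divergence to genuine properness of $\widetilde{F}$ on $\mathscr{U}$ (Proposition \ref{Prop-F-tura}). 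Properness gives the orbit compactness and the lower bound on $\widetilde{F}$ that you assert without proof, and the same derivative analysis yields uniqueness of the constant curvature metric in all of $\mathds{R}^N_{>0}$ (Corollary \ref{corollary-unique-K-average}), which your identification of the subsequential limit with $u^*$ silently assumes --- a priori the limit could be a spurious extended critical point outside $\ln\Omega$, precisely the oscillation-near-$\partial\Omega$ scenario you worry about. With properness and extended uniqueness in hand, the limit point is $u_{av}\in\ln\Omega$, asymptotic stability traps the orbit in the smooth region, and your linearization/Gronwall conclusion is then correct; without them, the plan does not go through as stated. (One further small point: your restriction to the hyperplane $\Sigma$ uses $\sum_i u_i'(t)=0$, which for the \emph{extended} flow requires the extended Gauss--Bonnet formula (\ref{Gauss-Bonnet-extend}), not just the classical one.)
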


It is remarkable that the extension phenomenon is still true for Luo's combinatorial Yamabe flow \cite{Luo0}, see \cite{Ge-Jiang1} for details. We shall prove Theorem \ref{Thm-main-1} and Theorem \ref{Thm-main-2} in the following section.

\section{Deform the metric to constant curvature}\label{section-prove}
In the following, if we write $(M, \mathcal{T}, \Phi)$, we mean a triangulated surface with weight $\Phi\in[0,\frac{\pi}{2}]$ setting, and we consider circle packing metric $r\in \mathds{R}^N_{>0}$. If we write $(M, \mathcal{T}, I)$, we mean a triangulated surface with inversive distance $I\geq 0$ setting, and we consider inversive distance circle packing metric $r\in \Omega$. We first see what happens in a single triangle settings. Assuming a triangle $\triangle123$ is configured by three circles with three fixed non-negative numbers $I_{12}$, $I_{23}$ and $I_{13}$ as inversive distances. For any $(r_1, r_2, r_3)^T\in \mathds{R}^3_{>0}$, three edge lengths $l_{12}$, $l_{23}$ and $l_{13}$ are defined by formula (\ref{Def-edge-length}). Set
\begin{equation}\label{Def-delta}
\Delta=\Big\{(r_1, r_2, r_3)^T\in \mathds{R}^3_{>0}\;\big|\;l_{12}+l_{23}>l_{13},\;l_{12}+l_{13}>l_{23},\;l_{13}+l_{23}>l_{12}\Big\}.
\end{equation}
It was shown that $\Delta$ is a simply connected open cone-like subset of $\mathds{R}^3_{>0}$, and for more analysis of the shape of $\Delta$, see
\cite{Guoren,Luo1}. $\Delta$ is never empty. In fact, assuming $0\leq I_{12}\leq I_{13}\leq I_{23}$, and let $r_1=r_2=1$, $r_3=r$ with $r>0$ satisfying equation $r^2+2rI_{13}=1+2I_{12}$, then $(1,1,r)^T\in\Delta$. For each $(r_1, r_2, r_3)^T\in\Delta$, there corresponds an Euclidean triangle $\triangle 123$, with three edge lengths $l_{12}$, $l_{23}$ and $l_{13}$. Denote $\{ij\}$ as the edge that has length $l_{ij}$, for each $i, j\in \{1, 2, 3\}$ and $i\neq j$. Denote
$1$, $2$ and $3$ as three vertices that face the edge $\{23\}$, $\{13\}$ and $\{12\}$ respectively. Moreover, let $\theta_i$ be the inner angle at vertex $i$ for each $i\in\{1, 2, 3\}$. Thus we get the angle map $\theta=(\theta_1,\theta_2,\theta_3)^T: \Delta\rightarrow \mathds{R}^3_{>0}$.

\begin{lemma}\label{Lemma-Guo}(Guo \cite{Guoren})
For each $(r_1, r_2, r_3)^T\in\Delta$, let $u_i=\ln r_i$ for each $i\in\{1,2,3\}$. The Jacobian matrix $\frac{\partial(\theta_1, \theta_2, \theta_3)}{\partial(u_1, u_2, u_3)}$ is symmetric and semi-negative definite. It has one zero eigenvalue with associated eigenvector $(1,1,1)^T$ and two negative eigenvalues.\qed
\end{lemma}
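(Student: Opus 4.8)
The plan is to handle the three assertions—the zero eigenvalue, the symmetry, and the strict negativity of the two remaining eigenvalues—separately, reducing everything to the chain rule through the edge lengths. Write $L=\frac{\partial(\theta_1,\theta_2,\theta_3)}{\partial(u_1,u_2,u_3)}$ and $\mathbf 1=(1,1,1)^T$. The zero eigenvalue is the cheapest step: replacing $r$ by $\lambda r$, i.e. $u$ by $u+(\ln\lambda)\mathbf 1$, rescales the whole triangle by $\lambda$ and leaves every inner angle unchanged, so $\theta(u+(\ln\lambda)\mathbf 1)=\theta(u)$; differentiating at $\lambda=1$ gives $L\mathbf 1=0$. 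Thus $(1,1,1)^T$ spans a one-dimensional kernel, and it remains to prove that $L$ is symmetric and negative definite on $\mathbf 1^{\perp}$.

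For the symmetry I would factor $L=\frac{\partial\theta}{\partial l}\cdot\frac{\partial l}{\partial u}$, where $l=(l_{23},l_{13},l_{12})$. The law of cosines yields the classical formulas $\partial\theta_i/\partial l_i=l_i/(2A)$ and $\partial\theta_i/\partial l_j=-l_i\cos\theta_k/(2A)$ (with $k$ the third index and $A$ the area), which repackage as $\partial\theta_i/\partial\log l_j=\frac{l_il_j}{2A}S_{ij}$, where $S$ is the symmetric matrix with $1$ on the diagonal and $-\cos\theta_k$ off it. The length side is read off from (\ref{Def-edge-length}): $\partial l_{ij}/\partial u_i=r_i(r_i+r_jI_{ij})/l_{ij}$, while $l_{ij}$ is independent of $u_k$ for the third vertex $k$. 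Substituting these into $\partial\theta_i/\partial u_j$ and $\partial\theta_j/\partial u_i$ and simplifying, the two expressions coincide after using $l_{ij}^2=r_i^2+r_j^2+2r_ir_jI_{ij}$ together with the projection identity $l_i=l_j\cos\theta_k+l_k\cos\theta_j$; this cancellation is a direct, if slightly tedious, computation, and it gives $L=L^T$.

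The negative definiteness on $\mathbf 1^{\perp}$ is the heart of the matter and is genuinely delicate, because—unlike the intersecting case $I\in[0,1]$, where every off-diagonal entry $\partial\theta_i/\partial u_j$ is positive and $-L$ is literally a weighted graph Laplacian—for general $I\ge 0$ the off-diagonal entries can turn negative (this already occurs at $r=(1,1,1)$ with $I_{12}=2$, $I_{13}=I_{23}=0$). So the Laplacian shortcut is unavailable and the inequality must be proved honestly. Since $L$ is symmetric with $L\mathbf 1=0$, its nonzero eigenvalues satisfy $\lambda_2+\lambda_3=\operatorname{tr}L$ and $\lambda_2\lambda_3=$ (the sum of the three principal $2\times2$ minors of $L$); for a symmetric zero-row-sum matrix these minors are all equal, so positivity of one of them, say $L_{11}L_{22}-L_{12}^{2}>0$, gives $\lambda_2\lambda_3>0$. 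It therefore suffices to prove $\operatorname{tr}L<0$ and $L_{11}L_{22}-L_{12}^{2}>0$, which together force $\lambda_2,\lambda_3<0$. I would get $L_{ii}<0$ from the monotonicity that raising $r_i$ lengthens both edges at vertex $i$ while fixing the opposite edge, and the minor positivity from a Euclidean computation exploiting $A>0$ and $I\ge 0$.

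The main obstacle is exactly this last positivity: it is where the hypothesis $I\ge 0$ is indispensable and where a naive extension to $I>-1$ would break down. A cleaner way to globalize it is a continuity argument. On the connected set $\Delta$, $L$ is symmetric with constant kernel $\mathbf 1$, so the signature of $L|_{\mathbf 1^{\perp}}$ is locally constant and can jump only where a second zero eigenvalue appears, i.e. where $\operatorname{rank}L<2$. At the symmetric reference metric $r_1=r_2=r_3$ with all $I_{ij}$ equal, the off-diagonals are positive and $L|_{\mathbf 1^{\perp}}$ is manifestly negative definite; hence one only needs $\operatorname{rank}L=2$ throughout $\Delta$. Using the factorization $L=\frac{\partial\theta}{\partial\log l}\cdot\frac{\partial\log l}{\partial u}$, where $\frac{\partial\theta}{\partial\log l}=\frac{1}{2A}\Lambda S\Lambda$ with $\Lambda=\operatorname{diag}(l_i)$ has rank exactly $2$ and kernel $\mathbf 1$ (because $S(l_{23},l_{13},l_{12})^T=0$ by the projection identity), the rank condition reduces to the invertibility of $\frac{\partial\log l}{\partial u}$ on $\Delta$. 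Whichever route one takes, everything funnels into the same non-degeneracy inequality, and checking it over all of $\Delta$ under $I\ge 0$ is the step I expect to demand the most care.
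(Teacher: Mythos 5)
Your kernel and symmetry steps are sound: scaling invariance does give $L\mathds{1}=0$; the derivative formulas $\partial\theta_i/\partial l_i=l_i/(2A)$, $\partial\theta_i/\partial l_j=-l_i\cos\theta_k/(2A)$ and $\partial l_{ij}/\partial u_i=r_i(r_i+r_jI_{ij})/l_{ij}$ are all correct; and the cancellation you defer as ``tedious'' really does close (the difference $L_{12}-L_{21}$ reduces, after clearing denominators, to $l_{12}^2(l_{23}^2-l_{13}^2)-(l_{23}^2-l_{13}^2)l_{12}^2=0$). Your linear-algebra reduction is also correct (the three principal $2\times 2$ minors of a symmetric zero-row-sum matrix are indeed equal, so $\operatorname{tr}L<0$ plus one positive minor forces $\lambda_2,\lambda_3<0$), and your counterexample showing the off-diagonals can be negative is right, so you correctly rule out the Laplacian shortcut. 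The genuine gap is exactly where you flag it: neither $\operatorname{tr}L<0$ nor the minor positivity is ever proved, and that is the \emph{entire} analytic content of the lemma --- note that the paper itself gives no proof but cites Lemmas 5 and 6 of Guo, whose proof consists precisely of carrying out these sign computations under $I\geq 0$. Worse, your proposed shortcut for the diagonal is unsound as stated: from your own formulas, $\partial\theta_i/\partial u_i=-\frac{l_{jk}r_i}{2A}\bigl[\frac{(r_i+r_kI_{ik})\cos\theta_k}{l_{ik}}+\frac{(r_i+r_jI_{ij})\cos\theta_j}{l_{ij}}\bigr]$, and one of $\theta_j,\theta_k$ may be obtuse; ``raising $r_i$ lengthens both adjacent edges'' does not formally force $\theta_i$ to decrease (lengthening a single adjacent edge can \emph{increase} $\theta_i$), so $L_{ii}<0$ needs the same $I\geq 0$ computation you are trying to avoid.

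Your continuity route is actually the more promising one, but as written it has two holes, one of which is fatal to the argument as stated: the reference configuration ``$r_1=r_2=r_3$ with all $I_{ij}$ equal'' generally does \emph{not} lie in $\Delta$, because the inversive distances are fixed data; to use it you must run the signature argument in the larger space of pairs $(r,I)$ with $I\geq 0$ and the triangle inequalities, and the connectivity of that space itself needs an argument (fixable, e.g., via the paper's explicit point of the form $(1,1,r)\in\Delta$, which provides a continuous section over $I$-space). Ironically, the non-degeneracy you single out as the hardest step is the easy part of your scheme: writing $b_{ij}=\partial\log l_{ij}/\partial u_i=r_i(r_i+r_jI_{ij})/l_{ij}^2$, which is strictly positive whenever $I\geq 0$, the matrix $\partial(\log l_{23},\log l_{13},\log l_{12})/\partial(u_1,u_2,u_3)$ has determinant $b_{12}b_{23}b_{31}+b_{13}b_{32}b_{21}>0$, so it is invertible on all of $\Delta$; combined with your correct observation that $\frac{1}{2A}\Lambda S\Lambda$ has rank $2$ with kernel $\mathds{R}\mathds{1}$ (since $Sl=0$ is the projection identity), this gives $\operatorname{rank}L=2$ throughout $\Delta$ in one line. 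With that observation and the connectivity repair, route 2 closes into a complete proof that is genuinely different from Guo's direct computation; but as submitted, route 1 defers the decisive inequalities entirely and route 2 is anchored at a point outside the admissible family, so the proposal is a correct skeleton rather than a proof.
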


For a proof of Lemma \ref{Lemma-Guo}, see Lemma 5 and Lemma 6 in Guo \cite{Guoren}. Now we see the notion of generalized Euclidean triangles and their angles.
\begin{definition}(\cite{Bobenko,Luo1,Luo2})
A generalized Euclidean triangle $\triangle$ is a (topological) triangle of vertices $v_1, v_2, v_3$ so that each edge is assigned a positive number, called edge
length. Let $x_i$ be the assigned length of the edge $v_jv_k$ where $\{i, j, k\}$=$\{1, 2, 3\}$. The inner angle $\tilde{\theta}_i$=$\tilde{\theta}_i(x_1, x_2, x_3)$ at the vertex $v_i$ is defined as follows. If $x_1, x_2, x_3$ satisfy the triangle inequalities that $x_j+x_k>x_h$ for $\{h, j, k\}$=$\{1, 2, 3\}$, then $\tilde{\theta}_i$ is the inner angle of the Euclidean triangle of edge lengths $x_1, x_2, x_3$ opposite to the edge of length $x_i$; if $x_i\geq x_j+x_k$, then $\tilde{\theta}_i=\pi$, $\tilde{\theta}_j=\tilde{\theta}_k=0$.
\end{definition}
Thus the angle map $\theta: \Delta\rightarrow \mathds{R}^3_{>0}$ is extended to $\tilde{\theta}: \mathds{R}^3_{>0}\rightarrow \mathds{R}^3_{\geq0}$. It is known that
\begin{lemma}(Luo \cite{Luo1})
The angle function $\tilde{\theta}: \mathds{R}^3_{>0}\rightarrow \mathds{R}^3_{>0}$, $(r_1,r_2,r_3)\mapsto(\tilde{\theta}_1,\tilde{\theta}_2,\tilde{\theta}_3)$ is continuous so that $\tilde{\theta}_1+\tilde{\theta}_2+\tilde{\theta}_3=\pi$.\qed
\end{lemma}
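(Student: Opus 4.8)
The plan is to prove the two assertions — continuity of $\tilde{\theta}$ and the identity $\tilde{\theta}_1+\tilde{\theta}_2+\tilde{\theta}_3=\pi$ — by decomposing the domain according to whether the triangle inequalities hold, and then gluing the pieces. First I would observe that each $l_{ij}=\sqrt{r_i^2+r_j^2+2r_ir_jI_{ij}}$ is a smooth, strictly positive function of $r\in\mathds{R}^3_{>0}$ (since $I_{ij}\geq 0$), so it suffices to establish continuity of $\tilde{\theta}$ as a function of the edge lengths $(x_1,x_2,x_3)$, where $x_1=l_{23}$, $x_2=l_{13}$, $x_3=l_{12}$; the claim in terms of $r$ then follows by composition.

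Next I would partition $\mathds{R}^3_{>0}$ in the edge-length coordinates. Let $U$ be the open set on which all three strict triangle inequalities $x_j+x_k>x_i$ hold, and let $D_i=\{x_i\geq x_j+x_k\}$ for each $i$. A short computation shows these degenerate sets are mutually exclusive: if, say, $x_1\geq x_2+x_3$ and $x_2\geq x_1+x_3$ both held, adding them would force $x_3\leq 0$, contradicting $x_3>0$; in particular the interfaces $\{x_i=x_j+x_k\}$ are pairwise disjoint. Thus $\overline{U}, D_1, D_2, D_3$ form a finite closed cover of $\mathds{R}^3_{>0}$, with each $D_i$ meeting $\overline{U}$ only along the single interface $\{x_i=x_j+x_k\}$, and the assignment of $\tilde{\theta}$ is unambiguous.

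On $U$ the angle $\tilde{\theta}_i$ is the genuine Euclidean inner angle $\tilde{\theta}_i=\arccos\frac{x_j^2+x_k^2-x_i^2}{2x_jx_k}$, which is smooth, and the identity $\sum_i\tilde{\theta}_i=\pi$ is the classical angle sum. The same $\arccos$ formula extends continuously to $\overline{U}$ because there the three cosine arguments remain in $[-1,1]$ and $\arccos$ is continuous on that interval. On the interior of each $D_i$ the function is the constant value with $\tilde{\theta}_i=\pi$ and $\tilde{\theta}_j=\tilde{\theta}_k=0$, which is trivially continuous and sums to $\pi$. The crux is to check that the two prescriptions agree on the shared interface $\{x_i=x_j+x_k\}$: there the cosine argument at vertex $i$ equals $\frac{x_j^2+x_k^2-(x_j+x_k)^2}{2x_jx_k}=-1$, giving $\tilde{\theta}_i\to\pi$, while the arguments at vertices $j,k$ equal $+1$, giving $\tilde{\theta}_j,\tilde{\theta}_k\to 0$ — exactly the assigned degenerate triple. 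Hence the definitions match on all overlaps, and the pasting lemma for finite closed covers yields continuity of $\tilde{\theta}$ on $\mathds{R}^3_{>0}$; the angle-sum identity, holding separately on $\overline{U}$ and on each $D_i$, then holds everywhere.

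The main obstacle I anticipate is precisely the verification at the degenerate interface: one must confirm both that at most one triangle inequality can fail, so that the definition is well posed, and that the limit of the genuine Euclidean angles as one approaches $\{x_i=x_j+x_k\}$ from inside $U$ reproduces exactly the degenerate values $(\pi,0,0)$. Everything else reduces to the classical Euclidean angle sum and the continuity of $\arccos$ on $[-1,1]$.
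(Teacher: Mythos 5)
Your proof is correct, and it is sound on the two points where care is needed: the degenerate regions $D_i$ are pairwise disjoint in $\mathds{R}^3_{>0}$, the interfaces $\{x_i=x_j+x_k\}$ are exactly where the $\arccos$ argument at vertex $i$ hits $-1$ and the arguments at $j,k$ hit $+1$, so the piecewise prescriptions agree on all overlaps and the pasting lemma applies; reducing to edge-length coordinates via the smooth positive functions $l_{ij}(r)$ is also legitimate since $I\geq 0$. Your route differs in packaging from the paper's: the paper states this lemma without proof (citing Luo), but the device it actually deploys, in the proof of Lemma \ref{Lemma-limit-xita}, is to collapse your entire case analysis into the single closed formula
\begin{equation*}
\tilde{\theta}_i(r_i,r_j,r_k)=\Lambda\!\left(\frac{l_{ik}^2+l_{ij}^2-l_{jk}^2}{2l_{ik}l_{ij}}\right),
\end{equation*}
where $\Lambda$ is the continuous truncation of $\arccos$ ($\Lambda\equiv\pi$ on $(-\infty,-1]$, $\arccos$ on $[-1,1]$, $\Lambda\equiv 0$ on $[1,+\infty)$); the three regimes of $\Lambda$ correspond exactly to your $U$, your $D_i$ (where $\tilde{\theta}_i=\pi$), and the complementary degeneration $x_i\leq|x_j-x_k|$ (where $\tilde{\theta}_i=0$). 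The one-formula approach gets continuity in a single stroke from continuity of $\Lambda$, dispenses with the pasting lemma and the disjointness check, and hands the paper a formula it reuses directly for the boundary-limit computations of Lemma \ref{Lemma-limit-xita}; your decomposition, by contrast, makes the geometry of the degeneration explicit and verifies the angle sum region by region (on $\overline{U}$ by continuity from the classical angle sum, on each $D_i$ as $\pi+0+0$), whereas the $\Lambda$ route needs the small observation that when one argument is $\leq -1$ the other two are $\geq 1$. The crux computation — that the cosine arguments equal $\mp 1$ precisely on the interfaces — is identical in both arguments, so the proofs are equivalent in content, just organized differently.
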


\begin{lemma}\label{lemma-guoren-F-123}(Guo \cite{Guoren})
The smooth differential $1$-form $\omega=\theta_1 du_1+\theta_2 du_2+\theta_3 du_3$ is closed in the open subset $\ln\Delta\subset\mathds{R}^3$. For arbitrary chosen $u_0\in\ln\Delta$, the integral
\begin{equation}
F_{123}(u)\triangleq\int_{u_0}^u \theta_1 du_1+\theta_2 du_2+\theta_3 du_3, \,\,\,u\in\ln\Delta
\end{equation}
is a well defined locally concave function in $\ln\Delta$ and is strictly locally concave in $\ln\Delta\cap\{u\in \mathds{R}^3|u_1+u_2+u_3=0\}$.
\qed
\end{lemma}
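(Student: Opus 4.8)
The plan is to derive everything from the Jacobian computation already recorded in Lemma \ref{Lemma-Guo}. First I would observe that the $1$-form $\omega=\theta_1 du_1+\theta_2 du_2+\theta_3 du_3$ is closed precisely when
$$\frac{\partial\theta_i}{\partial u_j}=\frac{\partial\theta_j}{\partial u_i}$$
for all $i,j\in\{1,2,3\}$, that is, exactly when the Jacobian matrix $\partial(\theta_1,\theta_2,\theta_3)/\partial(u_1,u_2,u_3)$ is symmetric. This symmetry is the first assertion of Lemma \ref{Lemma-Guo}, so $d\omega=0$ on $\ln\Delta$ comes for free.

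For well-definedness of the integral I would invoke simple connectivity. Since $\Delta$ is a simply connected open subset of $\mathds{R}^3_{>0}$ and the coordinate change $u_i=\ln r_i$ is a diffeomorphism of $\mathds{R}^3_{>0}$ onto $\mathds{R}^3$, the image $\ln\Delta$ is again open and simply connected. By the Poincar\'e lemma, equivalently by path-independence of line integrals of closed forms on simply connected domains, the value of $F_{123}(u)=\int_{u_0}^u\omega$ does not depend on the chosen path from $u_0$ to $u$. Hence $F_{123}$ is a well-defined smooth function on $\ln\Delta$ with gradient $\nabla F_{123}=(\theta_1,\theta_2,\theta_3)^T$.

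Concavity then reduces to the Hessian. Differentiating once more gives
$$\frac{\partial^2 F_{123}}{\partial u_i\partial u_j}=\frac{\partial\theta_i}{\partial u_j},$$
so the Hessian of $F_{123}$ coincides with the Jacobian matrix of the angle map. By Lemma \ref{Lemma-Guo} this matrix is symmetric and semi-negative definite, which is precisely local concavity of $F_{123}$ throughout $\ln\Delta$. For the strict statement I would use the full eigenvalue description: the kernel of the Hessian is spanned by $(1,1,1)^T$, and the two remaining eigenvalues are strictly negative. Because the Hessian is symmetric, the eigenvectors of those two negative eigenvalues are orthogonal to $(1,1,1)^T$ and therefore span the plane $\{u_1+u_2+u_3=0\}$. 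Consequently the restriction of the Hessian to this plane is negative definite, and $F_{123}$ is strictly locally concave on $\ln\Delta\cap\{u\in\mathds{R}^3\mid u_1+u_2+u_3=0\}$.

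The main point requiring care is precisely this last step: essentially all of the analytic work has already been absorbed into Lemma \ref{Lemma-Guo}, so the only genuine obstacle is to identify the degenerate direction $(1,1,1)^T$ of the Hessian with the normal direction of the slicing plane, so that restricting to $\{u_1+u_2+u_3=0\}$ annihilates the single zero eigenvalue and recovers strict negative definiteness. One should also confirm that simple connectivity of $\ln\Delta$ is genuinely available, but this is immediate from the cited fact that $\Delta$ is simply connected together with the diffeomorphism property of the logarithm.
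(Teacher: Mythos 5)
Your proposal is correct and takes essentially the same approach as the paper: the paper offers no independent proof of this lemma, citing Guo \cite{Guoren}, and the intended derivation is exactly yours — symmetry of the Jacobian in Lemma \ref{Lemma-Guo} gives closedness of $\omega$, simple connectivity of $\ln\Delta$ gives path-independence of the integral, and semi-negative definiteness with kernel $\{t(1,1,1)^T\,|\,t\in\mathds{R}\}$ gives local concavity together with strict local concavity on the slice $\{u_1+u_2+u_3=0\}$, whose normal direction is precisely the degenerate eigenvector. Indeed, the paper itself runs this very argument at the level of the whole triangulation in Proposition \ref{prop-HessF-tuta-smooth} and again in the injectivity step of Theorem \ref{thm-angle-range-one-triangle}, so your identification of the zero eigendirection with the normal of the slicing plane is exactly the point the paper relies on.
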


Consider the whole triangulation $(M,\mathcal{T})$ with inversive distance $I\geq 0$. Remember that all vertices in $V$ are ordered one by one as $1,\cdots,N$, when we mention a triangle $\{ijk\}\in F$, we always think $i$, $j$, $k$ are naturally ordered, i.e., $i<j<k$ implicitly. For every vertex $i\in V$, the function $\tilde{\theta}_i=\tilde{\theta}_i(u)$, $u\in\mathds{R}^N$ is the continuous extension of function $\theta_i=\theta_i(u)$, $u\in\ln \Omega$. Recall the definition of discrete Gaussian curvature $K_i=2\pi-\sum_{\{ijk\} \in F}\theta_i^{jk}$, which is originally defined on $\Omega$, now can naturally extends continuously to
\begin{equation}\label{def-K-tuta}
\widetilde{K}_i=2\pi-\sum_{\{ijk\} \in F}\tilde{\theta}_i^{jk}.
\end{equation}
Hence the curvature map $K(r):\Omega\rightarrow \mathds{R}^N$ is extended continuously to
$\widetilde{K}(r):\mathds{R}^N_{>0}\rightarrow \mathds{R}^N$ with discrete Gauss-Bonnet formula remain valid.
\begin{proposition} For the extended curvature $\widetilde{K}$, the following discrete Gauss-Bonnet formula holds.
\begin{equation}\label{Gauss-Bonnet-extend}
\sum_{i=1}^N\widetilde{K}_i=2\pi \chi(M).
\end{equation}
\end{proposition}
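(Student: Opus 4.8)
The plan is to prove the extended Gauss-Bonnet formula by reducing the global identity to a purely local, combinatorial counting argument. The key structural fact is the last lemma quoted above: for a single generalized Euclidean triangle the three extended angles still sum to $\pi$, i.e. $\tilde{\theta}_1+\tilde{\theta}_2+\tilde{\theta}_3=\pi$, regardless of whether the triangle inequalities hold. This is precisely what makes the extension possible, and it is the only analytic input needed; everything else is bookkeeping over the triangulation.

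First I would sum the definition $\widetilde{K}_i=2\pi-\sum_{\{ijk\}\in F}\tilde{\theta}_i^{jk}$ over all vertices $i\in V$, obtaining
\begin{equation*}
\sum_{i=1}^N\widetilde{K}_i=2\pi N-\sum_{i=1}^N\sum_{\{ijk\}\in F}\tilde{\theta}_i^{jk}.
\end{equation*}
The next step is to interchange the order of summation in the double sum: rather than summing over vertices and then over incident faces, I would sum over all faces $f=\{ijk\}\in F$ and then over the three vertices of $f$. Since each face contributes exactly its three inner angles $\tilde{\theta}_i^{jk}+\tilde{\theta}_j^{ik}+\tilde{\theta}_k^{ij}$, and by the quoted lemma this equals $\pi$ for every face (this is the one place where the extended angles, not the classical ones, are used), the double sum collapses to $\pi F^\sharp$, where $F^\sharp$ is the number of faces. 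Hence $\sum_i\widetilde{K}_i=2\pi N-\pi F^\sharp=2\pi V^\sharp-\pi F^\sharp$.

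It remains to identify $2\pi V^\sharp-\pi F^\sharp$ with $2\pi\chi(M)=2\pi(V^\sharp-E^\sharp+F^\sharp)$. For a closed triangulated surface every face is a triangle bounded by three edges, and every edge is shared by exactly two faces, which gives the standard relation $3F^\sharp=2E^\sharp$. Substituting $E^\sharp=\tfrac{3}{2}F^\sharp$ into the Euler characteristic yields $2\pi\chi(M)=2\pi V^\sharp-3\pi F^\sharp+2\pi F^\sharp=2\pi V^\sharp-\pi F^\sharp$, which matches the expression obtained above, completing the proof.

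I do not expect any genuine obstacle here: the argument is essentially the same combinatorial computation that establishes the classical formula \eqref{Gauss-Bonnet}, and indeed the cleanest way to present it is to observe that nothing in that classical derivation used the triangle inequalities beyond the fact that the three angles of each face sum to $\pi$. The only subtlety worth flagging is that in the classical setting this angle-sum identity is automatic for a genuine Euclidean triangle, whereas in the generalized setting it is a theorem (the quoted lemma), so the proof should cite that lemma explicitly at the point where the face-wise sum is replaced by $\pi$.
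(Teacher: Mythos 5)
Your proposal is correct and is essentially identical to the paper's own proof: both sum the definition of $\widetilde{K}_i$ over vertices, regroup the double sum face by face, invoke Luo's lemma that $\tilde{\theta}_1+\tilde{\theta}_2+\tilde{\theta}_3=\pi$ for generalized triangles, and finish with $2|E|=3|F|$ and Euler's formula. Your explicit remark that the angle-sum identity is the only analytic input (a theorem here rather than a triviality) is exactly the point the paper's proof relies on as well.
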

\begin{proof}
For every generalized Euclidean triangle $\{ijk\}\in F$, $\tilde{\theta}_i^{jk}+\tilde{\theta}_j^{ik}+\tilde{\theta}_k^{ij}=\pi$. Then
\begin{equation*}
\sum_{i=1}^N\widetilde{K}_i=2\pi N-\sum_{i=1}^N\sum_{\{ijk\}\in F}\tilde{\theta}_i^{jk}
=2\pi N-\sum_{\{ijk\}\in F}\left(\tilde{\theta}_i^{jk}+\tilde{\theta}_j^{ik}+\tilde{\theta}_k^{ij}\right)
=2\pi N-\pi|F|=2\pi\chi(M).
\end{equation*}
The last equality holds, because $2|E|=3|F|$ and $\chi(M)=N-|E|+|F|$ for any triangulation of a closed surface.\qed
\end{proof}
\begin{remark}
For a triangulated surface with boundary, the curvature at a interior vertex is defined as before, while the curvature at a boundary vertex $i$ is defined as
$$\widetilde{K}_i=\pi-\sum_{\{ijk\} \in F}\tilde{\theta}_i^{jk}.$$
In this setting, the generalized Gauss-Bonnet formula remains valid. One can prove it by doubling the surface along the boundary.
\end{remark}

In the proof of Theorem \ref{Thm-Chow-Luo}, Chow-Luo studied an energy functional, which is also called ``discrete Ricci potential"
$$F(u)\triangleq\int_{u_0}^u \sum_{i=1}^N(K_i-K_{av})du_i.$$
This type of functional was first constructed by Colin de Verdi$\grave{e}$re \cite{Colindev}. $F(u)$ is well defined, since $\sum_{i=1}^N(K_i-K_{av})du_i$ is a closed smooth $1$-form and the domain where $F(u)$ is defined is simply connected. In $(M,\mathcal{T},\Phi)$ setting, this potential $F(u)$ is well defined on the whole space $u\in\mathds{R}^N$ and is convex. Furthermore, $F$ is proper if there exists a constant curvature metric. Chow-Luo's normalized flow (\ref{Def-ChowLuo's normalized flow}) can be written as $\dot{u}=-\nabla F$. This implies that $F(u(t))$ is decreasing along the flow. Assuming the existence of constant curvature metric $u_{av}$, then $u_{av}$ is the unique critical point of $F(u)$. Thus all gradient lines converge to the constant curvature metric $u_{av}$. In $(M,\mathcal{T},I)$, the inversive distance setting, $F(u)$ is well defined only on $u\in \ln\Omega$, which is not proper anymore. Although flow (\ref{Def-extended-flow-u}) is the negative gradient flow of $F(u)$, we can't derive the convergence behavior directly. A natural idea is to extend the definition of $F(u)$ with the definition domain $u\in\ln\Omega$ extended to the whole space $u\in\mathds{R}^N$, meanwhile, $F$ is still convex and proper (under the condition that there exists a constant curvature metric). Fortunately, By Feng Luo's pioneered work in \cite{Luo1}, this idea works well.

A differential $1$-form $\omega=\sum_{i=1}^na_i(x)dx_i$ in an open set $U\subset\mathds{R}^n$ is said to be continuous if each $a_i(x)$ is a continuous function on $U$. A continuous $1$-form $\omega$ is called closed if $\int_{\partial\tau}\omega=0$ for any Euclidean triangle $\tau\subset U$. By the standard approximation theory, if $\omega$ is closed and $\gamma$ is a piecewise $C^1$-smooth null homologous loop in $U$, then $\int_{\gamma}\omega=0$. If $U$ is simply connected, then in the integral
$G(x)=\int_{a}^x\omega$ is well defined (where $a\in U$ is arbitrary chose), independent of the choice of piecewise smooth paths in $U$ from $a$ to $x$. Moreover, the function $G(x)$ is $C^1$-smooth so that $\frac{\partial G(x)}{\partial x_i}=a_i(x)$.

\begin{lemma}\label{lemma-luo-essential}(Luo \cite{Luo1})
Assuming a triangle $\triangle123$ is configured by three circles with three fixed non-negative numbers $I_{12}$, $I_{23}$ and $I_{13}$ as inversive distances. The smooth $1$-form $\omega=\theta_1 du_1+\theta_2 du_2+\theta_3 du_3$ can be extended to a continuous closed $1$-form $\widetilde{\omega}=\tilde{\theta}_1 du_1+\tilde{\theta}_2 du_2+\tilde{\theta}_3 du_3$ on $\mathds{R}^3$ so that the integration
\begin{equation}\label{def-F-tuta-123}
\widetilde{F}_{123}(u)\triangleq\int_{u_0}^u \tilde{\theta}_1 du_1+\tilde{\theta}_2 du_2+\tilde{\theta}_3 du_3, \,\,\,u\in\mathds{R}^3
\end{equation}
is a $C^1$-smooth concave function.\qed
\end{lemma}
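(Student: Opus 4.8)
The plan is to build the extended form $\widetilde\omega$ and its potential $\widetilde F_{123}$ by gluing the smooth theory already available on $\ln\Delta$ to the explicit constant-angle behaviour on the degenerate locus, and then to upgrade these piecewise statements to global ones using the continuity of $\tilde\theta$ and the approximation principle for continuous closed $1$-forms recalled just before the lemma. First I would record the coarse structure of $\mathds{R}^3$ in the $u$-coordinates. Writing $x_1,x_2,x_3$ for the three edge lengths as (real-analytic, positive) functions of $u=(u_1,u_2,u_3)$ through $r_i=e^{u_i}$ and formula (\ref{Def-edge-length}), the space $\mathds{R}^3$ is the union of $\ln\Delta$ (where all three triangle inequalities hold strictly) together with three disjoint open ``degenerate'' regions $W_i=\{x_i>x_j+x_k\}$, separated by the walls $\{x_i=x_j+x_k\}$. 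Each wall is the zero set of a real-analytic function, hence a codimension-one real-analytic set whose complement is open and dense. By the definition of the generalized angles, on each $W_i$ one has $(\tilde\theta_i,\tilde\theta_j,\tilde\theta_k)=(\pi,0,0)$, so there $\widetilde\omega$ is the exact form $\pi\,du_i$; on $\ln\Delta$ we have $\widetilde\omega=\omega$, which is closed and equals $dF_{123}$ by Lemma \ref{lemma-guoren-F-123}. The cited continuity of $\tilde\theta$ on $\mathds{R}^3_{>0}$ shows that $\widetilde\omega$ is a continuous $1$-form on all of $\mathds{R}^3$.

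Next I would establish closedness of $\widetilde\omega$, that is $\int_{\partial\tau}\widetilde\omega=0$ for every Euclidean triangle $\tau$. The form is already closed on each open piece $\ln\Delta, W_1, W_2, W_3$, and the union of the walls is a finite union of real-analytic hypersurfaces, a closed measure-zero set with open dense complement. Concretely I would intersect the walls with the plane of $\tau$, cutting $\tau$ into finitely many pieces on each of which $\widetilde\omega$ is smooth and closed; applying Green's theorem on each piece gives zero, and the contributions along interior wall-segments cancel in pairs because they are traversed twice in opposite directions and $\widetilde\omega$ is continuous, leaving $\int_{\partial\tau}\widetilde\omega=0$. Non-transverse configurations are handled by approximating $\tau$ with nearby triangles and passing to the limit using continuity of $\widetilde\omega$. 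Since $\mathds{R}^3$ is simply connected, $\widetilde F_{123}(u)=\int_{u_0}^u\widetilde\omega$ is then well defined, path-independent, and $C^1$ with $\partial\widetilde F_{123}/\partial u_i=\tilde\theta_i$; on $\ln\Delta$ it agrees with $F_{123}$ up to an additive constant.

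Finally I would prove concavity of the $C^1$ function $\widetilde F_{123}$ through monotonicity of its gradient $\nabla\widetilde F_{123}=(\tilde\theta_1,\tilde\theta_2,\tilde\theta_3)$. For $u,v\in\mathds{R}^3$ I would write
$$\big(\nabla\widetilde F_{123}(v)-\nabla\widetilde F_{123}(u)\big)\cdot(v-u)=\int_0^1 (v-u)^T\,\frac{\partial\tilde\theta}{\partial u}\big(u+t(v-u)\big)\,(v-u)\,dt,$$
where the integrand is defined for all but finitely many $t$, since a segment meets each real-analytic wall either in a finite set or lies inside it. On $\ln\Delta$ the Jacobian $\partial\theta/\partial u$ is symmetric and negative semi-definite by Lemma \ref{Lemma-Guo}, while on each $W_i$ it vanishes; hence the integrand is $\le 0$ for almost every $t$, so the left-hand side is $\le 0$. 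Segments lying inside a wall are handled by approximating with nearby segments and using continuity of $\nabla\widetilde F_{123}$. This monotonicity of the gradient is equivalent to concavity of the $C^1$ function $\widetilde F_{123}$, which completes the proof.

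The step I expect to be the main obstacle is the gluing across the degeneracy walls: both the closedness of $\widetilde\omega$ and the concavity of $\widetilde F_{123}$ reduce to passing from the interior, piecewise statements to global ones across a lower-dimensional singular set. The interior pieces are handled cleanly by Guo's Jacobian computation, so the genuine work lies in controlling the continuous-but-only-piecewise-smooth form along the analytic walls; the transversality and finiteness-of-crossings arguments, together with the continuity of $\tilde\theta$, are precisely what make this rigorous.
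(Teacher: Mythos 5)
The paper does not actually prove this lemma: it is quoted from Luo \cite{Luo1} and the \qed\ stands in for a citation, so the comparison here is with Luo's argument, which your proposal reconstructs essentially faithfully. The decomposition of $\mathds{R}^3$ into $\ln\Delta$ and the three disjoint degenerate regions $W_i$ (on whose closures $(\tilde{\theta}_i,\tilde{\theta}_j,\tilde{\theta}_k)\equiv(\pi,0,0)$, so $\widetilde{\omega}=\pi\,du_i$ there), the use of the continuity of $\tilde{\theta}$, the verification of closedness by cutting a test triangle along the real-analytic walls with two-sided cancellation along interior arcs, and the derivation of concavity from monotonicity of the gradient along segments with Guo's Jacobian lemma supplying negative semidefiniteness on $\ln\Delta$ --- this is exactly the structure of the proof in the cited source, and your version is correct.

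Two refinements are worth making explicit. First, in the closedness step your transversality bookkeeping is doing genuine work, not routine care: ``continuous on $\mathds{R}^3$, smooth and closed off a closed null set'' is \emph{not} by itself enough to conclude closedness (in the plane, $\omega=c(x)\,dy$ with $c$ the Cantor function is continuous and closed off a null set but not closed), so the argument really does rest on the exceptional set being finitely many analytic walls that a generic triangle crosses in finitely many arcs, with cancellation coming from the continuity of $\widetilde{\omega}$ across each wall. Second, your displayed integral identity for $\bigl(\nabla\widetilde{F}_{123}(v)-\nabla\widetilde{F}_{123}(u)\bigr)\cdot(v-u)$ needs a word of justification, since $\partial\tilde{\theta}/\partial u$ may blow up as the segment approaches a wall from inside $\ln\Delta$ (angle derivatives degenerate with the triangle). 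The cleaner route, which avoids any integrability question, is to set $g(t)=\nabla\widetilde{F}_{123}\bigl(u+t(v-u)\bigr)\cdot(v-u)$ and note that $g$ is continuous on $[0,1]$, is $C^1$ with $g'\le 0$ on each of the finitely many open subintervals between wall crossings, and is constant on any subinterval lying in a wall (since $\tilde{\theta}$ is constant on $\{x_i\ge x_j+x_k\}$); hence $g$ is non-increasing on $[0,1]$, which is precisely the gradient monotonicity equivalent to concavity of the $C^1$ function $\widetilde{F}_{123}$. With these two points made precise, your proof is complete and coincides with the one the paper delegates to \cite{Luo1}.
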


Lemma \ref{lemma-luo-essential} plays an essential role in \cite{Luo1} and \cite{Luo2}. We use this lemma to extend the definition of discrete Ricci potential $F(u)$ and derive some very useful properties of $F$. Consider the whole triangulation $(M,\mathcal{T})$ with inversive distance $I\geq 0$. Recall that $\tilde{\theta}_i=\tilde{\theta}_i(u)$, $u\in\mathds{R}^N$ is the continuous extension of function $\theta_i=\theta_i(u)$, $u\in\ln \Omega$. It's easy to see, $\tilde{\theta}_i du_i+\tilde{\theta}_j du_j+\tilde{\theta}_kdu_k$ is a continuous closed $1$-form  on $\mathds{R}^N$, hence for arbitrary chosen $u_0\in\mathds{R}^N$, the following integration
\begin{equation}
\widetilde{F}_{ijk}(u)\triangleq\int_{u_{0}}^{u}\tilde{\theta}_i du_i+\tilde{\theta}_j du_j+\tilde{\theta}_k du_k, \,\,\,u\in\mathds{R}^N
\end{equation}
is well defined and is a $C^1$-smooth concave function on $\mathds{R}^N$. Note that
\begin{equation*}
\begin{aligned}
\sum_{i=1}^N(\widetilde{K}_i-K_{av})du_i=&\sum_{i=1}^N\Big(2\pi-K_{av}-\sum_{\{ijk\}\in F}\tilde{\theta}_i^{jk}\Big)du_i\\
=&\sum_{i=1}^N(2\pi-K_{av})du_i-\sum_{i=1}^N\sum_{\{ijk\}\in F}\tilde{\theta}_i^{jk}du_i\\
=&\sum_{i=1}^N(2\pi-K_{av})du_i-\sum_{\{ijk\}\in F}\Big(\tilde{\theta}_i^{jk}du_i+\tilde{\theta}_j^{ik}du_j+\tilde{\theta}_k^{ij}du_k\Big),
\end{aligned}
\end{equation*}
which shows that $\sum_{i=1}^N(\widetilde{K}_i-K_{av})du_i$ is a continuous closed $1$-form on $\mathds{R}^N$. Therefore, we may well introduce the extended discrete Ricci potential
\begin{equation}
\widetilde{F}(u)\triangleq\int_{u_0}^u \sum_{i=1}^N(\widetilde{K}_i-K_{av})du_i, \,\,\, u\in \mathds{R}^N.
\end{equation}

\begin{proposition}
$\widetilde{F}(u)\in C^1(\mathds{R}^N)$ and is convex on the whole space $\mathds{R}^N$.
\end{proposition}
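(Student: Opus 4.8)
The plan is to handle the two assertions separately and to reduce each to a fact already in place. For the $C^1$ regularity, I would simply invoke the standard theory recalled just before the statement: it has already been verified that the integrand $\sum_{i=1}^N(\widetilde{K}_i-K_{av})\,du_i$ is a continuous closed $1$-form on $\mathds{R}^N$, and $\mathds{R}^N$ is simply connected. Hence the integral defining $\widetilde{F}$ does not depend on the piecewise-smooth path from $u_0$ to $u$, so $\widetilde{F}$ is a well-defined function, and by the quoted theory it is $C^1$ with $\partial\widetilde{F}/\partial u_i=\widetilde{K}_i-K_{av}$. This settles $\widetilde{F}\in C^1(\mathds{R}^N)$ with no further work.

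For convexity I would integrate the decomposition of the $1$-form displayed immediately above the proposition. Integrating each piece from $u_0$ to $u$ (legitimate because each is itself a continuous closed $1$-form on the simply connected $\mathds{R}^N$) gives, up to an additive constant,
$$\widetilde{F}(u)=\sum_{i=1}^N(2\pi-K_{av})\,u_i-\sum_{\{ijk\}\in F}\widetilde{F}_{ijk}(u),$$
where $\widetilde{F}_{ijk}$ is the per-triangle integral introduced just above. The first summand is affine, hence convex, so it suffices to prove that $-\sum_{\{ijk\}\in F}\widetilde{F}_{ijk}$ is convex.

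At this point I would appeal to Lemma \ref{lemma-luo-essential}: each $\widetilde{F}_{ijk}$, viewed as a function on $\mathds{R}^N$ that depends only on the three coordinates $u_i,u_j,u_k$, is $C^1$-smooth and concave. A function that is concave in three of the variables and constant in the rest is concave on all of $\mathds{R}^N$ (merely degenerate in the remaining directions), so each $-\widetilde{F}_{ijk}$ is convex. A finite sum of convex functions is convex, and adding the affine term preserves convexity; therefore $\widetilde{F}$ is convex on $\mathds{R}^N$.

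I expect no serious obstacle, since the two cited lemmas do all the analytic work. What remains is only the bookkeeping in the decomposition — in particular keeping the signs straight, so that the concavity of the triangle functionals $\widetilde{F}_{ijk}$ becomes convexity of the global $\widetilde{F}$. The one point worth double-checking is that the passage from the single-triangle concavity and closedness results to the global statement on $\mathds{R}^N$ is valid; but this is precisely the content of the extension to $\widetilde{F}_{ijk}$ already recorded just before the proposition, so it may be cited rather than re-proved.
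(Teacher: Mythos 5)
Your proposal is correct and follows essentially the same route as the paper: the paper's own proof writes $\widetilde{F}(u)=C(u)-\sum_{\{ijk\}\in F}\widetilde{F}_{ijk}(u)$ with $C(u)=(2\pi-K_{av})\sum_{i=1}^N(u_i-u_{0,i})$ linear and each $\widetilde{F}_{ijk}$ concave and $C^1$-smooth via Lemma \ref{lemma-luo-essential}, exactly your decomposition. Your added remarks --- deriving $C^1$ regularity from the continuous closed $1$-form theory on the simply connected $\mathds{R}^N$, and noting that concavity in the three coordinates $u_i,u_j,u_k$ lifts to concavity on all of $\mathds{R}^N$ --- merely make explicit steps the paper leaves implicit.
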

\begin{proof}
Set $u_0=(u_{0,1},\cdots,u_{0,N})^T$. It's easy to see
\begin{equation*}
\widetilde{F}(u)=C(u)-\sum_{\{ijk\}\in F}\widetilde{F}_{ijk}(u),
\end{equation*}
where $C(u)=(2\pi-K_{av})\sum_{i=1}^N(u_i-u_{0,i})$ is a linear function of $u$, while all $\widetilde{F}_{ijk}(u)$ are concave and $C^1$-smooth. Hence $\widetilde{F}(u)$ is convex. \qed
\end{proof}

\begin{proposition}\label{prop-HessF-tuta-smooth}
In $\ln\Omega$, $\widetilde{F}(u)$ is $C^{\infty}$-smooth. $Hess_u\widetilde{F}$ is positive semi-definite with rank $N-1$ and null space $\{t\mathds{1}|t\in\mathds{R}\}$.
\end{proposition}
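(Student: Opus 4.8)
The plan is to reduce the whole statement to the single–triangle analysis of Guo's Lemma \ref{Lemma-Guo}, exploiting the face decomposition $\widetilde{F}(u)=C(u)-\sum_{\{ijk\}\in F}\widetilde{F}_{ijk}(u)$ already established above. First I would settle smoothness. On $\ln\Omega$ every face satisfies the triangle inequalities, so the extended angles $\tilde{\theta}_i^{jk}$ agree with the genuine Euclidean inner angles $\theta_i^{jk}$; by the law of cosines these are real–analytic (in particular $C^{\infty}$) functions of $(r_i,r_j,r_k)$, hence of $u$ through $r_i=e^{u_i}$. Consequently $\widetilde{K}_i=K_i$ and $\partial\widetilde{F}/\partial u_i=\widetilde{K}_i-K_{av}$ is $C^{\infty}$ on $\ln\Omega$; since $\widetilde{F}$ is already $C^1$ and its gradient is smooth there, $\widetilde{F}$ is itself $C^{\infty}$ on $\ln\Omega$.

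Next I would compute the Hessian. Because $\partial\widetilde{F}/\partial u_i=K_i-K_{av}$ on $\ln\Omega$, we have $Hess_u\widetilde{F}=\partial(K_1,\dots,K_N)/\partial(u_1,\dots,u_N)$. Writing $K_i=2\pi-\sum_{\{ijk\}\in F}\theta_i^{jk}$ and grouping derivatives face by face, the global Hessian assembles from the local angle Jacobians:
\[
Hess_u\widetilde{F}=\sum_{f=\{ijk\}\in F}P_f^{\,T}L_f\,P_f,
\]
where $P_f:\mathds{R}^N\to\mathds{R}^3$ is the coordinate projection onto the three vertices of $f$ and $L_f=-\,\partial(\theta_i,\theta_j,\theta_k)/\partial(u_i,u_j,u_k)$. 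By Lemma \ref{Lemma-Guo} each $L_f$ is symmetric and positive semi-definite, of rank $2$, with one–dimensional kernel spanned by $(1,1,1)^T$; this is also consistent with the concavity of each $\widetilde{F}_{ijk}$ recorded earlier.

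Positive semi-definiteness is then immediate: for any $x\in\mathds{R}^N$,
\[
x^T\!\left(Hess_u\widetilde{F}\right)x=\sum_{f\in F}(P_f x)^T L_f (P_f x)\ \geq\ 0,
\]
each summand being nonnegative. For the rank and null space I would use that a symmetric positive semi-definite matrix has the same kernel as its quadratic form, so $x\in\ker\big(Hess_u\widetilde{F}\big)$ iff $(P_f x)^T L_f (P_f x)=0$ for every face $f$, i.e. iff $P_f x\in\mathrm{span}\{(1,1,1)^T\}$, equivalently $x_i=x_j=x_k$ on each face $\{ijk\}$. Every edge of the closed triangulated surface lies in a face, so adjacent vertices carry equal values of $x$; since the $1$–skeleton of $(M,\mathcal{T})$ is connected, $x$ is constant, i.e. $x=t\mathds{1}$. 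Conversely $P_f\mathds{1}=(1,1,1)^T\in\ker L_f$ for every $f$, so $\mathds{1}$ lies in the kernel. Hence the null space is exactly $\{t\mathds{1}\,|\,t\in\mathds{R}\}$ and the rank is $N-1$.

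The main obstacle I anticipate is the bookkeeping in the face decomposition—verifying that the global Hessian genuinely splits as $\sum_f P_f^{\,T}L_f P_f$ with the correct indices and signs—together with the connectivity step that upgrades ``constant on each triangle'' to ``globally constant.'' Once these are in place, everything else follows mechanically from Guo's lemma and the elementary fact that a symmetric positive semi-definite matrix and its associated quadratic form share the same kernel.
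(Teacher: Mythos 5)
Your proposal is correct and follows essentially the same route as the paper: smoothness via the coincidence of $\widetilde{K}$ with the smooth $K$ on $\ln\Omega$, the face-by-face decomposition of the Hessian into zero-padded copies of Guo's single-triangle matrices $L_{ijk}$ (your $P_f^{\,T}L_fP_f$ is exactly the paper's extended $N\times N$ matrix $L_{ijk}$), and the kernel argument combining the positive semi-definite quadratic-form fact with connectivity of the triangulation. The only cosmetic difference is that the paper deduces $C^{\infty}$-smoothness by identifying $\widetilde{F}$ with the line integral $F$ of a smooth closed $1$-form on $\ln\Omega$, whereas you upgrade from $C^1$ via smoothness of the gradient; both are valid.
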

\begin{proof}
$\ln\Omega$ is a simply connected open set. $\frac{\partial K_i}{\partial u_j}=\frac{\partial K_j}{\partial u_i}$. Hence $\omega=\sum_{i=1}^N(K_i-K_{av})du_i$ is a closed $C^{\infty}$-smooth differential $1$-form on $\ln\Omega$. For arbitrary chosen $\bar{u}\in \ln\Omega$, the line integral
\begin{equation*}
F(u)\triangleq\int_{\bar{u}}^u \sum_{i=1}^N(K_i-K_{av})du_i,
\end{equation*}
is well defined, independent of the choice of piecewise smooth paths in $\ln\Omega$ from $\bar{u}$ to $u$. Moreover, for each $u\in \ln\Omega$,
$\widetilde{F}(u)=F(u)+c$ with $c=\int_{u_0}^{\bar{u}} \sum_{i=1}^N(K_i-K_{av})du_i$ a constant. By the definition of $F$, $F$ is $C^{\infty}$-smooth. Hence in $\ln\Omega$, $\widetilde{F}(u)$ is $C^{\infty}$-smooth with $Hess_u\widetilde{F}=Hess_uF$. Denote $L=Hess_uF=\frac{\partial(K_1, \cdots, K_N)}{\partial(u_1, \cdots, u_N)}$. By Lemma \ref{Lemma-Guo}, in a single triangle $\{ijk\}\in F$, the matrix
$$L_{ijk}\triangleq-\frac{\partial(\theta_i, \theta_j, \theta_k)}{\partial(u_i, u_j, u_k)}$$
is positive semi-definite with rank $2$ and null space $\{t(1,1,1)^T|t\in\mathds{R}\}$. We extend the matrix $L_{ijk}$ to a $N\times N$ matrix. Remember that all vertices are ordered and marked by $1,\cdots,N$. We suppose $i$, $j$, $k$ arise at $i$, $j$, $k$ position respectively in the ordered sequence $1,\cdots,N$. Then we get a $N\times N$ matrix by putting $(\Lambda_{ijk})_{st}=-\frac{\partial \theta_s}{\partial u_t}$ at the $(s,t)$-entry position for any $s,t\in \{i,j,k\}$, and putting $0$ at other entries. Without confusion, we may still write the extended $N\times N$ matrix as $L_{ijk}$,
then we have $$L=\frac{\partial(K_{1},\cdots,K_{N})}{\partial(u_{1},\cdots,u_{N})}=\sum_{\{i,j,k\}\in F}L_{ijk}.$$
Because each component in the sum is positive semi-definite matrixes, $L$ is positive semi-definite .

If $Lx=0$, then $x^TLx=x^T(\sum L_{ijk})x=\sum (x^T L_{ijk}x)=0$. Hence $x^TL_{ijk}x=0$ and further $L_{ijk}x=0$ by elementary linear algebra theory, i.e., $x\in Ker(L_{ijk})$ for each $\{ijk\}\in F$.
Hence there is a constant $t_{ijk}$ so that $(x_{i},x_{j},x_{k})=t_{ijk}(1,1,1)$. Because the manifold $M$ is connected, all $t_{ijk}$ must be equal. Thus $x=t\mathds{1}$, which implies $Ker(L)=\{t\mathds{1}|t\in \mathds{R}\}$ and $rank(L)=N-1$.
 \qed
\end{proof}

\begin{proposition}\label{Prop-F-tura}
Assuming there exists a metric $u_{av}$ of constant curvature, which is unique in $\ln \Omega$ up to scaling by Theorem \ref{Thm-Guoren-Luofeng}. Denote $\mathscr{U}\triangleq \{u\in \mathds{R}^N|\sum_{i=1}^Nu_i=\sum_{i=1}^Nu_{av,i}\}$. Then $\widetilde{F}(u)$ is proper on $\mathscr{U}$ and
$\lim\limits_{u\in \mathscr{U}, u\rightarrow\infty}\widetilde{F}(u)=+\infty.$
\end{proposition}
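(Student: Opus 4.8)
The plan is to exploit the global convexity of $\widetilde{F}$ on $\mathds{R}^N$ (established in the preceding proposition) together with the genuine strict convexity that is available at the interior point $u_{av}$. First I would record two facts about $u_{av}$. It lies in $\mathscr{U}$ trivially, and it is a critical point of $\widetilde{F}$: since $u_{av}\in\ln\Omega$ we have $\widetilde{K}(u_{av})=K(u_{av})=K_{av}\mathds{1}$, so every component $\widetilde{K}_i-K_{av}$ of $\nabla\widetilde{F}(u_{av})$ vanishes. Because $\widetilde{F}$ is convex, $u_{av}$ is thus a global minimum of the restriction $\widetilde{F}|_{\mathscr{U}}$. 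The direction space of the affine hyperplane $\mathscr{U}$ is $\{v\in\mathds{R}^N\mid\sum_i v_i=0\}=\mathds{1}^{\perp}$, which meets the null direction $\mathds{1}$ of the Hessian only at the origin.

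The core of the argument is a one-dimensional recession estimate along rays. Fix a nonzero $v\in\mathds{1}^{\perp}$ and set $g(t)=\widetilde{F}(u_{av}+tv)$. Global convexity of $\widetilde{F}$ makes $g$ convex, so $g'$ is nondecreasing, and $g'(0)=\langle\nabla\widetilde{F}(u_{av}),v\rangle=0$. Because $u_{av}$ lies in the \emph{open} set $\ln\Omega$, Proposition \ref{prop-HessF-tuta-smooth} applies in a neighborhood of $u_{av}$, so $g''(0)=v^{T}\,\mathrm{Hess}_{u_{av}}\widetilde{F}\,v>0$, the strict inequality holding precisely because $v$ is not a multiple of $\mathds{1}$. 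Hence $g'(t_0)>0$ for some small $t_0>0$, and by monotonicity $g'(t)\ge g'(t_0)>0$ for all $t\ge t_0$; integrating yields $g(t)\ge g(t_0)+g'(t_0)(t-t_0)$, so $\widetilde{F}$ grows at least linearly along every ray of $\mathscr{U}$ issuing from $u_{av}$. Equivalently, the asymptotic slope satisfies $\widetilde{F}^{\infty}(v)=\lim_{t\to\infty}g'(t)\ge g'(t_0)>0$ for every $v\in\mathds{1}^{\perp}\setminus\{0\}$.

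Finally I would upgrade this per-direction growth to genuine properness of $\widetilde{F}|_{\mathscr{U}}$. Viewing $\widetilde{F}|_{\mathscr{U}}$ as a continuous (hence closed), bounded-below convex function on the $(N-1)$-dimensional space $\mathscr{U}\cong\mathds{R}^{N-1}$, I would invoke the standard recession-cone criterion: all sublevel sets of such a function are bounded if and only if its asymptotic slope is strictly positive in every nonzero direction. The previous paragraph supplies exactly this hypothesis over $\mathds{1}^{\perp}$, so the sublevel sets $\{u\in\mathscr{U}\mid\widetilde{F}(u)\le c\}$ are compact and $\lim_{u\in\mathscr{U},\,u\to\infty}\widetilde{F}(u)=+\infty$. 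The step I expect to demand the most care is establishing the strict positivity $\widetilde{F}^{\infty}(v)>0$ uniformly: mere convexity would permit a horizontal asymptote of the $\sqrt{1+t^2}-t$ type, and it is only the coupling of the \emph{local} strict convexity at the interior minimizer $u_{av}\in\ln\Omega$ with the \emph{global} convexity of $\widetilde{F}$ on all of $\mathds{R}^N$ that forces the initial positive slope to persist for all time; the uniformity across directions is then delivered by the recession-cone equivalence (equivalently, by lower semicontinuity and positive homogeneity of $\widetilde{F}^{\infty}$ on the compact unit sphere of $\mathds{1}^{\perp}$).
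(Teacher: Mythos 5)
Your proposal is correct, and its core coincides with the paper's argument: both proofs restrict $\widetilde{F}$ to rays $t\mapsto\widetilde{F}(u_{av}+t\xi)$ with $\xi$ in the direction space $\mathds{1}^{\perp}$ of $\mathscr{U}$, use $\nabla\widetilde{F}(u_{av})=\widetilde{K}(u_{av})-K_{av}\mathds{1}=0$, and exploit that $u_{av}$ lies in the open set $\ln\Omega$ where $\mathrm{Hess}\,\widetilde{F}$ is smooth with null space exactly $\{t\mathds{1}\}$ (Proposition \ref{prop-HessF-tuta-smooth}) to get strict convexity transverse to $\mathds{1}$, hence a positive slope $\varphi'_{\xi}(\delta)>0$ that global convexity propagates into linear growth along every ray --- precisely your answer to the $\sqrt{1+t^2}-t$ worry. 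Where you genuinely diverge is the globalization step. The paper proves and invokes the elementary Lemma \ref{lemma-gotoinfinity}: a continuous function that increases monotonically to $+\infty$ along every ray from a fixed point tends to $+\infty$ at infinity, established by a bare-hands compactness argument on the unit sphere (extract a convergent subsequence of directions $\overset{\circ}{x_k}\to x^*$, use monotonicity along rays near $x^*$ to contradict boundedness). You instead cite the recession-function criterion of convex analysis: for a closed convex function, all sublevel sets are bounded iff $\widetilde{F}^{\infty}(v)>0$ for every $v\neq 0$, which your per-ray estimate supplies. Both are valid. Your route is shorter if one grants the convex-analysis theorem, and it makes transparent why no uniformity over directions needs to be checked by hand (the theorem, or equivalently lower semicontinuity and positive homogeneity of $\widetilde{F}^{\infty}$, absorbs it); the paper's lemma is self-contained and strictly more general in that it never uses convexity beyond the monotone divergence along rays, which is why the paper can state it for arbitrary continuous $f$ and reuse it elsewhere (it is imported from \cite{Ge-Xu1}). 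Note also that monotonicity along rays --- the hypothesis of the paper's lemma --- is itself a consequence of the convexity plus criticality you already established, so the two proofs are interderivable with little effort.
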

\begin{proof}
$\widetilde{F}(u)\in C^1(\mathds{R}^N)$, and $\nabla_u\widetilde{F}=\widetilde{K}-K_{av}\mathds{1}=(\widetilde{K}_1-K_{av},\cdots,\widetilde{K}_N-K_{av})^T$. For each direction $\xi\in \mathbb{S}^{N-1}\cap\mathscr{U}$, set $\varphi_{\xi}(t)=\widetilde{F}(u_{av}+t\xi)$, $t\in \mathds{R}$. Obviously $\varphi_{\xi}\in C^1(\mathds{R})$, and $\varphi_{\xi}'(t)=(\widetilde{K}-K_{av}\mathds{1})\cdot\xi$. $\varphi_{\xi}(t)$ is convex on $\mathds{R}$ since $\widetilde{F}$ is convex on $\mathds{R}^N$, hence $\varphi'_{\xi}(t)$ is increasing on $\mathds{R}$. Note $u_{av}\in \ln \Omega$, hence there exists $c>0$ so that for each $t\in[-c, c]$, $u_{av}+t\xi$ remains in $\ln \Omega$. Note $\widetilde{F}(u)$ is $C^{\infty}$-smooth on $\ln\Omega$, hence $\varphi_{\xi}(t)$ is $C^{\infty}$-smooth for $t\in[-c, c]$.
Note that the kernel space of $Hess_u\widetilde{F}$ is $\{t\mathds{1}|t\in\mathds{R}\}$, which is perpendicular to the hyperplane $\mathscr{U}$. Hence $\widetilde{F}(u)$ is strictly convex locally in $\ln\Omega\cap\mathscr{U}$. Then $\varphi_{\xi}(t)$ is strictly convex at least on a small interval $[-\delta, \delta]$. This implies that $\varphi'_{\xi}(t)$ is a strictly increase function on $[-\delta, \delta]$. Note that $\varphi'_{\xi}(0)=0$, hence $\varphi'_{\xi}(t)\geq\varphi'_{\xi}(\delta)>0$ for $t>\delta$ while $\varphi'_{\xi}(t)\leq\varphi'_{\xi}(-\delta)<0$ for $t<-\delta$. Hence
$$\varphi_{\xi}(t)\geq\varphi_{\xi}(\delta)+\varphi'_{\xi}(\delta)(t-\delta)$$
for $t\geq \delta$, while
$$\varphi_{\xi}(t)\geq\varphi_{\xi}(-\delta)+\varphi'_{\xi}(-\delta)(t+\delta)$$
for $t\leq -\delta$. This implies
\begin{equation}
\lim\limits_{t\rightarrow\pm\infty}\varphi_{\xi}(t)=+\infty.
\end{equation}

One may use the following Lemma \ref{lemma-gotoinfinity} to get the conclusion.
 \qed
\end{proof}

\begin{lemma}\label{lemma-gotoinfinity}
Assuming $f\in C(\mathds{R}^n)$ and for any direction $\xi\in \mathbb{S}^{n-1}$, $f(t\xi)$ as a function of $t$ is monotone increasing on $[0,+\infty)$ and tends to $+\infty$ as $t\rightarrow +\infty$. Then $\lim\limits_{x\rightarrow\infty}f(x)=+\infty.$
\end{lemma}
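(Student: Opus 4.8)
The plan is to unfold the conclusion into its $\varepsilon$–$\delta$ form and then use the compactness of the unit sphere to upgrade the direction-by-direction divergence into a genuinely uniform one. Concretely, I would fix an arbitrary level $M>0$ and aim to produce a single radius $R>0$ such that $f(x)>M$ whenever $|x|>R$; since $M$ is arbitrary, this is exactly the assertion $\lim_{x\to\infty}f(x)=+\infty$.

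First I would work along one fixed direction $\xi\in\mathbb{S}^{n-1}$. Because $f(t\xi)\to+\infty$ as $t\to+\infty$, there is a radius $T_\xi>0$ with $f(T_\xi\xi)>M+1$, and by the assumed monotonicity of $t\mapsto f(t\xi)$ this bound persists outward: $f(t\xi)\ge f(T_\xi\xi)>M+1$ for all $t\ge T_\xi$. The next step is to spread this from the single ray $\{t\xi\}$ to a whole cone of nearby directions. The map $\eta\mapsto f(T_\xi\eta)$ is continuous on $\mathbb{S}^{n-1}$, being the composition of the dilation $\eta\mapsto T_\xi\eta$ with the continuous function $f$, and its value at $\eta=\xi$ exceeds $M$. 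Hence there is an open neighborhood $W_\xi\subset\mathbb{S}^{n-1}$ of $\xi$ on which $f(T_\xi\eta)>M$. Combining this with radial monotonicity, I obtain that for every $\eta\in W_\xi$ and every $t\ge T_\xi$ one has $f(t\eta)\ge f(T_\xi\eta)>M$.

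Compactness then closes the argument. The family $\{W_\xi\}_{\xi\in\mathbb{S}^{n-1}}$ is an open cover of the compact sphere $\mathbb{S}^{n-1}$, so I extract a finite subcover $W_{\xi_1},\dots,W_{\xi_m}$ and set $R=\max\{T_{\xi_1},\dots,T_{\xi_m}\}$. Now take any $x$ with $|x|>R$ and write $x=|x|\,\eta$ with $\eta=x/|x|\in\mathbb{S}^{n-1}$. Then $\eta\in W_{\xi_j}$ for some $j$, and since $|x|>R\ge T_{\xi_j}$, the conclusion of the previous paragraph gives $f(x)=f(|x|\,\eta)>M$. This is precisely the uniform bound sought.

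The one genuine obstacle is that the threshold radius $T_\xi$ produced in the first step carries no a priori control in $\xi$: it need not vary continuously, so one cannot simply take a supremum of the $T_\xi$ over the whole sphere (which could be infinite). The angular-continuity step together with compactness is exactly what circumvents this, as continuity of $f$ lets each pointwise bound survive on an open cone of directions, and compactness of $\mathbb{S}^{n-1}$ then replaces the possibly unbounded collection of thresholds by the single finite radius $R$. I would take care only to check the harmless bookkeeping that $R>0$ (so that $x/|x|$ is defined for the relevant $x$) and that monotonicity is invoked in the correct direction.
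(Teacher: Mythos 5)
Your proof is correct, and it takes a genuinely different (dual) route to the same destination. Both arguments rest on the same two pillars -- radial monotonicity to propagate a lower bound outward along each ray, and continuity of $f$ in the angular variable to spread a pointwise bound to a neighborhood of directions on $\mathbb{S}^{n-1}$ -- but you deploy them directly, via the open-cover formulation of compactness, whereas the paper argues by contradiction via sequential compactness. The paper introduces the auxiliary function $h(t)=\inf_{\|x\|=t}f(x)$, supposes it stays below some $M$, extracts a sequence $x_k$ with $\|x_k\|=k$ and $f(x_k)<M$, passes to a convergent subsequence of the normalized directions $x_k/\|x_k\|\to x^*$, and then uses divergence along the single ray through $x^*$ plus continuity of $f$ at one point $ax^*$ to contradict $f(x_m)<M$ through the chain $f(x_m)\geq f(a\,x_m/\|x_m\|)>M$. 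Your version instead fixes $M$, attaches to each direction $\xi$ a threshold $T_\xi$ and an open cone $W_\xi$ of directions on which $f(T_\xi\,\cdot)>M$, and extracts a finite subcover to produce a single radius $R=\max_j T_{\xi_j}$; you also correctly identify the crux, namely that $\xi\mapsto T_\xi$ has no continuity or boundedness a priori, which is exactly what the finite subcover repairs (the paper's subsequence extraction plays the identical role). What each approach buys: yours is constructive in the sense of exhibiting an explicit $R(M)$ and avoids both the contradiction scaffolding and the detour through $h$; the paper's needs continuity of $f$ at only one point per bad sequence and is marginally shorter to write. Two cosmetic remarks: your safety margin $M+1$ is harmless but unnecessary, since continuity already gives a neighborhood where $f(T_\xi\eta)>M$ from the strict inequality $f(T_\xi\xi)>M$ alone; and your bookkeeping that $R>0$ is indeed automatic because every $T_\xi>0$.
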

\begin{proof}
We follow the proof of Lemma B.1 in \cite{Ge-Xu1}. Denote $h(t)\triangleq \inf\limits_{\|x\|=t} f(x)$. We just need to prove $\lim\limits_{t\rightarrow+\infty}h(t)=+\infty$. If not, since $h(t)$ is a monotone increasing function, we may find $M>0$, such that $h(t)<M$ for all $t\geq0$. For each $k\in\ \mathds{N}$, choose $x_k \in\mathds{R}^n$ such that $\|x_k\|=k$ and $f(x_k)<M$.
Denote $\overset{\circ}{x_k}=\frac{x_k}{\|x_k\|}$.
$\{\overset{\circ}{x_k}\}$ must has a convergent subsequence, denoted as $\{\overset{\circ}{x_k}\}$ too.
Suppose $\overset{\circ}{x_k}\rightarrow x^*\in \mathbb{S}^{n-1}$.
Then there exists a positive integer $a>0$, such that $f(tx^*)>M$ for all $t\geq a$. Let $\mathbb{S}^{n-1}(a)\triangleq \{x\in\mathds{R}^n|\|x\|=a\}$. Select $\mu>0$ so that for all $x\in B(ax^*,\mu)\cap \mathbb{S}^{n-1}(a)$ we have $f(x)>M$. However, $B(x^*,\frac{\mu}{a})\cap \mathbb{S}^{n-1}$ is an open neighborhood (relative to $\mathbb{S}^{n-1}$) of $x^*$. So there exists at least one $m>a$ such that $\overset{\circ}{x_m}\in B(x^*,\frac{\mu}{a})\cap \mathbb{S}^{n-1}$. Then $a\overset{\circ}{x_m}\in B(ax^*,\mu)\cap \mathbb{S}^{n-1}(a)$, hence
$$f(x_m)=f(m\overset{\circ}{x_m})\geq f(a\overset{\circ}{x_m})>M,$$
which contradicts the selection of $x_k$. Thus we get the conclusion above.\qed
\end{proof}

\begin{corollary}\label{corollary-unique-K-average}
Assuming there exists a metric $u_{av}\in \ln \Omega$ of constant curvature (this metric is unique up to scaling). Then it is unique in the extended space $\mathds{R}^N$ up to scaling.
\end{corollary}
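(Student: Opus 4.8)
The plan is to characterize constant curvature metrics in $\mathds{R}^N$ as critical points of the convex extended potential $\widetilde{F}$, and then to upgrade the known uniqueness in $\ln\Omega$ to uniqueness in all of $\mathds{R}^N$ by using the strict convexity that survives only on the interior set $\ln\Omega$. First I would note that for any $u^*\in\mathds{R}^N$, the metric $u^*$ has constant curvature precisely when $\nabla_u\widetilde{F}(u^*)=\widetilde{K}(u^*)-K_{av}\mathds{1}=0$, i.e. when $u^*$ is a critical point of $\widetilde{F}$. Since $\widetilde{F}\in C^1(\mathds{R}^N)$ is convex, every critical point is a global minimizer; in particular $u_{av}$ and any further constant curvature metric $u^*$ are both global minimizers of $\widetilde{F}$ over $\mathds{R}^N$.

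Next I would exploit the scaling invariance of $\widetilde{F}$ along the $\mathds{1}$-direction. Because the generalized angles $\tilde{\theta}_i^{jk}$ are scale invariant we have $\widetilde{K}(u+t\mathds{1})=\widetilde{K}(u)$, so by the extended Gauss--Bonnet formula (\ref{Gauss-Bonnet-extend}) the directional derivative $\frac{d}{dt}\widetilde{F}(u+t\mathds{1})=(\widetilde{K}(u)-K_{av}\mathds{1})\cdot\mathds{1}=\sum_i\widetilde{K}_i-NK_{av}=0$. Hence $\widetilde{F}$ is invariant along $\mathds{1}$, and any critical point stays critical after adding a multiple of $\mathds{1}$ (recall that $u\mapsto u+t\mathds{1}$ is exactly scaling the radii). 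Given a constant curvature metric $u^*$, I would translate it into the hyperplane $\mathscr{U}$ by setting $\bar{u}^*=u^*+c\mathds{1}\in\mathscr{U}$; this $\bar{u}^*$ is again a global minimizer. Since $u_{av}\in\mathscr{U}$ as well, it now suffices to prove $\bar{u}^*=u_{av}$, for then $u^*$ and $u_{av}$ agree up to scaling.

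Finally I would run the rigidity step on the segment joining the two minimizers. Because the minimizer set of a convex function is convex, $\widetilde{F}$ is constant along the whole segment $[u_{av},\bar{u}^*]\subset\mathscr{U}$. Suppose toward a contradiction that $\bar{u}^*\neq u_{av}$. Since $u_{av}\in\ln\Omega\cap\mathscr{U}$ and $\ln\Omega$ is open, a short initial portion of this segment lies in $\ln\Omega\cap\mathscr{U}$, where by Proposition \ref{prop-HessF-tuta-smooth} the potential $\widetilde{F}$ is $C^\infty$ with $Hess_u\widetilde{F}$ whose only null direction is $\mathds{1}$, which is transverse to $\mathscr{U}$; hence the restriction of $\widetilde{F}$ to $\mathscr{U}$ is strictly convex near $u_{av}$. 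A strictly convex function cannot be constant on a nondegenerate subsegment, giving the contradiction. Therefore $\bar{u}^*=u_{av}$, and the constant curvature metric is unique in $\mathds{R}^N$ up to scaling.

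I expect the main obstacle to be exactly this last step, because $\widetilde{F}$ is only convex, not strictly convex, on all of $\mathds{R}^N$: strict convexity in the $\mathscr{U}$-directions is guaranteed by Proposition \ref{prop-HessF-tuta-smooth} only inside the open set $\ln\Omega$, so the textbook principle ``a strictly convex potential has a unique minimizer'' does not apply globally. The remedy is to combine the global convexity of $\widetilde{F}$ (which forces the minimizer set to be convex, hence to contain the entire segment between any two minimizers) with the local strict convexity available at the interior point $u_{av}$, so that no nondegenerate segment of minimizers can emanate from $u_{av}$. Note that propriety of $\widetilde{F}$ on $\mathscr{U}$ (Proposition \ref{Prop-F-tura}) is not needed here, since the existence of the minimizer $u_{av}$ is assumed and only its uniqueness is at issue.
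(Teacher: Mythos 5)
Your proof is correct and takes essentially the same route as the paper: the paper also works on the hyperplane $\mathscr{U}$ and derives the contradiction from the global convexity of $\widetilde{F}$ on $\mathds{R}^N$ combined with its local strict convexity on $\ln\Omega\cap\mathscr{U}$ near $u_{av}$, merely packaged through the one-variable function $\varphi_\xi(t)=\widetilde{F}(u_{av}+t\xi)$ along the segment from $u_{av}$ to $u^*$ --- citing from Proposition \ref{Prop-F-tura} that $t=0$ is the unique zero of $\varphi'_\xi$ --- instead of through the convexity of the minimizer set (the two are interchangeable, since $\varphi'_\xi$ monotone with two zeros forces $\varphi_\xi$ constant on the intervening segment, which is exactly your segment of minimizers). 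Your explicit reduction of an arbitrary $u^*\in\mathds{R}^N$ into $\mathscr{U}$ via the scale invariance $\widetilde{K}(u+t\mathds{1})=\widetilde{K}(u)$, and your closing observation that properness plays no role here, are both accurate points the paper leaves implicit.
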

\begin{proof}
Suppose $u^*\in \mathscr{U}$ is also a metric of constant curvature, with $u^*\neq u_{av}$. Write $t_0=\|u^*-u_{av}\|$ and  $\xi=t_0^{-1}(u^*-u_{av})$. Then $\varphi_{\xi}'(t_0)=(\widetilde{K}(u^*)-K_{av}\mathds{1})\cdot\xi=0$. On the other hand, we had proved in Proposition \ref{Prop-F-tura} that $t=0$ is the unique zero point of $\varphi_{\xi}'(t)$, which is a contradiction. \qed
\end{proof}

\begin{theorem}\label{thm-section3-mainthm}
The solution to the extended flow
\begin{equation}
\begin{cases}
{u_i}'(t)=K_{av}-\widetilde{K}_i\\
\;\,u(0)\in \ln\Omega
\end{cases}
\end{equation}
exists for all time $t\in[0,+\infty)$. If further assuming there exists a metric $u_{av}\in \ln \Omega$ of constant curvature, then the solution converges to $u_{av}$ exponentially fast.
\end{theorem}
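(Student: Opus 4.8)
The plan is to read the extended flow as the negative gradient flow $\dot u=-\nabla\widetilde F$ of the extended Ricci potential, and to separate long-time existence from convergence. For long-time existence, the first observation is that every extended angle obeys $\widetilde\theta_i^{jk}\in[0,\pi]$, so $\widetilde K_i=2\pi-\sum_{\{ijk\}\in F}\widetilde\theta_i^{jk}$ is bounded above and below by constants depending only on the combinatorics of $\mathcal T$. Hence the right-hand side $K_{av}-\widetilde K_i$ is a continuous function on all of $\mathds R^N$ that is uniformly bounded, say $|K_{av}-\widetilde K_i|\le c$. Starting from $u(0)\in\ln\Omega$, Picard's theorem gives the unique smooth maximal solution on $[0,T)$ inside $\ln\Omega$; if $T<+\infty$, the a priori estimate $|u_i(t)-u_i(0)|\le ct$ keeps the trajectory in a fixed compact box, so it cannot reach infinity, and the continuous bounded field lets us continue the solution across $\partial(\ln\Omega)$. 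The same bound forbids finite-time blow-up, so the extended solution exists for all $t\in[0,+\infty)$, which is precisely Theorem \ref{Thm-main-1}.

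For convergence I would first record that the flow preserves $\sum_i u_i$, since $\sum_i(K_{av}-\widetilde K_i)=NK_{av}-2\pi\chi(M)=0$ by the extended Gauss--Bonnet formula (\ref{Gauss-Bonnet-extend}). Thus $\nabla\widetilde F=\widetilde K-K_{av}\mathds 1$ is tangent to $\mathscr U$, and after replacing the given $u_{av}$ by the unique member of its scaling family (adding a suitable multiple of $\mathds 1$, which stays in $\ln\Omega$ since $\ln\Omega$ is invariant under translation by $\mathds 1$) with $\sum_i u_{av,i}=\sum_i u_i(0)$, the entire trajectory remains in the affine hyperplane $\mathscr U$. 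Along the flow $\frac{d}{dt}\widetilde F(u(t))=-|\nabla\widetilde F(u(t))|^2\le 0$, so $\widetilde F$ is nonincreasing; by Proposition \ref{Prop-F-tura} it is proper and bounded below on $\mathscr U$, so the trajectory lies in the compact sublevel set $\{u\in\mathscr U:\widetilde F(u)\le\widetilde F(u(0))\}$.

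Integrating the energy identity gives $\int_0^\infty|\nabla\widetilde F(u(t))|^2\,dt<+\infty$, so along some sequence $t_n\to\infty$ we have $u(t_n)\to p$ with $\nabla\widetilde F(p)=0$ by continuity; by Corollary \ref{corollary-unique-K-average} the only such critical point in $\mathscr U$ is $u_{av}$. Since $\widetilde F|_{\mathscr U}$ is convex and, by Proposition \ref{prop-HessF-tuta-smooth}, has at $u_{av}\in\ln\Omega$ the positive-definite Hessian $L|_{\mathds 1^\perp}$, the point $u_{av}$ is a strict local and hence unique global minimizer of $\widetilde F$ on $\mathscr U$. Consequently the monotone limit of $\widetilde F(u(t))$ equals its minimum $\widetilde F(u_{av})$, every $\omega$-limit point must minimize $\widetilde F|_{\mathscr U}$ and therefore equals $u_{av}$, and precompactness of the trajectory upgrades this to $u(t)\to u_{av}$.

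For the exponential rate I would use that $\ln\Omega$ is open and the flow is $C^\infty$ there, so once $u(t)$ enters a small neighborhood of $u_{av}$ the dynamics is governed by the linearization $\dot v=-L(u_{av})v$ on the invariant tangent space $\mathds 1^\perp$, where $L(u_{av})$ is positive definite with smallest eigenvalue $\lambda_1>0$. Thus $u_{av}$ is a locally exponentially stable equilibrium, and combined with the global convergence just shown, $\|u(t)-u_{av}\|$ decays like $e^{-\lambda t}$ for any $\lambda<\lambda_1$. The main obstacle throughout is the mere continuity, rather than Lipschitz regularity, of $\widetilde K$ across $\partial\Omega$: it forces one to use the convex proper $C^1$ potential $\widetilde F$ as a Lyapunov function instead of relying on smoothness of the vector field, and to extract the exponential rate only after the trajectory has re-entered the smooth locus $\ln\Omega$ near $u_{av}$.
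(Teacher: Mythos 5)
Your proposal is correct and follows essentially the same route as the paper: long-time existence from the uniform bound on $\widetilde{K}$, convergence by using the proper convex $C^1$ potential $\widetilde{F}$ as a Lyapunov function together with the rigidity Corollary \ref{corollary-unique-K-average}, and the exponential rate from the linearization $-L$ restricted to $\mathds{1}^{\perp}$ at $u_{av}\in\ln\Omega$. Your additional bookkeeping---noting that $\sum_i u_i$ is conserved so the trajectory stays in $\mathscr{U}$, extracting $t_n$ from $\int_0^{\infty}\|\nabla\widetilde{F}\|^2\,dt<+\infty$ instead of the paper's mean value theorem, and the explicit unique-minimizer step upgrading subsequential to full convergence before invoking asymptotic stability---only makes explicit what the paper's proof leaves implicit.
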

\begin{proof}
All $\widetilde{K}_i$ are uniformly bounded, hence the solution to the extended flow (\ref{Def-extended-flow-u}) exists for all time. Next we prove the convergence part.

Flow (\ref{Def-extended-flow-u}) is a negative gradient flow; that is we may write it as $\dot{u}=-\nabla \widetilde{F}$.
Let $\varphi(t)=\widetilde{F}(u(t))$, then $\varphi'(t)=-\|\widetilde{K}-K_{av}\mathds{1}\|^2\leq 0$. Hence $\varphi(t)$ is decreasing. Assuming the existence of constant curvature metric $u_{av}$, $\widetilde{F}$ is proper, hence $\varphi(t)$ is compactly supported in $\mathds{R}^N$. Also, $\widetilde{F}$ is bounded form below, hence $\varphi(+\infty)$ exists. By the mean value theorem, there exists a sequence $t_n\in(n,n+1)$ such that $\varphi'(t_n)=\varphi(n+1)-\varphi(n)\rightarrow 0$. By choosing a subsequence of $t_n$, which is still denote as $t_n$, we require that $u(t_n)$ converges to some point $u^*$. Hence $\widetilde{K}(u(t_n))$ converges to $\widetilde{K}(u^*)$. Using $\varphi'(t_n)=-\|\widetilde{K}(u(t_n))-K_{av}\mathds{1}\|^2\rightarrow 0$, we have $\widetilde{K}(u^*)=K_{av}\mathds{1}$. By the uniqueness of constant curvature metric in Corollary \ref{corollary-unique-K-average}, we get $u^*=u_{av}$ and $u(t_n)\rightarrow u_{av}$. Note in $\ln \Omega$, $\widetilde{K}=K$ is differentiable, then differentiate the right hand side of the extended flow at $u_{av}$, we get
$$D_u(K_{av}\mathds{1}-\widetilde{K})\big|_{u_{av}}=-D_uK=-\frac{\partial(K_{1},\cdots,K_{N})}{\partial(u_{1},\cdots,u_{N})}=-L.$$
Remember that $L$ is negative definite along the flow, this implies that $u_{av}$ is the asymptotically stable point of the extended flow. Note that for some sufficient big $t_{n}$, $u(t_{n})$ is
very close to constant curvature metric $u_{av}$. Then the solution $\{u(t)\}_{t\geq t_n}$ converges exponentially fast to $u_{av}$, i.e., the original solution $\{u(t)\}_{t\geq 0}$ converges exponentially fast to $u_{av}$.
\qed
\end{proof}

\begin{remark}
Theorem \ref{thm-section3-mainthm} is still true if we change $u(0)\in \ln \Omega$ to arbitrary initial value, i.e., to $u(0)\in \mathds{R}^N$. In fact, the proof above is irrelevant with the selection of $u(0)$. Thus we can deform the inversive distance metric to constant curvature metric from any initial value $r(0)\in \mathds{R}^N_{>0}$, even if $r(0)$ is not a real inversive distance metric. It means that, for practical applications we don't need to verify the triangle inequalities of $r(0)$ any more.
\end{remark}

\section{Deform the metric to prescribed curvature}\label{section-andreev-thurston}
For any prescribed value $\bar{K}=(\bar{K}_1,\cdots,\bar{K}_N)^T\in\mathds{R}^N$, differential form $\sum_{i=1}^N(\widetilde{K}_i-\bar{K}_{i})du_i$ is a continuous closed $1$-form on $\mathds{R}^N$. Hence the prescribed discrete Ricci potential
\begin{equation}\label{def-G-tuta}
\widetilde{G}(u)\triangleq\int_{u_0}^u \sum_{i=1}^N(\widetilde{K}_i-\bar{K}_{i})du_i, \,\,\, u\in \mathds{R}^N
\end{equation}
is well defined for any $u_0\in \mathds{R}^N$. It is the $C^1$-smooth extension of
\begin{equation}
G(u)\triangleq\int_{u'_0}^u \sum_{i=1}^N(K_i-\bar{K}_{i})du_i+C, \,\,\, u\in\ln\Omega,
\end{equation}
where $u'_0\in \ln\Omega$ is arbitrary chosen and the constant $C=\int_{u_0}^{u'_0} \sum_{i=1}^N(\widetilde{K}_i-\bar{K}_{i})du_i$. It's easy to see
\begin{equation*}
\widetilde{G}(u)=\sum_{i=1}^N(2\pi-K_{av})(u_i-\bar{u}_{i})-\sum_{\{ijk\}\in F}\widetilde{F}_{ijk}(u),
\end{equation*}
hence $\widetilde{G}(u)\in C^1(\mathds{R}^N)$ and is convex on the whole space $\mathds{R}^N$. Similar to Proposition \ref{prop-HessF-tuta-smooth}, we can prove that in $\ln\Omega$, $\widetilde{G}(u)$ is $C^{\infty}$-smooth. $Hess_u\widetilde{G}=Hess_u\widetilde{F}$ is positive semi-definite with rank $N-1$ and null space $\{t\mathds{1}|t\in\mathds{R}\}$. If further assuming $\sum_{i=1}^N\bar{K}_i=2\pi\chi(M)$, then
$$\widetilde{G}(u)=\widetilde{G}(u+t\mathds{1})$$
for any $t\in\mathds{R}$ and any $u\in\mathds{R}^N$.

\begin{definition}
Given the triangulation $(M,\mathcal{T})$ with inversive distance $I\geq 0$, $\Omega$ is the space of all possible inversive distance circle packing metrics, consider $K$ as the function of $r$ and denote $K(\Omega)\triangleq\big\{K(r)\big|r\in \Omega\big\}$. Each prescribed $\bar{K}$ with $\bar{K}\in K(\Omega)$ is called admissible. If $\bar{r}\in\Omega$ such that $\bar{K}=K(\bar{r})$, we say $\bar{K}$ is realized by $\bar{r}$.
\end{definition}
\begin{remark}
Note that the $\bar{r}$ is unique (up to a scalar multiplication) in $\Omega$ that realizes $\bar{K}$ by Theorem \ref{Thm-Guoren-Luofeng} proved in Guo \cite{Guoren} and Luo \cite{Luo1}.
\end{remark}

For any admissible prescribed curvature $\bar{K}$ that is realized by $\bar{r}\in\Omega$, let $\bar{u}\in\ln\Omega$ be the corresponding metric in $u$-coordinate and $\widetilde{G}(u)$ is defined as in formula (\ref{def-G-tuta}). Denote
$$\bar{\mathscr{U}}\triangleq \{u\in \mathds{R}^N|\sum_{i=1}^Nu_i=\sum_{i=1}^N\bar{u}_{i}\}.$$ Similar to Proposition \ref{Prop-F-tura}, we can prove $\widetilde{G}(u)$ is proper on $\bar{\mathscr{U}}$ and $\lim\limits_{u\in \bar{\mathscr{U}}, u\rightarrow\infty}\widetilde{G}(u)=+\infty$. Using this fact, we can extend Guo-Luo's global rigidity results, i.e., Theorem \ref{Thm-Guoren-Luofeng} to the following
\begin{theorem}\label{corollary-unique-K-bar}
Assuming the curvature $\bar{K}\in K(\Omega)$ is admissible. Then it is realized by an unique metric $\bar{r}$ in the extended space $\mathds{R}^N_{>0}$ up to a scalar multiplication.\qed
\end{theorem}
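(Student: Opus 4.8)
The plan is to adapt the argument of Corollary \ref{corollary-unique-K-average} almost verbatim, replacing the constant-curvature potential $\widetilde{F}$ and hyperplane $\mathscr{U}$ by the prescribed-curvature potential $\widetilde{G}$ and hyperplane $\bar{\mathscr{U}}$. Since $\bar{r}\in\Omega$ realizes $\bar{K}$, its logarithmic coordinate $\bar{u}\in\ln\Omega$ satisfies $\widetilde{K}(\bar{u})=K(\bar{u})=\bar{K}$, i.e. $\nabla_u\widetilde{G}(\bar{u})=\widetilde{K}(\bar{u})-\bar{K}=0$, so $\bar{u}$ is a critical point of $\widetilde{G}$. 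Because all inner angles $\tilde{\theta}_i^{jk}$ are invariant under a common scaling of the radii, $\widetilde{K}(u+t\mathds{1})=\widetilde{K}(u)$ for every $t\in\mathds{R}$; hence any other realizer of $\bar{K}$ in $\mathds{R}^N_{>0}$ may, after a scaling, be assumed to lie in $\bar{\mathscr{U}}$. It therefore suffices to show that $\bar{u}$ is the only point of $\bar{\mathscr{U}}$ at which $\widetilde{K}=\bar{K}$, which accounts for the ``up to a scalar multiplication'' in the statement.

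First I would suppose, for contradiction, that $u^*\in\bar{\mathscr{U}}$ is a second solution of $\widetilde{K}=\bar{K}$ with $u^*\neq\bar{u}$. Writing $t_0=\|u^*-\bar{u}\|$ and $\xi=t_0^{-1}(u^*-\bar{u})$, the vector $\xi$ is a unit vector lying in the direction space of $\bar{\mathscr{U}}$, so $\xi\cdot\mathds{1}=0$. Setting $\varphi_\xi(t)=\widetilde{G}(\bar{u}+t\xi)$ and differentiating gives $\varphi_\xi'(t)=(\widetilde{K}(\bar{u}+t\xi)-\bar{K})\cdot\xi$, so the two hypotheses $\widetilde{K}(\bar{u})=\bar{K}$ and $\widetilde{K}(u^*)=\bar{K}$ translate precisely into $\varphi_\xi'(0)=0$ and $\varphi_\xi'(t_0)=0$.

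The crux is to show these two vanishings are incompatible, i.e. that $t=0$ is the unique zero of $\varphi_\xi'$. This is exactly the $\widetilde{G}$-analog of the monotonicity established inside Proposition \ref{Prop-F-tura}, and I would reproduce that reasoning: since $\widetilde{G}$ is convex on $\mathds{R}^N$, the function $\varphi_\xi$ is convex and $\varphi_\xi'$ is monotone increasing; since $\bar{u}\in\ln\Omega$ is an interior point, the segment $\bar{u}+t\xi$ remains in $\ln\Omega$ for $t$ in a small interval $[-c,c]$, where $\widetilde{G}$ is $C^\infty$ with $Hess_u\widetilde{G}$ positive semidefinite and null space $\{t\mathds{1}\mid t\in\mathds{R}\}$. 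As $\xi\perp\mathds{1}$, this forces $\varphi_\xi$ to be strictly convex on some $[-\delta,\delta]$, hence $\varphi_\xi'$ strictly increasing there. Combined with $\varphi_\xi'(0)=0$ and the global monotonicity, this yields $\varphi_\xi'(t)\ge\varphi_\xi'(\delta)>0$ for all $t\ge\delta$, so in particular $\varphi_\xi'(t_0)>0$, contradicting $\varphi_\xi'(t_0)=0$.

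I expect the only genuine obstacle to be the step just described. One cannot appeal to global strict convexity of $\widetilde{G}$, because outside $\Omega$ the Hessian is merely positive semidefinite, and even inside $\ln\Omega$ it is degenerate along $\mathds{1}$. The whole argument therefore hinges on bootstrapping from the strict convexity available only in a neighborhood of the interior critical point $\bar{u}$, in directions of $\bar{\mathscr{U}}$, up to the global monotonicity of $\varphi_\xi'$, precisely as in Proposition \ref{Prop-F-tura}. Once this one-dimensional rigidity is in hand the proof closes immediately, the reduction to $\bar{\mathscr{U}}$ via scale invariance supplying the remaining gauge freedom.
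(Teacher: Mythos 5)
Your proposal is correct and follows essentially the same route as the paper, which omits the details of Theorem \ref{corollary-unique-K-bar} precisely because they are the verbatim transposition of Proposition \ref{Prop-F-tura} and Corollary \ref{corollary-unique-K-average} with $\widetilde{F}$, $\mathscr{U}$ replaced by $\widetilde{G}$, $\bar{\mathscr{U}}$ --- exactly the reduction via scale invariance to the hyperplane $\bar{\mathscr{U}}$ and the one-dimensional argument (global convexity of $\widetilde{G}$ plus local strict convexity transverse to $\mathds{1}$ at the interior critical point $\bar{u}$) that you carry out. The only cosmetic point is that when $t_0<\delta$ you should conclude $\varphi_\xi'(t_0)>0$ directly from the strict monotonicity of $\varphi_\xi'$ on $[-\delta,\delta]$ rather than from $\varphi_\xi'(t)\geq\varphi_\xi'(\delta)$ for $t\geq\delta$, the same trivial case distinction implicit in the paper's own argument.
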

Moreover, using the following prescribed curvature flow, we can deform any extended inversive distance circle packing metric $r(0)\in\mathds{R}^N_{>0}$ to any metric with admissible prescribed curvature.
\begin{theorem}\label{thm-section4-mainthm}
Consider the prescribed curvature flow
\begin{equation}\label{def-extended-flow-u-prescribed}
\begin{cases}
{u_i}'(t)=\bar{K}_{i}-\widetilde{K}_i\\
\;\,u(0)\in\mathds{R}^N
\end{cases}
\end{equation}
\begin{description}
  \item[(1)] The solution $u(t)$ to flow (\ref{def-extended-flow-u-prescribed}) exists for all time $t\in[0,+\infty)$.
  \item[(2)] If $u(t)$ converges to some $\bar{u}$, then $\bar{K}$ is admissible and is realized by $\bar{u}$.
  \item[(3)] If $\bar{K}$ is admissible and is realized by $\bar{u}$, then $u(t)$ converges to $\bar{u}$ exponentially fast.
\end{description}\qed
\end{theorem}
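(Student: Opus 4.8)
The plan is to recognize flow (\ref{def-extended-flow-u-prescribed}) as the negative gradient flow $\dot{u}=-\nabla\widetilde{G}$ of the prescribed potential $\widetilde{G}$ in (\ref{def-G-tuta}), since $\nabla_u\widetilde{G}=\widetilde{K}-\bar{K}$, and then to run the argument of Theorem \ref{thm-section3-mainthm} almost verbatim, with $\widetilde{G}$, $\bar{K}$, $\bar{\mathscr{U}}$ playing the roles of $\widetilde{F}$, $K_{av}\mathds{1}$, $\mathscr{U}$. Part (1) is immediate: since each $\tilde{\theta}_i^{jk}\in[0,\pi]$, the extended curvatures $\widetilde{K}_i$ are uniformly bounded by combinatorial data, so the right-hand side $\bar{K}_i-\widetilde{K}_i$ of (\ref{def-extended-flow-u-prescribed}) is bounded and the solution extends to all of $[0,+\infty)$.

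For (2), I would first sum the flow equations and use the extended Gauss--Bonnet formula (\ref{Gauss-Bonnet-extend}) to get $\tfrac{d}{dt}\sum_i u_i=\sum_i\bar{K}_i-2\pi\chi(M)$, a constant; hence if $u(t)$ converges then necessarily $\sum_i\bar{K}_i=2\pi\chi(M)$ and $\sum_i u_i(t)$ is constant along the flow. Along the gradient flow $\tfrac{d}{dt}\widetilde{G}(u(t))=-\|\widetilde{K}-\bar{K}\|^2\le 0$, so $\widetilde{G}(u(t))$ decreases and, as $u(t)\to\bar{u}$, converges; the mean value theorem then furnishes $t_n\to\infty$ with $\|\widetilde{K}(u(t_n))-\bar{K}\|\to 0$, and continuity of $\widetilde{K}$ together with $u(t_n)\to\bar{u}$ yields $\widetilde{K}(\bar{u})=\bar{K}$, so $\bar{u}$ realizes $\bar{K}$ in the extended sense.

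For (3), I would use that admissibility forces $\sum_i\bar{K}_i=2\pi\chi(M)$, so the orbit stays in a single hyperplane parallel to $\bar{\mathscr{U}}$; by the translation invariance $\widetilde{G}(u)=\widetilde{G}(u+t\mathds{1})$ I may assume this hyperplane is $\bar{\mathscr{U}}$ itself. The analog of Proposition \ref{Prop-F-tura} shows $\widetilde{G}$ is proper and bounded below on $\bar{\mathscr{U}}$; combined with the monotonicity of $\widetilde{G}(u(t))$ this confines the orbit to a compact sublevel set. Extracting a subsequence $u(t_n)\to u^*$ as in (2) gives $\widetilde{K}(u^*)=\bar{K}$, and the extended uniqueness Theorem \ref{corollary-unique-K-bar} (together with the fact that $\Omega$ is a cone) forces $u^*$ to be the unique realizing metric with the prescribed value of $\sum_i u_i$, so in particular $u^*=\bar{u}\in\ln\Omega$. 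Finally, linearizing at $\bar{u}\in\ln\Omega$ gives $D_u(\bar{K}-\widetilde{K})|_{\bar{u}}=-L$ with $L=Hess_u\widetilde{F}$; by Proposition \ref{prop-HessF-tuta-smooth} the kernel of $L$ is $\mathds{R}\mathds{1}$, transverse to $\bar{\mathscr{U}}$, so $-L$ is negative definite on the invariant hyperplane and $\bar{u}$ is asymptotically stable. Hence $u(t)$ converges to $\bar{u}$ exponentially fast once some $u(t_n)$ enters the basin, exactly as in Theorem \ref{thm-section3-mainthm}.

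The main obstacle I anticipate is neither in (1) nor in (3)—where the proof is structurally identical to Theorem \ref{thm-section3-mainthm}, the only new ingredient being the invariance of $\sum_i u_i$ that keeps the orbit in $\bar{\mathscr{U}}$—but in upgrading the conclusion of (2) from the extended equality $\widetilde{K}(\bar{u})=\bar{K}$ to genuine admissibility, i.e.\ showing that the limit $\bar{u}$ actually lies in $\ln\Omega$ rather than on a degenerate configuration where some triangle inequality fails. The difficulty is precisely that one cannot invoke Theorem \ref{corollary-unique-K-bar} here, since it presupposes admissibility. To settle this I would analyze the generalized angles at a degenerate limit—where some $\tilde{\theta}_i^{jk}\in\{0,\pi\}$—and argue, using the strict local convexity of $\widetilde{G}$ on $\ln\Omega\cap\bar{\mathscr{U}}$ (Proposition \ref{prop-HessF-tuta-smooth}) together with the structure of $\widetilde{K}$ near $\partial(\ln\Omega)$, that the convex function $\widetilde{G}$ cannot attain its minimum on $\bar{\mathscr{U}}$ at a degenerate point compatible with the target $\bar{K}$; once $\bar{u}\in\ln\Omega$ is established, $e^{\bar{u}}\in\Omega$ realizes $\bar{K}$ and admissibility follows immediately.
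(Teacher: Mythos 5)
Your parts (1) and (3), together with the first paragraph of your part (2), reconstruct precisely the argument the paper intends: the paper omits the proof of Theorem \ref{thm-section4-mainthm} with the remark that ``all the proofs are similar with last section,'' i.e.\ one repeats the proof of Theorem \ref{thm-section3-mainthm} with $\widetilde{G}$, $\bar{K}$, $\bar{\mathscr{U}}$ in place of $\widetilde{F}$, $K_{av}\mathds{1}$, $\mathscr{U}$, using the properness of $\widetilde{G}$ on $\bar{\mathscr{U}}$ (asserted in Section 4 as the analogue of Proposition \ref{Prop-F-tura}) and Theorem \ref{corollary-unique-K-bar} for uniqueness of the limit. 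Your observations that $\sum_i u_i$ is conserved by the extended Gauss--Bonnet formula (\ref{Gauss-Bonnet-extend}) and that $-L$ must be restricted to the invariant hyperplane to get negative definiteness are exactly the (tacit) glosses in the paper's Section 3 proof, handled correctly.

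The genuine problem is your final paragraph. The program you propose for upgrading (2) --- showing that the convex function $\widetilde{G}$ cannot attain its minimum on $\bar{\mathscr{U}}$ at a degenerate point --- cannot succeed, because statement (2), read literally with the paper's definition of admissibility ($\bar{K}\in K(\Omega)$), is false whenever $\Omega\neq\mathds{R}^N_{>0}$, which happens as soon as some $I_e>1$. Indeed, pick any $u_0\in\mathds{R}^N\setminus\ln\Omega$ (for $I_{jk}>1$ one gets such points by taking $r_i$ small, since then $l_{jk}>l_{ij}+l_{ik}$) and set $\bar{K}\triangleq\widetilde{K}(u_0)$, a perfectly legitimate prescribed value. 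Then $u(t)\equiv u_0$ is a solution of flow (\ref{def-extended-flow-u-prescribed}) and trivially converges; yet $\bar{K}$ cannot be admissible: if some $\bar{r}\in\Omega$ realized it, Theorem \ref{corollary-unique-K-bar} would force $e^{u_0}$ to be a scalar multiple of $\bar{r}$, and scalar multiples of points of $\Omega$ remain in $\Omega$ because the triangle inequalities are homogeneous in $r$ --- contradicting $e^{u_0}\notin\Omega$. So $u_0$ is a genuine degenerate critical point (hence global minimizer) of the convex $C^1$ function $\widetilde{G}$, exactly the configuration your sketched strict-convexity argument would have to exclude. The only tenable reading of (2) --- and clearly the intended one, compare the parallel discussion before Theorem \ref{Thm-main-2} --- is that $\bar{u}$ realizes $\bar{K}$ in the extended sense $\widetilde{K}(\bar{u})=\bar{K}$; your first paragraph for (2) proves exactly this, and nothing stronger is provable. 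Note the lacuna does not infect (3), where admissibility is hypothesized so that Theorem \ref{corollary-unique-K-bar} applies, just as you use it.
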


\begin{corollary}
Given a triangulated surface $(M, \mathcal{T}, I)$ with inversive distance $I\geq 0$. Consider the following prescribed curvature flow
\begin{equation}
\begin{cases}
{r_i}'(t)=(\bar{K}_{i}-K_i)r_i\\
\;\,r(0)\in \Omega.
\end{cases}
\end{equation}
Suppose $\{r(t)|t\in[0,T)\}$ is the unique maximal solution with $0<T\leq +\infty$. Then we can always extend it to a solution $\{r(t)|t\in[0,+\infty)\}$ when $T<+\infty$. Furthermore, if $\bar{K}$ is admissible and is realized by $\bar{r}$, then $r(t)$ can always be extended to a solution that converges exponentially fast to $\bar{r}$ as $t\rightarrow+\infty$.\qed
\end{corollary}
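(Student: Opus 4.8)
The plan is to reduce the entire statement to the $u$-coordinate result of Theorem \ref{thm-section4-mainthm} by means of the logarithmic change of variables $u_i=\ln r_i$, which is a diffeomorphism from $\mathds{R}^N_{>0}$ onto $\mathds{R}^N$ carrying $\Omega$ onto $\ln\Omega$. First I would note that along the maximal solution $\{r(t)\mid t\in[0,T)\}\subset\Omega$ one has $\widetilde{K}=K$ throughout $\ln\Omega$; hence, setting $u_i(t)=\ln r_i(t)$,
$$u_i'(t)=\frac{r_i'(t)}{r_i(t)}=\bar{K}_i-K_i=\bar{K}_i-\widetilde{K}_i,$$
so that $u(t)$ solves the extended flow (\ref{def-extended-flow-u-prescribed}) with $u(0)\in\ln\Omega$ on $[0,T)$. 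By the uniqueness part of the Picard theorem, this $u(t)$ coincides with the maximal $u$-solution of (\ref{def-extended-flow-u-prescribed}) on $[0,T)$.

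Next, Theorem \ref{thm-section4-mainthm}(1) guarantees that the $u$-solution of (\ref{def-extended-flow-u-prescribed}) exists for all $t\in[0,+\infty)$. Defining $r_i(t):=e^{u_i(t)}$ for every $t\geq0$ then yields a curve in $\mathds{R}^N_{>0}$ that agrees with the original $r(t)$ on $[0,T)$ and satisfies the extended equation $r_i'(t)=(\bar{K}_i-\widetilde{K}_i)r_i$ for all time. This is precisely the sought extension of $\{r(t)\mid t\in[0,T)\}$ to $\{r(t)\mid t\in[0,+\infty)\}$. The passage of $u(t)$ out of $\ln\Omega$ at time $T$ --- equivalently, the first failure of the triangle inequalities --- causes no difficulty, because $\widetilde{K}$ is globally defined and continuous on $\mathds{R}^N_{>0}$, so the exponentiated curve continues smoothly.

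For the convergence assertion, assume $\bar{K}$ is admissible and realized by $\bar{r}\in\Omega$, and put $\bar{u}=\ln\bar{r}\in\ln\Omega$. By Theorem \ref{thm-section4-mainthm}(3) the $u$-solution converges to $\bar{u}$ exponentially fast, i.e. there exist $C,\lambda>0$ with $\|u(t)-\bar{u}\|\leq Ce^{-\lambda t}$. Since the map $u\mapsto(e^{u_1},\cdots,e^{u_N})$ is smooth and therefore Lipschitz on a bounded neighbourhood of $\bar{u}$, one gets $\|r(t)-\bar{r}\|\leq C'e^{-\lambda t}$ for some $C'>0$ and all large $t$, whence $r(t)\to\bar{r}$ exponentially fast.

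Because the whole argument is a transcription of Theorem \ref{thm-section4-mainthm} through a fixed diffeomorphism, there is no genuine analytic obstacle. The only point requiring care --- and the step I would write out most explicitly --- is the bookkeeping at $\partial\Omega$: verifying that the extended $r$-flow produced by exponentiating the global $u$-solution really does continue the original maximal solution past the time $T$ at which the triangle inequalities first break down, and that the two solutions coincide on $[0,T)$ by uniqueness.
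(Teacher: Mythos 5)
Your proposal is correct and is essentially the argument the paper intends: the paper omits the proof of this corollary, remarking only that it is similar to the previous section, and the intended reduction is exactly your change of variables $u_i=\ln r_i$, which transfers the $r$-flow to the extended $u$-flow of Theorem \ref{thm-section4-mainthm} and then exponentiates the global $u$-solution back. Your bookkeeping at $\partial\Omega$ (uniqueness inside $\ln\Omega$, where $\widetilde{K}=K$ is smooth, together with the global continuity and boundedness of $\widetilde{K}$ on $\mathds{R}^N_{>0}$) mirrors the paper's treatment of the analogous extension in Theorems \ref{Thm-main-1} and \ref{Thm-main-2}.
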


Since all the proofs are similar with last section, we omit the details here.

\section{Partial results for $K(\Omega)$}\label{section-andreev-thurston}
As we have seen in Theorem \ref{thm-section4-mainthm}, convergence of $r(t)$ is equivalent to $\bar{K}$ admissible. We want to describe the shape of
$K(\Omega)$. For $(M, \mathcal{T}, \Phi)$ with circle packing metric setting, $\Omega$ equals to $\mathds{R}^N_{>0}$, the classical Andreev-Thurston's theorem completely describes the shape of $K(\mathds{R}^N_{>0})$. Fix a triangulated surface $(M, \mathcal{T})$. For any nonempty proper subset $A\subset V$, let $F_A$ be the subcomplex whose vertices are in $A$ and let $Lk(A)$ be the set of pairs $(e, v)$ of an edge $e$ and a vertex $v$ satisfying the following three conditioins: (1) The end points of $e$ are not in $A$; (2) $v$ is in $A$; (3) $e$ and $v$ form a triangle.

First we consider circle packing case (with weight $\Phi\in[0,\frac{\pi}{2}]$). In this setting, the admissible metric space is $\mathds{R}^N_{>0}$.
Denote
\begin{equation}\label{Y_I half space}
Y_A^{\Phi}\triangleq\Big\{x\in \mathds{R}^N \Big|\sum_{i\in A}x_i >-\sum_{(e,v)\in Lk(A)}\big(\pi-\Phi(e)\big)+2\pi\chi(F_A)\Big\}.
\end{equation}
Obviously, the discrete Gaussian curvature $K$ is uniquely determined by circle packing metric $r$. Hence $K$=$K(r):\mathds{R}^N_{>0}\rightarrow \mathds{R}^N$ is a vector-valued function of $r$, which is called the curvature map. In the following, the symbol $K$ either means a concrete discrete Gaussian curvature or the curvature map, which is clear from the context. The classical Andreev-Thurston's theorem says that the curvature map $K=K(r)$ is injective (up to scaling), and
the space of all possible curvatures $K(\mathds{R}^N_{>0})\triangleq\big\{K(r)\big|r\in \mathds{R}^N_{>0}\big\}$ is completely determined by $Y_A^{\Phi}$.

\begin{theorem}\label{Thm-Andreev-thurston} \textbf{(Andreev-Thurston)}
\;Given a weighted triangulated surface $(M, \mathcal{T}, \Phi)$, the curvature map $K$ restricted to the subset $\big\{r\in \mathds{R}^N_{>0}\big|\prod_{i=1}^N=1\big\}$ is injective, i.e., the metric is determined by its curvature up to a scalar multiplication. Moreover,
the space of all possible discrete Gaussian curvatures $K(\mathds{R}^N_{>0})$ equals to the following convex set
\begin{equation}\label{K-space}
\Big\{x\in \mathds{R}^N\Big|\sum_{i=1}^Nx_i=2\pi\chi(M)\Big\}\bigcap\Big(\mathop{\bigcap} _{\phi\neq A\subsetneqq V} Y_A^{\Phi} \Big).
\end{equation}
\end{theorem}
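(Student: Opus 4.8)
The plan is to prove the two assertions---rigidity (injectivity up to scaling) and the identification of the image---separately, exploiting that in the weight setting $\Phi\in[0,\frac{\pi}{2}]$ Thurston's claim guarantees the triangle inequalities always hold, so $\Omega=\mathds{R}^N_{>0}$, $\ln\Omega=\mathds{R}^N$, and the discrete Ricci potential is genuinely $C^\infty$ and convex on the whole space (no boundary degeneration occurs in the interior). Throughout I work in the coordinates $u_i=\ln r_i$ and on the hyperplane $\mathscr{H}=\{u\in\mathds{R}^N\mid\sum_i u_i=0\}$, which corresponds to the normalization $\prod_i r_i=1$.

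For rigidity, the key input is that $L=\frac{\partial(K_1,\dots,K_N)}{\partial(u_1,\dots,u_N)}$ is symmetric, positive semi-definite, of rank $N-1$, with null space $\{t\mathds{1}\mid t\in\mathds{R}\}$, proved exactly as in Lemma \ref{Lemma-Guo} and Proposition \ref{prop-HessF-tuta-smooth} (Guo's single-triangle computation assembled over $F$). Consequently the potential $F$ with $\nabla_u F=K-K_{av}\mathds{1}$ is convex on $\mathds{R}^N$ and strictly convex on $\mathscr{H}$, since $L$ is positive definite transverse to $\mathds{1}$. As $K-K_{av}\mathds{1}$ is already tangent to $\mathscr{H}$, it is the intrinsic gradient of $F|_{\mathscr{H}}$, and the gradient of a strictly convex function is injective; hence $u\mapsto K(u)$ is injective on $\mathscr{H}$, which is the rigidity statement.

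For the inclusion of $K(\mathds{R}^N_{>0})$ in the convex set (\ref{K-space}), the constraint $\sum_i K_i=2\pi\chi(M)$ is the Gauss--Bonnet identity (\ref{Gauss-Bonnet}). Fix $\emptyset\neq A\subsetneq V$ and write $\sum_{i\in A}K_i=2\pi|A|-\sum_{i\in A}\sum_{\tau\ni i}\theta_i^\tau$. I would split the faces $\tau$ meeting $A$ into those with three, two, or one vertex in $A$; the one-vertex faces are indexed precisely by $Lk(A)$, the $A$-vertex $v$ being opposite an edge $e$ outside $A$. A face fully in $A$ contributes $\pi$; a two-vertex face contributes $\pi-\theta_{v'}<\pi$ (outer angle $\theta_{v'}>0$); a one-vertex face contributes $\theta_v<\pi-\Phi(e)$. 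This last is the crucial single-triangle angle bound, valid because $\Phi(e)\le\frac{\pi}{2}$: a law-of-cosines computation shows $\cos\theta_v\to-\cos\Phi(e)$ as $r_v\to 0$, so $\pi-\Phi(e)$ is the supremum of the angle opposite $e$. Combining these with the incidence identity $\#\{\text{two-vertex faces}\}=2|E_A|-3|T_A|$ (each $A$-edge lies in two faces, each $A$-triangle has three $A$-edges) yields, after simplification, the strict inequality $\sum_{i\in A}K_i> 2\pi\chi(F_A)-\sum_{(e,v)\in Lk(A)}(\pi-\Phi(e))$, i.e.\ $K(r)\in Y_A^\Phi$.

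The reverse inclusion is where the real work lies. Given $\bar K$ in the (relatively open, convex) polytope $P$ of (\ref{K-space}), I would consider the prescribed potential $G(u)=\int_{u_0}^u\sum_i(K_i-\bar K_i)\,du_i$, convex on $\mathds{R}^N$ and strictly convex on $\mathscr{H}$, and show it is proper on $\mathscr{H}$; its minimizer $u^*$ then satisfies $K(u^*)-\bar K\perp\mathscr{H}$, and since $\sum_i K_i(u^*)=\sum_i\bar K_i=2\pi\chi(M)$ this forces $K(u^*)=\bar K$, realizing $\bar K$. To prove properness I would analyze, for each $\eta\in\mathscr{H}$, the asymptotic slope $\lim_{t\to+\infty}\frac{d}{dt}G(t\eta)=\sum_i(\hat K_i(\eta)-\bar K_i)\eta_i$, where $\hat K_i(\eta)=\lim_{t\to\infty}K_i(t\eta)$. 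Decomposing $\eta$ over its sub-level sets $B_1\subset\cdots\subset B_m=V$ (Abel summation) rewrites this slope as $-\sum_\ell(v_{\ell+1}-v_\ell)\sum_{i\in B_\ell}(\hat K_i-\bar K_i)$ with $v_{\ell+1}-v_\ell>0$. The main obstacle is the degeneration analysis identifying $\sum_{i\in B_\ell}\hat K_i$ with the boundary value $2\pi\chi(F_{B_\ell})-\sum_{Lk(B_\ell)}(\pi-\Phi(e))$: along $t\eta$ the radii outside $B_\ell$ dominate those inside, driving the one-vertex angles $\theta_v\to\pi-\Phi(e)$ and the two-vertex outer angles $\theta_{v'}\to 0$, so the inequalities of the previous paragraph all become equalities in the limit. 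Granting this, $\bar K\in P$ gives $\sum_{i\in B_\ell}(\hat K_i-\bar K_i)<0$ for every proper $B_\ell$, whence the asymptotic slope is strictly positive in every direction; convexity forces $G(t\eta)\to+\infty$, and Lemma \ref{lemma-gotoinfinity} upgrades this to properness of $G$ on $\mathscr{H}$. I expect the single-triangle limiting-angle computation, together with the bookkeeping that makes each boundary value appear exactly, to be the technical heart of the argument.
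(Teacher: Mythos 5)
The paper never proves Theorem \ref{Thm-Andreev-thurston} itself: it states the result as classical and cites Thurston, Marden--Rodin, Colin de Verdi\`ere, He, Chow--Luo and Stephenson, so there is no internal proof to compare against line by line. Your variational argument is correct in outline and is precisely the Colin de Verdi\`ere/Chow--Luo route among those citations; more to the point, it coincides with the machinery this paper builds for the inversive-distance generalization, specialized to $I=\cos\Phi\in[0,1]$. Your single-triangle bound $\theta_v<\pi-\Phi(e)$ is Claim \ref{claim-1}; your $A_1/A_2/A_3$ bookkeeping with the incidence identity $\chi(F_A)=|A|-|A_2|/2-|A_3|/2$ is verbatim the computation in the proof of Theorem \ref{Thm-2}; your ray-degeneration analysis is Proposition \ref{degenerate}; and your properness scheme parallels Proposition \ref{Prop-F-tura}. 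What your argument buys, and what the paper's inversive-distance development cannot deliver, is the existence half: in the $\Phi$-setting $\Omega=\mathds{R}^N_{>0}$, degeneration happens only along rays to the coordinate boundary, and there the limiting curvature sums attain the $Y_A^\Phi$ boundary values exactly, so the strict inequalities defining the polytope dualize perfectly into strictly positive asymptotic slopes. The paper's closing remark of Section 5 shows this mechanism genuinely breaks when some $I_e>1$ (the curvature need not approach $\partial Y_A$ as $r\to\partial\Omega$), which is exactly why the paper obtains only one inclusion in the inversive case.

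Three points to tighten. First, strictness of $\sum_{i\in A}K_i>2\pi\chi(F_A)-\sum_{(e,v)\in Lk(A)}(\pi-\Phi(e))$ needs $A_1\cup A_2\neq\emptyset$, which follows from connectedness of $M$ for $\emptyset\neq A\subsetneq V$; the paper says this explicitly in Theorem \ref{Thm-2} and you should too. Second, your appeal to Lemma \ref{lemma-gotoinfinity} is slightly off as stated: that lemma assumes monotonicity along rays from the base point, which in Proposition \ref{Prop-F-tura} comes for free because the base point is a critical point of the potential---but your $G$ has no known critical point yet, since that is what you are trying to produce. You must instead use convexity directly: a convex function on $\mathscr{H}$ whose asymptotic slope $\lim_{t\to+\infty}\frac{d}{dt}G(u_0+t\eta)$ is strictly positive in every direction $\eta$ has positive recession function and is therefore coercive; your Abel-summation computation establishes exactly this, and you do gesture at convexity, but the logical substitute for the lemma's monotonicity hypothesis should be made explicit. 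Third, justify the existence of the limits $\hat K_i(\eta)$: along a ray every ratio $r_i/r_j=e^{c_{ij}}e^{t(\eta_i-\eta_j)}$ converges in $[0,+\infty]$, so each inner angle converges by the law of cosines, and the limit angles still sum to $\pi$ in each triangle, giving $\sum_{i\in V}\hat K_i=2\pi\chi(M)$---which is needed to kill the top term in your Abel summation. With these repairs the proposal is a complete and correct proof.
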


For a proof, see Thurston \cite{T1}, Marden-Rodin \cite{Marden-Rodin}, Colin de Verdi$\grave{e}$re \cite{Colindev}, He \cite{Hezhengxu1}, Chow-Luo \cite{CL1} and Stephenson \cite{Stephenson}. Note that the existence of constant curvature metric is equivalent to $K_{av}\mathds{1}\in K(\mathds{R}^N_{>0})$. Substitute $K_{av}\mathds{1}$ into (\ref{K-space}) and (\ref{Y_I half space}), one can easily get:

\begin{theorem}\label{Thm-thurston} \textbf{(Thurston)} \;Given a weighted triangulated surface $(M, \mathcal{T}, \Phi)$,
the existence of constant curvature metric is equivalent to the following combinatorial and topological conditions
\begin{equation}\label{condition-Thurston}
2\pi\chi(M)\frac{|A|}{|V|} >-\sum_{(e,v)\in Lk(A)}\big(\pi-\Phi(e)\big)+2\pi\chi(F_A), \;\;\forall A: \phi\subsetneqq A\subsetneqq V.
\end{equation}
Moreover, if constant curvature metric exists, it is unique up to a scalar multiplication.
\end{theorem}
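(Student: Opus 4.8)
The plan is to deduce this statement directly from the Andreev-Thurston Theorem \ref{Thm-Andreev-thurston}, which already provides a complete description of the image $K(\mathds{R}^N_{>0})$ of the curvature map. The first step is to record the elementary equivalence: a constant curvature metric is precisely a metric $r$ with $K_i(r)=K_{av}$ for every $i$, that is, with $K(r)=K_{av}\mathds{1}$; hence such a metric exists if and only if the constant vector $K_{av}\mathds{1}$ lies in $K(\mathds{R}^N_{>0})$. Everything then reduces to testing membership of the single point $x=K_{av}\mathds{1}$ in the explicit set (\ref{K-space}).

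Next I would substitute $x=K_{av}\mathds{1}$, with $K_{av}=2\pi\chi(M)/N=2\pi\chi(M)/|V|$, into the two defining conditions of (\ref{K-space}). The linear equation $\sum_{i=1}^N x_i=2\pi\chi(M)$ is satisfied automatically, since $\sum_{i=1}^N K_{av}=NK_{av}=2\pi\chi(M)$, so this constraint imposes nothing. It remains to check, for each nonempty proper subset $A\subsetneqq V$, the half-space condition $K_{av}\mathds{1}\in Y_A^{\Phi}$ coming from (\ref{Y_I half space}). Computing the left-hand side gives $\sum_{i\in A}K_{av}=|A|K_{av}=2\pi\chi(M)|A|/|V|$, while the right-hand side is unchanged, so the membership condition becomes exactly inequality (\ref{condition-Thurston}). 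Thus $K_{av}\mathds{1}\in K(\mathds{R}^N_{>0})$, equivalently the existence of a constant curvature metric, holds if and only if (\ref{condition-Thurston}) is valid for every such $A$.

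For the uniqueness clause I would invoke the injectivity part of Theorem \ref{Thm-Andreev-thurston}: the curvature map restricted to $\{r\in\mathds{R}^N_{>0}\mid \prod_{i=1}^N r_i=1\}$ is injective, so any two metrics with the same curvature differ by a scalar multiple. Applying this to the common curvature value $K_{av}\mathds{1}$ shows that the constant curvature metric, when it exists, is unique up to a scalar multiplication.

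I expect no genuine analytic obstacle here; the statement is a corollary of Andreev-Thurston obtained by specializing to the single curvature vector $K_{av}\mathds{1}$. The only care needed is bookkeeping: confirming that the global sum constraint is met automatically (so that it drops out of the description), recording that $N=|V|$, and verifying that the substitution is carried through every half-space $Y_A^{\Phi}$ simultaneously rather than for one fixed $A$.
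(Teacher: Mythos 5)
Your proposal is correct and matches the paper's own derivation exactly: the paper likewise observes that existence of a constant curvature metric is equivalent to $K_{av}\mathds{1}\in K(\mathds{R}^N_{>0})$ and obtains (\ref{condition-Thurston}) by substituting $K_{av}\mathds{1}$ into (\ref{K-space}) and (\ref{Y_I half space}), with uniqueness from the injectivity clause of Theorem \ref{Thm-Andreev-thurston}. Your bookkeeping (the sum constraint holding automatically, $N=|V|$, and checking every $A$) is exactly the routine verification the paper leaves implicit.
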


In the following we consider the inversive distance circle packing case (with inversive distance $I\geq 0$). In this setting, the admissible
metric space is $\Omega$. Based on extensive numerical evidences, Bowers and Stephenson conjectured the rigidity of inversive distance circle packing in \cite{Bowers-Stephenson}. Guo \cite{Guoren} first proved that Bowers-Stephenson's conjecture of rigidity is locally true by complicated computations. Luo \cite{Luo1} proved Bowers-Stephenson's conjecture of rigidity eventually.

\begin{theorem}\label{Thm-Guoren-Luofeng} \textbf{(Guo-Luo)}
\;Given a triangulated surface $(M, \mathcal{T}, I)$ with inversive distance $I\geq 0$. The curvature map $K:\Omega\rightarrow\mathds{R}^N$ restricted to the subset $\big\{r\in\Omega\big|\prod_{i=1}^Nr_i=1\big\}$ is injective, i.e., the metric is determined by its curvature up to a scalar multiplication.
\end{theorem}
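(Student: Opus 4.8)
The plan is to upgrade Guo's \emph{local} rigidity (Lemma \ref{Lemma-Guo}) to the asserted \emph{global} rigidity by exploiting the fact that, although $\ln\Omega$ is not convex, the extended potential $\widetilde{F}$ is a $C^1$ convex function on the whole of $\mathds{R}^N$ (via Luo's Lemma \ref{lemma-luo-essential}). Transcribed into $u$-coordinates, the claim to prove is: if $u,\bar{u}\in\ln\Omega$ satisfy $\sum_i u_i=\sum_i \bar{u}_i$ and $K(u)=K(\bar{u})$, then $u=\bar{u}$. I would work throughout with $\widetilde{F}$, whose gradient is $\nabla\widetilde{F}=\widetilde{K}-K_{av}\mathds{1}$. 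Since $\widetilde{K}=K$ on $\ln\Omega$, and the two gradients differ from the two curvature vectors by the \emph{same} constant vector $K_{av}\mathds{1}$, the hypothesis $K(u)=K(\bar{u})$ immediately gives the equal-gradient condition $\nabla\widetilde{F}(u)=\nabla\widetilde{F}(\bar{u})$.

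The core is a monotonicity argument along the segment joining the two metrics. Set $\phi(t)=\widetilde{F}\big(u+t(\bar{u}-u)\big)$ for $t\in[0,1]$. Because $\widetilde{F}$ is convex on $\mathds{R}^N$, the function $\phi$ is convex and $C^1$ on $[0,1]$, with $\phi'(t)=\nabla\widetilde{F}\big(u+t(\bar{u}-u)\big)\cdot(\bar{u}-u)$. The equal-gradient condition yields $\phi'(0)=\phi'(1)$, and since convexity forces $\phi'$ to be nondecreasing, $\phi'$ must be constant; that is, $\phi$ is affine on $[0,1]$. The decisive point is that this segment argument remains valid even when the segment exits the non-convex region $\ln\Omega$, precisely because $\widetilde{F}$ is globally convex on $\mathds{R}^N$; this is exactly where Luo's extension of the angle $1$-form to all of $\mathds{R}^3$ (Lemma \ref{lemma-luo-essential}) is doing the real work.

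It then remains to localize at the endpoint $u$. Since $\ln\Omega$ is open, the point $u+t(\bar{u}-u)$ stays in $\ln\Omega$ for small $t\geq 0$, and there $\widetilde{F}$ is $C^\infty$, so $\phi''(0)=(\bar{u}-u)^T\,Hess_u\widetilde{F}\,(\bar{u}-u)$ is defined. By Proposition \ref{prop-HessF-tuta-smooth}, $Hess_u\widetilde{F}$ is positive semi-definite with null space exactly $\{t\mathds{1}\mid t\in\mathds{R}\}$, so $\phi''(0)>0$ unless $\bar{u}-u\in\mathrm{span}\{\mathds{1}\}$. But $\phi$ is affine, so $\phi''(0)=0$, which forces $\bar{u}-u=c\,\mathds{1}$ for some constant $c$; in $r$-coordinates this is exactly the scaling ambiguity $\bar{r}=e^{c}r$. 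The normalization $\sum_i u_i=\sum_i \bar{u}_i$ (i.e. $\prod_i r_i=\prod_i \bar{r}_i=1$) then gives $c=0$, whence $u=\bar{u}$, establishing injectivity on the slice.

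The main obstacle is conceptual rather than computational: the non-convexity of $\ln\Omega$ is precisely what makes Bowers-Stephenson's rigidity conjecture genuinely harder than the intersecting-circles case, where the domain is all of $\mathds{R}^N_{>0}$ and local rigidity integrates directly to a strictly convex global potential. The entire strategy hinges on having a globally convex $C^1$ extension so that the chord argument survives excursions outside $\ln\Omega$, and the strict local convexity of Proposition \ref{prop-HessF-tuta-smooth} is only invoked at the smooth endpoint to rule out the affine degeneracy. Producing that convex extension is the substantive input, and it is supplied by Luo's Lemma \ref{lemma-luo-essential}, which I am assuming; granted that, the rigidity itself follows cleanly from convexity together with the transverse positive-definiteness of the Hessian.
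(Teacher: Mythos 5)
Your proof is correct and is essentially the approach the paper itself relies on: although the paper states Theorem \ref{Thm-Guoren-Luofeng} as a citation to Guo \cite{Guoren} and Luo \cite{Luo1}, the identical mechanism---Luo's globally convex $C^1$ extension $\widetilde{F}$ from Lemma \ref{lemma-luo-essential}, the chord argument forcing the potential to be affine along the segment joining two metrics with equal curvature (valid even where the segment leaves the non-convex set $\ln\Omega$), and the kernel characterization $\ker\big(Hess_u\widetilde{F}\big)=\{t\mathds{1}\,|\,t\in\mathds{R}\}$ of Proposition \ref{prop-HessF-tuta-smooth} to exclude the affine degeneracy---appears verbatim in Step 2 of the proof of Theorem \ref{thm-angle-range-one-triangle} and underlies Corollary \ref{corollary-unique-K-average} and Theorem \ref{corollary-unique-K-bar}. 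Your single (harmless) variation is to close the argument via $\phi''(0)=0$ and the Hessian kernel at the smooth endpoint, where the paper instead projects the second point onto the hyperplane $\Pi_p$ perpendicular to $\mathds{1}$ and invokes strict convexity of the potential restricted to that slice; the two finishes are interchangeable.
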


Theorem \ref{Thm-Guoren-Luofeng} confirms Bowers and Stephenson's rigidity conjecture. It's a generalization of the uniqueness part of Andreev-Thurston's Theorem \ref{Thm-Andreev-thurston} to the inversive distance setting. However, the generalization of the existence part still remains unresolved. In the following of this section, we will give a partial answer to the existence part of Andreev-Thurston theorem in Bowers-Stephenson's inversive distance setting. We follow the approach pioneered by Marden and Rodin \cite{Marden-Rodin}. Denote
\begin{equation}\label{def-Y-A}
Y_A\triangleq\Big\{x\in \mathds{R}^N \Big|\sum_{i\in A}x_i >-\sum_{(e,v)\in Lk(A)}\big(\pi-\Lambda(I_e)\big)+2\pi\chi(F_A)\Big\},
\end{equation}
for any nonempty proper subset $A\subset V$, where
\begin{equation}
\Lambda(x)=
\begin{cases}
\;\;\;\;\;\pi, & \text{$x \leq -1$.}\\
\arccos x, & \text{$-1\leq x \leq 1$.}\\
\;\;\;\;\;0,& \text{$x\geq 1$.}
\end{cases}
\end{equation}
Note that $\Lambda$ is continuous on $\mathds{R}$ and $\Lambda(-x)=\pi-\Lambda(x)$ for each $x\in \mathds{R}$. We have the following main theorem in this section:

\begin{theorem}\label{Thm-2}
Given a triangulated surface $(M, \mathcal{T}, I)$ with inversive distance $I\geq 0$. Then the space of all possible discrete Gaussian curvatures $K(\Omega)$ is contained in the following convex set
\begin{equation}\label{convex-set}
\Big\{x\in \mathds{R}^N\Big|\sum_{i\in V}x_i=2\pi\chi(M)\Big\}\bigcap\Big(\mathop{\bigcap} _{\phi\neq A\subsetneqq V} Y_A \Big).
\end{equation}
\end{theorem}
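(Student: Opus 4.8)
The plan is to show that every $K=K(r)$ with $r\in\Omega$ satisfies the two families of constraints defining the set in (\ref{convex-set}), treated separately. Membership in the hyperplane $\{\sum_{i\in V}x_i=2\pi\chi(M)\}$ is exactly the discrete Gauss--Bonnet formula (\ref{Gauss-Bonnet}), so the entire content is to prove $K\in Y_A$ for every nonempty proper subset $A\subsetneq V$, that is,
\[
\sum_{i\in A}K_i>-\sum_{(e,v)\in Lk(A)}\big(\pi-\Lambda(I_e)\big)+2\pi\chi(F_A).
\]

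First I would classify each face $\{ijk\}\in F$ by how many of its vertices lie in $A$, writing $n_1,n_2,n_3$ for the number of faces with exactly one, two, three vertices in $A$, and $E_A$ for the number of edges with both endpoints in $A$. Starting from $K_i=2\pi-\sum_{\{ijk\}}\theta_i^{jk}$ and regrouping the double sum $\sum_{i\in A}\sum_{\{ijk\}}\theta_i^{jk}$ by type, a type-$3$ face contributes $\pi$, a type-$2$ face contributes $\pi-\theta^{\mathrm{out}}$ (the angle at its unique vertex outside $A$), and a type-$1$ face contributes the single angle $\theta^{\mathrm{in}}$ at its vertex in $A$. Crucially, type-$1$ faces are in bijection with $Lk(A)$: the inside vertex is $v$ and the opposite edge $e$ has both endpoints outside $A$. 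This yields the identity
\[
\sum_{i\in A}K_i=2\pi|A|-\pi n_3-\pi n_2+\sum_{\text{type }2}\theta^{\mathrm{out}}-\sum_{\text{type }1}\theta^{\mathrm{in}}.
\]

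The geometric heart of the argument is the angle estimate $\theta_i^{jk}<\pi-\Lambda(I_{jk})$ for every admissible triangle, and this is the step I expect to carry the real weight. I would derive it from the law of cosines: a direct computation gives
\[
\cos\theta_i^{jk}+I_{jk}=\frac{r_i^2+r_ir_jI_{ij}+r_ir_kI_{ik}+I_{jk}\big(l_{ij}l_{ik}-r_jr_k\big)}{l_{ij}l_{ik}}.
\]
Since $I\geq 0$ forces $l_{ij}\geq r_j$ and $l_{ik}\geq r_k$, every summand in the numerator is nonnegative and the numerator is at least $r_i^2>0$; hence $\cos\theta_i^{jk}>-I_{jk}$, which is precisely $\theta_i^{jk}<\pi-\Lambda(I_{jk})$ when $I_{jk}\in[0,1]$ and is trivial when $I_{jk}>1$. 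This is the only place the hypothesis $I\geq 0$ is used, and it is what makes the inversive-distance bound behave like the classical intersection-angle bound. Applying it to each type-$1$ face gives $\theta^{\mathrm{in}}<\pi-\Lambda(I_e)$ for the corresponding $(e,v)\in Lk(A)$.

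Finally I would clear the combinatorial terms. Counting incidences between faces and edges lying inside $A$, and using that each such edge sits in exactly two faces of the closed surface, gives $2E_A=3n_3+n_2$, whence $2\pi|A|-\pi n_3-\pi n_2=2\pi(|A|-E_A+n_3)=2\pi\chi(F_A)$. Substituting into the identity above and rearranging, the target reduces to
\[
\sum_{i\in A}K_i-\Big[2\pi\chi(F_A)-\!\!\sum_{(e,v)\in Lk(A)}\!\!\big(\pi-\Lambda(I_e)\big)\Big]=\sum_{\text{type }2}\theta^{\mathrm{out}}+\sum_{\text{type }1}\big[(\pi-\Lambda(I_e))-\theta^{\mathrm{in}}\big],
\]
a sum of $n_2$ strictly positive angle terms and $n_1$ strictly positive terms from the angle estimate. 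It is positive as soon as $n_1+n_2\geq 1$; and since $M$ is connected and $A$ is a nonempty proper subset, some edge joins $A$ to its complement, so the two faces containing it force $n_1+n_2\geq 1$. This gives the strict inequality $K\in Y_A$ and completes the verification.
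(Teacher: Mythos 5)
Your proof is correct, and its combinatorial skeleton coincides with the paper's: the same classification of faces by the number of vertices in $A$, the same bijection between type-$1$ faces and $Lk(A)$, and the same incidence count $2E_A=3n_3+n_2$ yielding $2\pi|A|-\pi n_2-\pi n_3=2\pi\chi(F_A)$. Where you genuinely diverge is in the key angle estimate $\theta_i^{jk}<\pi-\Lambda(I_{jk})$ (Claim \ref{claim-1} in the paper). The paper proves it by a monotonicity-plus-degeneration argument: it invokes Guo's Lemma \ref{Lemma-Guo} to get $\frac{\partial \theta_i}{\partial u_i}\leq 0$, computes the limit $\theta_i\rightarrow\pi-\Lambda(I_{jk})$ as $r_i$ tends to a degenerate radius $\bar r_i$ (the root of equation (\ref{jk-i-cone-surface}) when $I_{jk}>1$, and $\bar r_i=0$ otherwise), and then excludes equality by a separate law-of-sines rigidity argument. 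You instead obtain the strict bound in one stroke from the law of cosines: your identity $\cos\theta_i^{jk}+I_{jk}=\big(r_i^2+r_ir_jI_{ij}+r_ir_kI_{ik}+I_{jk}(l_{ij}l_{ik}-r_jr_k)\big)/(l_{ij}l_{ik})$ checks out, and since $I\geq 0$ gives $l_{ij}\geq r_j$ and $l_{ik}\geq r_k$, the numerator is at least $r_i^2>0$, so $\cos\theta_i^{jk}>-I_{jk}$; this is in fact the same explicit expression the paper itself deploys only later, in Lemma \ref{Lemma-limit-xita}. Your route is more elementary for this theorem (no appeal to Guo's variational lemma) and isolates exactly where the hypothesis $I\geq 0$ enters; the paper's longer route additionally exhibits the sharpness of the bound $\pi-\Lambda(I_{jk})$ in the degenerate limit, which it needs later for Proposition \ref{degenerate} and Theorem \ref{thm-angle-range-one-triangle}, but which is irrelevant to the containment statement. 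Two smaller refinements on your side: you package the bookkeeping as an exact identity with explicit positive remainder $\sum_{\text{type }2}\theta^{\mathrm{out}}+\sum_{\text{type }1}\big[(\pi-\Lambda(I_e))-\theta^{\mathrm{in}}\big]$ rather than a chain of inequalities, and you actually justify $n_1+n_2\geq 1$ via connectivity of the $1$-skeleton, a point the paper asserts (``$A_1$, $A_2$ can't be empty at the same time'') without proof.
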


\begin{proof}
We just need to prove
\begin{equation*}
\sum_{i\in A}K_i >-\sum_{(e,v)\in Lk(A)}\big(\pi-\Lambda(I_e)\big)+2\pi\chi(F_A).
\end{equation*}
First, we prove
\begin{claim}\label{claim-1}
For each $(r_1,r_2,r_3)^T\in\Delta$ and $\{i,j,k\}=\{1,2,3\}$, $0<\theta_i<\pi-\Lambda(I_{jk})$.
\end{claim}
For this, we just need to prove $\theta_i(r_i, \bar{r}_j, \bar{r}_k)<\pi-\Lambda(I_{jk})$ for any fixed $\bar{r}_j, \bar{r}_k\in (0,+\infty)$. If $I_{jk}\in[0,1]$, set $\bar{r}_i=0$. If $I_{jk}>1$, let $\bar{r}_i>0$ be the unique solution of equation
\begin{equation}\label{jk-i-cone-surface}
\sqrt{r_i^2+\bar{r}_j^2+2r_i\bar{r}_jI_{ij}}+\sqrt{r_i^2+\bar{r}_k^2+2r_i\bar{r}_kI_{ik}}=\sqrt{\bar{r}_j^2+\bar{r}_k^2+2\bar{r}_j\bar{r}_kI_{jk}}.
\end{equation}
That equation (\ref{jk-i-cone-surface}) has a unique positive solution $\bar{r}_i$ can be seen as follows. Let
\begin{equation*}
f(r_i)=\sqrt{r_i^2+\bar{r}_j^2+2r_i\bar{r}_jI_{ij}}+\sqrt{r_i^2+\bar{r}_k^2+2r_i\bar{r}_kI_{ik}}-\sqrt{\bar{r}_j^2+\bar{r}_k^2+2\bar{r}_j\bar{r}_kI_{jk}}.
\end{equation*}
It's easy to see $f(0)<0$, $f(+\infty)=+\infty$ and $f'(r_i)>0$. Then equation (\ref{jk-i-cone-surface}) has a unique positive solution $\bar{r}_i$.

On one hand, by the law of cosines,
$$\cos \theta_i=\cfrac{l_{ij}^2+l_{ik}^2-l_{jk}^2}{2l_{ij}l_{ik}},$$
and taking limit, we get
\begin{equation}\label{xita-i-1}
\lim_{r_i\rightarrow \bar{r}_i}\theta_i(r_i, \bar{r}_j, \bar{r}_k)=\pi-\Lambda(I_{jk}).
\end{equation}
On the other hand, by Lemma \ref{Lemma-Guo}, $\frac{\partial \theta_i}{\partial r_i}r_i=\frac{\partial \theta_i}{\partial u_i}\leq 0$, which implies that $\theta_i$ is a decreasing function of $r_i$. Hence
$$\theta_i(r_i, \bar{r}_j, \bar{r}_k)\leq\pi-\Lambda(I_{jk}).$$

Next we show the equality can never be achieved. If $\theta_i(a, \bar{r}_j, \bar{r}_k)=\pi-\Lambda(I_{jk})$ at some point $a$ with $a>\bar{r}_i$ and $(a, \bar{r}_j, \bar{r}_k)\in \Delta_r$, then $\theta_i(r_i, \bar{r}_j, \bar{r}_k)\equiv\pi-\Lambda(I_{jk})$ on interval $(\bar{r}_i, a]$. By the law of sines,
$$\cfrac{l_{jk}}{\sin \theta_i}=\cfrac{l_{ij}}{\sin \theta_k}=\cfrac{l_{ik}}{\sin \theta_j}.$$
As $r_i$ increases in the interval $(\bar{r}_i, a]$, $l_{ij}$ and $l_{ik}$ increase and hence $\sin \theta_k$ and $\sin \theta_j$ increase.
Note that $\theta_i\equiv\pi-\Lambda(I_{jk})\geq\frac{\pi}{2}$, which implies that both $\theta_k$ and $\theta_j$ are in $(0, \frac{\pi}{2})$. Therefore both $\theta_k$ and $\theta_j$ are increase. However, $\theta_i+\theta_j+\theta_k=\pi$. Now we get a contradiction, which means that $\theta_i(a, \bar{r}_j, \bar{r}_k)$ never equals to $\pi-\Lambda(I_{jk})$. Hence $0<\theta_i<\pi-\Lambda(I_{jk})$.

Now we begin the proof of the theorem. Consider all the triangles in $F$ having a vertex in $A$. These triangles can be classified into three types $A_1$, $A_2$ and $A_3$. For each $i\in\{1,2,3\}$, a triangle is in $A_i$ if and only if it has exactly $i$ many vertices in $A$. Set $a_i$ as the cone angle at vertex $i$, i.e.,
$a_i=\sum_{\{ijk\} \in F}\theta_i^{jk}$. On one hand, by Claim \ref{claim-1},
\begin{equation*}\label{A-1-inequal}
\sum_{i\in A, \{ijk\}\in A_1} \theta_i^{jk}<\sum_{(e,v)\in Lk(A)}\big(\pi-\Lambda(I_e)\big).
\end{equation*}
On the other hand,
\begin{equation*}\label{A-2-inequal}
\sum_{i,j\in A, \{ijk\}\in A_2} (\theta_i^{jk}+\theta_j^{ik})<\pi|A_2|.
\end{equation*}
Note that $A_1$, $A_2$ can't be empty at the same time, hence
\begin{equation*}
\begin{aligned}
\sum_{i\in A}K_i=&\sum_{i\in A}(2\pi-a_i)\\
=&2\pi|A|-\sum_{i\in A}a_i\\
=&2\pi|A|-\Big(\sum_{i\in A, \{ijk\}\in A_1} \theta_i^{jk}+\sum_{i,\;j\in A, \{ijk\}\in A_2} (\theta_i^{jk}+\theta_j^{ik})+
\sum_{\{ijk\}\in A_3} (\theta_i^{jk}+\theta_j^{ik}+\theta_k^{ij})\Big)\\[6pt]
>&2\pi|A|-\sum_{(e,v)\in Lk(A)}\big(\pi-\Lambda(I_e)\big)-\pi|A_2|-\pi|A_3|\\
=&-\sum_{(e,v)\in Lk(A)}\big(\pi-\Lambda(I_e)\big)+2\pi\Big(|A|-\frac{|A_2|}{2}-\frac{|A_3|}{2}\Big)\\
=&-\sum_{(e,v)\in Lk(A)}\big(\pi-\Lambda(I_e)\big)+2\pi\chi(F_A).
\end{aligned}
\end{equation*}\qed
\end{proof}

\begin{corollary}\label{crollary}
Given a triangulated surface $(M, \mathcal{T}, I)$ with inversive distance $I\geq 0$. Assuming there exists a metric of constant curvature, then the following combinatorial-topological conditions holds for each nonempty proper subset $A\subset V$,
\begin{equation}\label{condition-constant-generalized}
2\pi\chi(M)\frac{|A|}{|V|} >-\sum_{(e,v)\in Lk(A)}\big(\pi-\Lambda(I_e)\big)+2\pi\chi(F_A).
\end{equation}\qed
\end{corollary}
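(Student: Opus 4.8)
The plan is to read this off directly from Theorem \ref{Thm-2}, since the corollary is precisely the specialization of that theorem to the constant curvature point. First I would translate the hypothesis into a membership statement: saying that a metric of constant curvature exists in $\Omega$ means there is some $r^*\in\Omega$ with $K_i(r^*)=K_{av}$ for every $i\in V$, where $K_{av}=2\pi\chi(M)/|V|$ by the discrete Gauss-Bonnet formula (\ref{Gauss-Bonnet}). Hence the single vector $K_{av}\mathds{1}$ belongs to $K(\Omega)$.

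Next I would invoke Theorem \ref{Thm-2}, which asserts that $K(\Omega)$ is contained in the convex set (\ref{convex-set}); in particular $K(\Omega)\subset Y_A$ for every nonempty proper subset $A\subsetneq V$. Combining this with the previous step, the point $K_{av}\mathds{1}$ lies in $Y_A$ for each such $A$. It now only remains to unwind the definition (\ref{def-Y-A}) of $Y_A$ at $x=K_{av}\mathds{1}$: the left-hand side reads
\begin{equation*}
\sum_{i\in A}K_{av}=|A|\,K_{av}=|A|\cdot\frac{2\pi\chi(M)}{|V|}=2\pi\chi(M)\frac{|A|}{|V|},
\end{equation*}
so the membership $K_{av}\mathds{1}\in Y_A$ is literally the desired inequality (\ref{condition-constant-generalized}). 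Since $A$ was an arbitrary nonempty proper subset, this finishes the proof.

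There is essentially no obstacle here; the entire analytic content sits inside Theorem \ref{Thm-2} (which in turn rests on Claim \ref{claim-1} and the angle estimate $0<\theta_i<\pi-\Lambda(I_{jk})$), and the corollary is a bookkeeping substitution. If one preferred a self-contained argument avoiding the appeal to the full Theorem \ref{Thm-2}, I would instead rerun the summation estimate from the proof of that theorem verbatim, classifying the triangles meeting $A$ into the types $A_1,A_2,A_3$ and bounding $\sum_{i\in A}\theta_i$, but now using the identity $\sum_{i\in A}K_i=\sum_{i\in A}K_{av}=2\pi\chi(M)|A|/|V|$ in place of the generic curvature sum; the same chain of inequalities then yields (\ref{condition-constant-generalized}). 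The direct route via Theorem \ref{Thm-2} is cleaner, so that is the one I would present.
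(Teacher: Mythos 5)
Your proposal is correct and matches the paper's approach: the paper states the corollary with no further argument precisely because it is the substitution $x=K_{av}\mathds{1}\in K(\Omega)\subset Y_A$ from Theorem \ref{Thm-2}, using $\sum_{i\in A}K_{av}=2\pi\chi(M)|A|/|V|$ exactly as you do. Your bookkeeping is accurate, so nothing more is needed.
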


\begin{lemma}\label{Lemma-limit-xita}
Assuming $b, c\in(0, +\infty]$, then in the generalized Euclidean triangle $\triangle 123$,
\begin{equation}
\lim_{(r_i, r_j, r_k)\rightarrow (0,\,b,\,c)}\tilde{\theta}_i(r_i, r_j, r_k)=\pi-\Lambda(I_{jk}).
\end{equation}
\begin{equation}
\lim_{(r_i,r_j,r_k)\rightarrow (0,\,0,\,c)}\tilde{\theta}_k(r_i, r_j, r_k)=0.
\end{equation}
where $\{i,j,k\}=\{1,2,3\}$.
\end{lemma}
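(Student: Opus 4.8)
The plan is to compute both limits directly from the law of cosines, treating the two cases $I_{jk}\in[0,1]$ and $I_{jk}>1$ separately, exactly as in the proof of Claim \ref{claim-1}. For the first limit, I would start from
$$\cos\tilde{\theta}_i=\frac{l_{ij}^2+l_{ik}^2-l_{jk}^2}{2l_{ij}l_{ik}},$$
valid whenever the triangle inequalities hold. Substituting the explicit edge lengths from formula (\ref{Def-edge-length}) and letting $r_i\to 0$ with $r_j\to b$, $r_k\to c$, the leading $r_i^2$ terms cancel in the numerator while the cross-terms $2r_ir_jI_{ij}$ and $2r_ir_kI_{ik}$ become negligible compared to $l_{ij}l_{ik}\to bc$. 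One then checks that the ratio tends to $-I_{jk}$, so $\cos\tilde{\theta}_i\to -I_{jk}=\cos(\pi-\Lambda(I_{jk}))$ when $I_{jk}\in[0,1]$.

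The subtlety is that when $I_{jk}>1$, the three edge lengths at $(0,b,c)$ violate the triangle inequality: one has $l_{jk}>l_{ij}+l_{ik}$ in the limit, so the generalized angle $\tilde{\theta}_i$ is governed by the degenerate branch of the definition rather than by the law of cosines. Here I would invoke the continuity of $\tilde{\theta}$ (the Luo lemma guaranteeing $\tilde{\theta}$ is continuous on $\mathds{R}^3_{>0}$ with $\tilde{\theta}_1+\tilde{\theta}_2+\tilde{\theta}_3=\pi$), together with the monotonicity $\partial\theta_i/\partial r_i\le 0$ from Lemma \ref{Lemma-Guo}, to argue that $\tilde{\theta}_i$ approaches its supremum value $\pi$; since $\Lambda(I_{jk})=0$ for $I_{jk}>1$, this again gives $\pi-\Lambda(I_{jk})=\pi$. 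The two cases therefore unify into the single stated formula, and the continuity of $\Lambda$ ensures the boundary value $I_{jk}=1$ (where $\Lambda(1)=0$) matches both one-sided analyses.

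For the second limit, with both $r_i\to 0$ and $r_j\to 0$ while $r_k\to c>0$, I would observe that $l_{ij}\to 0$ while $l_{ik}\to c$ and $l_{jk}\to c$. Thus the edge $\{ij\}$ opposite to vertex $k$ collapses, forcing the angle $\tilde{\theta}_k$ at the vertex facing this vanishing edge to tend to $0$. Concretely, from $\cos\tilde{\theta}_k=(l_{ik}^2+l_{jk}^2-l_{ij}^2)/(2l_{ik}l_{jk})$ the right side tends to $(c^2+c^2-0)/(2c^2)=1$, hence $\tilde{\theta}_k\to 0$; the triangle inequalities hold in a neighborhood of this limit (since the two large sides are comparable and the third is small), so the law of cosines applies throughout and no degenerate branch intervenes.

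The main obstacle will be the rigorous handling of the first limit in the separated case $I_{jk}>1$, where the naive law-of-cosines computation formally produces $\cos\tilde\theta_i<-1$ and is therefore invalid; the limit must instead be justified through the definition of the generalized angle together with the monotonicity and continuity lemmas, rather than by direct substitution. The allowance $b,c\in(0,+\infty]$ adds a minor technical wrinkle, since one of $r_j,r_k$ may tend to $+\infty$; in that case I would normalize by the larger radius before passing to the limit so that the relevant ratios remain controlled, but the qualitative conclusion is unchanged.
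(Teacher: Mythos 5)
Your overall strategy (law of cosines plus the function $\Lambda$) is in the right spirit, but two of your justifications fail as stated, and the one in the second limit is a genuine gap. Your claim that ``the triangle inequalities hold in a neighborhood of this limit'' near $(0,0,c)$ is false in general. Take $I_{ij}=I_{jk}=0$, $I_{ik}=10$, $r_k=1$, $r_i=\epsilon$, $r_j=\epsilon^2$: then $l_{ik}=\sqrt{\epsilon^2+1+20\epsilon}\approx 1+10\epsilon$ while $l_{ij}+l_{jk}\approx \epsilon+1$, so $l_{ik}>l_{ij}+l_{jk}$ and the triangle is degenerate along this entire sequence; here $\tilde\theta_k=0$ by the degenerate branch of the definition, not by the law of cosines. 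So ``no degenerate branch intervenes'' is wrong whenever some inversive distance exceeds $1$ and the two collapsing radii shrink at different rates, and your computation of $\cos\tilde\theta_k\to 1$ covers only part of the approach region. In the first limit, your treatment of $I_{jk}>1$ leans on the monotonicity $\partial\theta_i/\partial u_i\le 0$ from Lemma \ref{Lemma-Guo}, which is established only on $\Delta$, i.e.\ exactly where the configuration is nondegenerate, so it says nothing about $\tilde\theta_i$ on the degenerate region; the correct and much simpler observation is that for $I_{jk}>1$ one has $l_{jk}\ge l_{ij}+l_{ik}$ identically near the limit point, whence $\tilde\theta_i\equiv\pi$ there. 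Your appeal to continuity of $\Lambda$ at $I_{jk}=1$ is also the wrong continuity: the limit is taken in $r$ for fixed $I_{jk}$, and at $I_{jk}=1$ the approach region genuinely mixes both branches.

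The paper repairs all of this with a single identity that your case analysis is circling around: for \emph{every} $(r_1,r_2,r_3)\in\mathds{R}^3_{>0}$, degenerate or not, $\tilde\theta_i=\Lambda\bigl(\frac{l_{ik}^2+l_{ij}^2-l_{jk}^2}{2l_{ik}l_{ij}}\bigr)$, because the three branches of the definition of $\tilde\theta_i$ correspond exactly to the argument lying in $(-1,1)$, $(-\infty,-1]$ and $[1,+\infty)$, on which $\Lambda$ returns $\arccos$, $\pi$ and $0$ respectively. With this, both limits become one substitution of the edge lengths (\ref{Def-edge-length}) followed by continuity of $\Lambda$ on all of $\mathds{R}$: the argument tends to $-I_{jk}$ in the first limit, giving $\Lambda(-I_{jk})=\pi-\Lambda(I_{jk})$, and to $1$ in the second, giving $\Lambda(1)=0$; no case split on $I_{jk}$, no monotonicity, and no neighborhood triangle-inequality claim is needed. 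I recommend you restructure the proof around this identity; your normalization device for $b$ or $c$ equal to $+\infty$ then applies verbatim to the $\Lambda$-argument and is the only extra check required.
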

\begin{proof} For any $(r_1,r_2,r_3)\in\mathds{R}^3_{>0}$, we have $l_{12}, l_{13}, l_{23}>0$. Now let $\{i,j,k\}=\{1,2,3\}$. If $l_{ij}, l_{ik}, l_{jk}$ satisfy the triangle inequality, which is equivalent to $-1<\frac{l_{ik}^2+l_{ij}^2-l_{jk}^2}{2l_{ik}l_{ij}}<1$, then $\tilde{\theta}_i=\theta_i$. If $l_{jk}\geq l_{ij}+l_{ik}$, which is equivalent to $\frac{l_{ik}^2+l_{ij}^2-l_{jk}^2}{2l_{ik}l_{ij}}\leq -1$, then $\tilde{\theta}_i=\pi$. Else the only left case is $l_{jk}\leq |l_{ij}-l_{ik}|$, which is equivalent to $\frac{l_{ik}^2+l_{ij}^2-l_{jk}^2}{2l_{ik}l_{ij}}\geq 1$, then $\tilde{\theta}_i=0$. Above all, we get
\begin{equation}
\tilde{\theta}_i(r_i, r_j, r_k)=\Lambda\bigg(\frac{l_{ik}^2+l_{ij}^2-l_{jk}^2}{2l_{ik}l_{ij}}\bigg).
\end{equation}
Hence
\begin{equation*}
\begin{aligned}
\tilde{\theta}_i(r_i, r_j, r_k)=&\Lambda\bigg(\frac{l_{ik}^2+l_{ij}^2-l_{jk}^2}{2l_{ik}l_{ij}}\bigg)\\
                               =&\Lambda\left(\frac{r_{i}^2+r_{i}r_{k}I_{ik}+r_{i}r_{j}I_{ij}-r_{j}r_{k}I_{jk}}{\sqrt{r_{i}^2+r_{k}^2+2r_{i}r_{k}I_{ik}}
                               \sqrt{r_{i}^2+r_{j}^2+2r_{i}r_{j}I_{ij}}}\right)\rightarrow & \Lambda(-I_{jk})=\pi-\Lambda(I_{jk})
\end{aligned}
\end{equation*}
as $(r_i, r_j, r_k)\rightarrow (0,\,b,\,c)$, while
\begin{equation*}
\begin{aligned}
\tilde{\theta}_k(r_i, r_j, r_k)=&\Lambda\bigg(\frac{l_{ik}^2+l_{jk}^2-l_{ij}^2}{2l_{ik}l_{jk}}\bigg)\\
                               =&\Lambda\left(\frac{r_{k}^2+r_{i}r_{k}I_{ik}+r_{j}r_{k}I_{jk}-r_{i}r_{j}I_{ij}}{\sqrt{r_{i}^2+r_{k}^2+2r_{i}r_{k}I_{ik}}
                               \sqrt{r_{j}^2+r_{k}^2+2r_{j}r_{k}I_{jk}}}\right)\rightarrow & \Lambda(1)=0
\end{aligned}
\end{equation*}
as $(r_i, r_j, r_k)\rightarrow (0,\,0,\,c)$.\qed
\end{proof}

\begin{proposition}\label{degenerate}
Given a triangulated surface $(M, \mathcal{T}, I)$ with inversive distance $I\geq 0$. Assuming there is a sequence of
$r^{(n)}=\big(r_1^{(n)},...,r_N^{(n)}\big)^T\in \mathds{R}^N_{>0}$ and a nonempty proper subset $A\subset V$, so that
$\lim\limits_{n\rightarrow+\infty}r_i^{(n)}=0$ for $i\in A$ and $\lim\limits_{n\rightarrow+\infty}r_i^{(n)}>0$ (may be $+\infty$) for $i\notin A$, then
\begin{equation}\label{limit-singular-behavior}
\lim\limits_{n\rightarrow+\infty}\sum_{i\in \,A}\widetilde{K}_i(r^{(n)})=-\sum_{(e,v)\in Lk(A)}(\pi-\Lambda(I_e))+2\pi\chi(F_A).
\end{equation}
\end{proposition}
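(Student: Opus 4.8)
The plan is to compute the limit in (\ref{limit-singular-behavior}) by partitioning the curvature sum $\sum_{i\in A}\widetilde{K}_i(r^{(n)})$ according to how the triangles adjacent to $A$ degenerate, and then applying the explicit angle limits from Lemma \ref{Lemma-limit-xita}. First I would write $\sum_{i\in A}\widetilde{K}_i = 2\pi|A| - \sum_{i\in A}\tilde{a}_i$, where $\tilde{a}_i = \sum_{\{ijk\}\in F}\tilde{\theta}_i^{jk}$ is the (generalized) cone angle at $i$, and then regroup the total angle contribution according to the three types $A_1, A_2, A_3$ of triangles having exactly one, two, or three vertices in $A$, exactly as in the proof of Theorem \ref{Thm-2}. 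This reduces the problem to computing, for each type, the limiting sum of the relevant $\tilde{\theta}$ as $n\to+\infty$.

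Next I would evaluate each type separately using Lemma \ref{Lemma-limit-xita}. For a triangle $\{ijk\}\in A_1$ with $i\in A$ and $j,k\notin A$, the radii satisfy $r_i^{(n)}\to 0$ while $r_j^{(n)}, r_k^{(n)}$ tend to positive (possibly infinite) limits, so the first part of Lemma \ref{Lemma-limit-xita} gives $\tilde{\theta}_i^{jk}\to \pi-\Lambda(I_{jk})$; summing over $A_1$ produces exactly the term $\sum_{(e,v)\in Lk(A)}(\pi-\Lambda(I_e))$. For a triangle $\{ijk\}\in A_3$ all three radii vanish, and since the angles always sum to $\pi$ the total angle contribution $\tilde{\theta}_i^{jk}+\tilde{\theta}_j^{ik}+\tilde{\theta}_k^{ij}$ is identically $\pi$, contributing $\pi|A_3|$ in the limit. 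The genuinely delicate case is $A_2$: here $\{ijk\}$ has $i,j\in A$ and $k\notin A$, so $r_i^{(n)}, r_j^{(n)}\to 0$ and $r_k^{(n)}\to c>0$; the second part of Lemma \ref{Lemma-limit-xita} gives $\tilde{\theta}_k^{ij}\to 0$, whence $\tilde{\theta}_i^{jk}+\tilde{\theta}_j^{ik}\to \pi$ by the angle-sum identity, contributing $\pi|A_2|$.

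Assembling these limits I would obtain
\begin{equation*}
\lim_{n\to+\infty}\sum_{i\in A}\widetilde{K}_i = 2\pi|A| - \sum_{(e,v)\in Lk(A)}(\pi-\Lambda(I_e)) - \pi|A_2| - \pi|A_3|,
\end{equation*}
and then I would use the combinatorial identity $|A| - \frac{|A_2|}{2} - \frac{|A_3|}{2} = \chi(F_A)$ (each edge of $F_A$ lies in one or two faces of $A_2\cup A_3$, and counting vertices, edges, and faces of the subcomplex $F_A$ yields its Euler characteristic) to rewrite the constant as $-\sum_{(e,v)\in Lk(A)}(\pi-\Lambda(I_e)) + 2\pi\chi(F_A)$, exactly the right-hand side of (\ref{limit-singular-behavior}). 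The main obstacle I anticipate is the $A_2$ analysis: one must be careful that the hypotheses of Lemma \ref{Lemma-limit-xita} are met along the given sequence (in particular that $k\notin A$ forces $r_k^{(n)}$ to stay bounded away from $0$), and that taking the limit commutes with the finite sum over faces, which is immediate here since $F$ is finite and each $\tilde{\theta}$ is continuous on $\mathds{R}^3_{>0}$ by the continuity of the generalized angle map. The verification of the Euler-characteristic bookkeeping is routine but should be stated explicitly to justify the final equality.
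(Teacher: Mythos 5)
Your proposal is correct and follows essentially the same route as the paper's own proof: the same decomposition of the angle sum over the triangle types $A_1$, $A_2$, $A_3$, the same application of both limits in Lemma \ref{Lemma-limit-xita} (with the $A_2$ case handled via $\tilde{\theta}_i^{jk}+\tilde{\theta}_j^{ik}=\pi-\tilde{\theta}_k^{ij}\to\pi$), and the same Euler-characteristic identity $\chi(F_A)=|A|-\frac{|A_2|}{2}-\frac{|A_3|}{2}$ already used in Theorem \ref{Thm-2}. Your added remarks on verifying the lemma's hypotheses for $k\notin A$ and on the edge-face counting behind the identity are correct refinements of details the paper leaves implicit.
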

\begin{proof}
The proof is similar with Theorem \ref{Thm-2}. Note that for generalized metric $r^{(n)}\in \mathds{R}^N_{>0}$, the topological triangles in $F$ may not Euclidean, it may be a generalized Euclidean triangle. However, the combinatorial structure is invariant since we fixed the triangulation. Consider all the topological triangles in $F$ having a vertex in $A$. These triangles can be classified into three types $A_1$, $A_2$ and $A_3$. For each $i\in\{1,2,3\}$, a triangle is in $A_i$ if and only if it has exactly $i$ many vertices in $A$. Let $\tilde{a}_i$ be the generalized cone angle at vertex $i$, i.e.,
$\tilde{a}_i=\sum_{\{ijk\} \in F}\tilde{\theta}_i^{jk}$. On one hand, by Lemma \ref{Lemma-limit-xita},
\begin{equation*}\label{A-1-inequal}
\sum_{i\in A, \{ijk\}\in A_1} \tilde{\theta}_i^{jk(n)}\rightarrow\sum_{(e,v)\in Lk(A)}\big(\pi-\Lambda(I_e)\big).
\end{equation*}
On the other hand,
\begin{equation*}\label{A-2-inequal}
\sum_{i,j\in A, \{ijk\}\in A_2} (\tilde{\theta}_i^{jk(n)}+\tilde{\theta}_j^{ik(n)})=\sum_{i,j\in A, \{ijk\}\in A_2} (\pi-\tilde{\theta}_k^{ij(n)})\rightarrow|A_2|\pi.
\end{equation*}
hence
\begin{equation*}
\begin{aligned}
\sum_{i\in A}\widetilde{K}_i^{(n)}
=&\sum_{i\in A}(2\pi-\tilde{a}_i^{(n)})\\
=&2\pi|A|-\sum_{i\in A}\tilde{a}_i^{(n)}=2\pi|A|-\\[2pt]
&\Big(\sum_{i\in A, \{ijk\}\in A_1} \tilde{\theta}_i^{jk(n)}+\sum_{i,\;j\in A, \{ijk\}\in A_2} (\tilde{\theta}_i^{jk}+\tilde{\theta}_j^{ik})^{(n)}+
\sum_{\{ijk\}\in A_3} (\tilde{\theta}_i^{jk}+\tilde{\theta}_j^{ik}+\tilde{\theta}_k^{ij})^{(n)}\Big)\\[6pt]
\rightarrow&2\pi|A|-\sum_{(e,v)\in Lk(A)}\big(\pi-\Lambda(I_e)\big)-|A_2|\pi-|A_3|\pi\\
=&-\sum_{(e,v)\in Lk(A)}\big(\pi-\Lambda(I_e)\big)+2\pi\chi(F_A).
\end{aligned}
\end{equation*}\qed
\end{proof}

\begin{theorem}\label{com-inequi-1}
Given a triangulated surface $(M, \mathcal{T}, I)$ with inversive distance $I\geq 0$. Then the space of all possible extended curvatures $\widetilde{K}(\mathds{R}^N_{>0})$ is contained in the closer of convex set (\ref{convex-set}).
\end{theorem}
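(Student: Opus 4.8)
The plan is to run the proof of Theorem \ref{Thm-2} again at the level of combinatorial bookkeeping, but with the genuine inner angles $\theta_i^{jk}$ replaced by the generalized angles $\tilde{\theta}_i^{jk}$ and, crucially, with the strict inequality of Claim \ref{claim-1} relaxed to a non-strict one — which is exactly the passage from $Y_A$ to its closure $\overline{Y_A}$. Since the extended Gauss--Bonnet identity (\ref{Gauss-Bonnet-extend}) already puts $\widetilde{K}(r)$ on the hyperplane $\{\sum_i x_i=2\pi\chi(M)\}$ for every $r\in\mathds{R}^N_{>0}$, it will suffice to establish, for each nonempty proper $A\subsetneqq V$, the single inequality
\begin{equation*}
\sum_{i\in A}\widetilde{K}_i(r)\ \geq\ -\sum_{(e,v)\in Lk(A)}\big(\pi-\Lambda(I_e)\big)+2\pi\chi(F_A),\qquad r\in\mathds{R}^N_{>0},
\end{equation*}
which says precisely that $\widetilde{K}(r)\in\overline{Y_A}$.

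The key step is the generalized analogue of Claim \ref{claim-1}: for every generalized metric $r\in\mathds{R}^N_{>0}$ and every face $\{ijk\}$,
\[
\tilde{\theta}_i^{jk}\ \leq\ \pi-\Lambda(I_{jk}).
\]
To prove it I would invoke the explicit representation extracted inside the proof of Lemma \ref{Lemma-limit-xita},
\[
\tilde{\theta}_i=\Lambda\!\left(\frac{r_i^2+r_ir_kI_{ik}+r_ir_jI_{ij}-r_jr_kI_{jk}}{l_{ik}l_{ij}}\right),
\]
together with the relation $\pi-\Lambda(I_{jk})=\Lambda(-I_{jk})$ and the fact that $\Lambda$ is non-increasing. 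The bound then reduces to checking that the argument of $\Lambda$ is at least $-I_{jk}$, i.e.
\[
r_i^2+r_ir_kI_{ik}+r_ir_jI_{ij}+I_{jk}\big(l_{ik}l_{ij}-r_jr_k\big)\ \geq\ 0.
\]
This is immediate once one observes that $l_{ik}=\sqrt{r_i^2+r_k^2+2r_ir_kI_{ik}}\geq r_k$ and $l_{ij}\geq r_j$ (using $I\geq 0$), so that $l_{ik}l_{ij}\geq r_jr_k$ and every summand is nonnegative. This is also where non-strictness is genuinely forced: in the degenerate regime $I_{jk}\geq 1$ one has $\pi-\Lambda(I_{jk})=\pi$ and $\tilde{\theta}_i=\pi$ can occur (compare Proposition \ref{degenerate}), so the law-of-sines argument used in Theorem \ref{Thm-2} to exclude equality is no longer available — and, for the closure statement, not needed.

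With this inequality in hand the proof closes exactly as in Theorem \ref{Thm-2}. I would classify the faces meeting $A$ into the three types $A_1,A_2,A_3$ according to how many of their vertices lie in $A$, and estimate $\sum_{i\in A}\tilde{a}_i$ term by term: the $A_1$ contribution is bounded above by $\sum_{(e,v)\in Lk(A)}(\pi-\Lambda(I_e))$ via the displayed angle bound, since the faces in $A_1$ biject with $Lk(A)$; each $A_2$ face contributes $\tilde{\theta}_i^{jk}+\tilde{\theta}_j^{ik}=\pi-\tilde{\theta}_k^{ij}\leq\pi$; and each $A_3$ face contributes exactly $\pi$. Summing and using the counting identity $\chi(F_A)=|A|-\tfrac12|A_2|-\tfrac12|A_3|$ (which follows from $2|E_A|=3|A_3|+|A_2|$ for a closed surface triangulation) yields the desired lower bound on $\sum_{i\in A}\widetilde{K}_i$, hence $\widetilde{K}(r)\in\overline{Y_A}$. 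Intersecting over all $A$ and with the Gauss--Bonnet hyperplane gives $\widetilde{K}(\mathds{R}^N_{>0})\subseteq\{\sum_i x_i=2\pi\chi(M)\}\cap\bigcap_A\overline{Y_A}$, the closure of the convex set (\ref{convex-set}).

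The main obstacle is conceptual rather than computational: one must recognize that the correct substitute for Claim \ref{claim-1} in the generalized setting is a non-strict inequality valid on all of $\mathds{R}^N_{>0}$, and that the explicit $\Lambda$-representation of $\tilde{\theta}_i$ renders it transparent with no case analysis on whether a given face is Euclidean or degenerate. The only point needing a word of care is the identification of $\{\sum_i x_i=2\pi\chi(M)\}\cap\bigcap_A\overline{Y_A}$ with the topological closure of (\ref{convex-set}), but this is routine since (\ref{convex-set}) is cut out by a single hyperplane together with finitely many open half-spaces.
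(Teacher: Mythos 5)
Your proposal is correct, and its combinatorial skeleton coincides with the paper's: reduction via the extended Gauss--Bonnet formula (\ref{Gauss-Bonnet-extend}) to the per-$A$ non-strict inequality, the $A_1,A_2,A_3$ classification of faces meeting $A$, the bound $\tilde{\theta}_i^{jk}+\tilde{\theta}_j^{ik}\leq\pi$ for $A_2$ faces, and the count $\chi(F_A)=|A|-\tfrac{1}{2}|A_2|-\tfrac{1}{2}|A_3|$. Where you genuinely differ is in the key angle bound (the paper's Claim \ref{claim-2}, that $0\leq\tilde{\theta}_i\leq\pi-\Lambda(I_{jk})$ on all of $\mathds{R}^3_{>0}$): the paper proves it by a case analysis, treating $I_{jk}\geq 1$ trivially and, for $0\leq I_{jk}<1$, observing $l_{jk}<l_{ij}+l_{ik}$ and then falling back on the strict Claim \ref{claim-1} in the Euclidean case (which itself rests on Guo's monotonicity Lemma \ref{Lemma-Guo} and a law-of-sines rigidity argument) and on $\tilde{\theta}_i=0$ in the degenerate cases. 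You instead repurpose the representation $\tilde{\theta}_i=\Lambda\big(\frac{l_{ik}^2+l_{ij}^2-l_{jk}^2}{2l_{ik}l_{ij}}\big)$, which the paper extracts only inside the proof of Lemma \ref{Lemma-limit-xita} for computing limits, and reduce the bound via $\pi-\Lambda(I_{jk})=\Lambda(-I_{jk})$ and monotonicity of $\Lambda$ to the termwise-nonnegative inequality $r_i^2+r_ir_kI_{ik}+r_ir_jI_{ij}+I_{jk}(l_{ik}l_{ij}-r_jr_k)\geq 0$, using $l_{ik}\geq r_k$ and $l_{ij}\geq r_j$, both consequences of $I\geq 0$; I checked the algebra and it is correct. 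Your route is uniform (no split into Euclidean versus degenerate faces), self-contained (no dependence on Claim \ref{claim-1} or Lemma \ref{Lemma-Guo}), and makes transparent why only non-strictness survives: equality enters exactly through the flat parts of $\Lambda$, as you note in connection with $I_{jk}\geq 1$. The paper's route, by contrast, piggybacks on Claim \ref{claim-1}, which is needed anyway for Theorem \ref{Thm-2} and Theorem \ref{thm-angle-range-one-triangle}, and records the sharper strict bound in the genuinely Euclidean case. One caveat you dismiss as routine: identifying $\{\sum_ix_i=2\pi\chi(M)\}\cap\bigcap_A\overline{Y_A}$ with the topological closure of (\ref{convex-set}) is standard (segment argument for relatively open convex sets) only when (\ref{convex-set}) is nonempty; but the paper's own proof elides exactly the same point, so this costs you nothing relative to it.
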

\begin{proof}
We need to prove that for every $r\in \mathds{R}^N_{>0}$, the extended curvature $\widetilde{K}$ satisfies
\begin{equation*}
\sum_{i\in \,A}\widetilde{K}_i(r)\geq-\sum_{(e,v)\in Lk(A)}(\pi-\Lambda(I_e))+2\pi\chi(F_A)
\end{equation*}
for each nonempty proper subset $A\subset V$. First we prove
\begin{claim}\label{claim-2}
For each $(r_1,r_2,r_3)^T\in\mathds{R}^3_{>0}$ and $\{i,j,k\}=\{1,2,3\}$, $0\leq\tilde{\theta}_i\leq\pi-\Lambda(I_{jk})$.
\end{claim}
In fact, for $I_{jk}\geq 1$ case, then obviously $0\leq\tilde{\theta}_i\leq\pi=\pi-\Lambda(I_{jk})$. Else for $0\leq I_{jk}<1$ case, it's easy to get $l_{jk}<l_{ij}+l_{ik}$. If further $l_{ik}<l_{ij}+l_{jk}$ and $l_{ij}<l_{ik}+l_{jk}$, then $\tilde{\theta}_i=\theta_i<\pi-\Lambda(I_{jk})$ by Claim \ref{claim-1}; If $l_{ik}\geq l_{ij}+l_{jk}$ or $l_{ij}\geq l_{ik}+l_{jk}$, then $\tilde{\theta}_i=0<\pi-\Lambda(I_{jk})$. Hence $\tilde{\theta}_i\leq\pi-\Lambda(I_{jk})$.\\

If $A_1$ is nonempty then $\tilde{\theta}^{jk}_i\leq\pi-\Lambda(I_{jk})$ by Claim \ref{claim-2} and hence
$$\sum_{i\in A, \{ijk\}\in A_1} \tilde{\theta}_i^{jk}\leq\sum_{(e,v)\in Lk(A)}\big(\pi-\Lambda(I_e)\big).$$
If $A_2$ is nonempty, then $\tilde{\theta}^{jk}_i+\tilde{\theta}^{ik}_j\leq\pi$ and hence
$$\sum_{i,j\in A, \{ijk\}\in A_2} (\tilde{\theta}_i^{jk}+\tilde{\theta}_j^{ik})\leq|A_2|\pi.$$
hence we always have
\begin{equation*}
\begin{aligned}
\sum_{i\in A}\widetilde{K}_i
=&2\pi|A|-
\Big(\sum_{i\in A, \{ijk\}\in A_1} \tilde{\theta}_i^{jk}+\sum_{i,\;j\in A, \{ijk\}\in A_2} (\tilde{\theta}_i^{jk}+\tilde{\theta}_j^{ik})+
\sum_{\{ijk\}\in A_3} (\tilde{\theta}_i^{jk}+\tilde{\theta}_j^{ik}+\tilde{\theta}_k^{ij})\Big)\\[6pt]
\geq&2\pi|A|-\sum_{(e,v)\in Lk(A)}\big(\pi-\Lambda(I_e)\big)-|A_2|\pi-|A_3|\pi\\
=&-\sum_{(e,v)\in Lk(A)}\big(\pi-\Lambda(I_e)\big)+2\pi\chi(F_A).
\end{aligned}
\end{equation*}\qed
\end{proof}

For a single triangle case, we can determine the shape of $K(\Omega)$ completely. Consider a triangle $\triangle123$ that is configured by three circles with three fixed non-negative numbers $I_{12}$, $I_{23}$ and $I_{13}$ as inversive distances. Recall the definition of the space of metrics $\Delta$ (in this setting $\Omega=\Delta$) and the angle map $\theta: \Delta\rightarrow \mathds{R}^3_{>0}$ in the beginning of Section \ref{section-prove}, we have
\begin{theorem}\label{thm-angle-range-one-triangle}
$\theta$ is a diffeomorphism from $\Delta'\triangleq\Delta\cap\big\{\prod_{i=1}^3r_i=1\big\}$ to $\mathcal{Z}$, where
\begin{equation*}\label{angle-range-asingletriangle}
\mathcal{Z}=\left\{(\theta_1, \theta_2, \theta_3)^T\in\mathds{R}^3\big|\theta_1+\theta_2+\theta_3=\pi; \,0<\theta_i<\pi-\Lambda(I_{jk}),\,\{i,j,k\}=\{1,2,3\}\right\}.
\end{equation*}
\end{theorem}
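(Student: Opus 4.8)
The plan is to verify in turn that $\theta|_{\Delta'}$ is a local diffeomorphism, that it is injective, and that its image is exactly $\mathcal{Z}$, and then to assemble these into the diffeomorphism claim. Passing to logarithmic coordinates $u_i=\ln r_i$, the constraint $\prod_i r_i=1$ becomes the linear slice $P=\{u\in\mathds{R}^3\mid u_1+u_2+u_3=0\}$, so $\Delta'$ corresponds to the open set $\Omega_P\triangleq\ln\Delta\cap P$, while the target $\mathcal{Z}$ lies in the affine plane $Q=\{\theta_1+\theta_2+\theta_3=\pi\}$; both $P$ and $Q$ are $2$-dimensional. That $\theta(\Delta')\subseteq\mathcal{Z}$ is immediate, since $\theta_1+\theta_2+\theta_3=\pi$ always holds and Claim \ref{claim-1} gives $0<\theta_i<\pi-\Lambda(I_{jk})$ on all of $\Delta$.

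For the local diffeomorphism I would invoke Guo's Lemma \ref{Lemma-Guo}: the Jacobian $J=\partial(\theta_1,\theta_2,\theta_3)/\partial(u_1,u_2,u_3)$ is symmetric, negative semi-definite, with kernel exactly $\mathds{R}\mathds{1}$. Because $\theta_1+\theta_2+\theta_3\equiv\pi$, every column of $J$ sums to zero, so the image of $J$ lies in the tangent plane $\{w:\sum w_i=0\}$ of $Q$; meanwhile the restriction of the negative semi-definite form $J$ to $P=\mathds{1}^{\perp}$ is negative definite, hence the induced map $d\theta_u\colon P\to TQ$ is an isomorphism at each $u\in\Omega_P$. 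Thus $\theta|_{\Delta'}$ is a smooth local diffeomorphism, and in particular an open map.

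Injectivity I would extract from the variational structure. By Lemma \ref{lemma-guoren-F-123} together with Lemma \ref{lemma-luo-essential}, the function $g\triangleq\widetilde{F}_{123}|_{P}$ is $C^1$ and concave on the whole plane $P\cong\mathds{R}^2$, with $\nabla g$ equal to the tangential part of $(\tilde{\theta}_1,\tilde{\theta}_2,\tilde{\theta}_3)$, and $g$ is strictly concave on $\Omega_P$. If $\theta(p)=\theta(q)$ for $p,q\in\Omega_P$, then $\nabla g(p)=\nabla g(q)$; setting $\phi(t)=g(q+t(p-q))$ on $[0,1]$, concavity makes $\phi'$ non-increasing, while $\phi'(0)=\phi'(1)$ forces $\phi'$ constant and $\phi$ affine. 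Since $p$ lies in the open set $\Omega_P$ where $g$ is strictly concave, $\phi$ would be strictly concave on a subinterval near $t=1$, a contradiction unless $p=q$.

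It remains to prove surjectivity, which I expect to be the \emph{main obstacle}. The plan is to show that $\theta\colon\Delta'\to\mathcal{Z}$ is proper; together with openness this makes $\theta(\Delta')$ both open and closed in the connected (indeed convex) set $\mathcal{Z}$, hence equal to $\mathcal{Z}$. Properness amounts to showing that any sequence $p_n\in\Delta'$ leaving every compact subset of $\Delta'$ has $\theta(p_n)$ approaching $\partial\mathcal{Z}$. Such a sequence degenerates in one of two ways: either the radii stay bounded in $(0,\infty)$ and a triangle inequality degenerates, whereupon an inner angle tends to $0$ and $\theta(p_n)$ meets a face $\theta_i=0$ of $\partial\mathcal{Z}$; or, since $\prod_i r_i=1$, some radius tends to $0$ while another tends to $+\infty$, and then Lemma \ref{Lemma-limit-xita} together with (\ref{xita-i-1}) pins down the limit, so that $r_i\to 0$ forces $\theta_i\to\pi-\Lambda(I_{jk})$ (the opposite face of $\partial\mathcal{Z}$) while the two-radii-to-zero case drives the remaining angle to $0$. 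The delicate part is to enumerate these escape scenarios carefully under the constraint $\prod_i r_i=1$ and to confirm that each lands on the correct boundary face, so that no sequence can escape $\Delta'$ while its image stays in the interior of $\mathcal{Z}$. Granting properness, injective plus surjective plus local diffeomorphism yields that $\theta|_{\Delta'}$ is a diffeomorphism onto $\mathcal{Z}$.
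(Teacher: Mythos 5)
Your proposal is correct and follows essentially the same four-step route as the paper's proof: containment $\theta(\Delta')\subset\mathcal{Z}$ via Claim \ref{claim-1}, injectivity via the concavity of Luo's extended potential $\widetilde{F}_{123}$ combined with strict concavity on the slice from Lemma \ref{Lemma-Guo} and Lemma \ref{lemma-guoren-F-123}, properness via the boundary angle limits of Step 3 (your two escape scenarios under $\prod_i r_i=1$ match the paper's enumeration of the faces $\Sigma_{ij}$ and $\partial_{ij}\Delta$, handled by Lemma \ref{Lemma-limit-xita} and (\ref{xita-i-1})), and finally the open-plus-closed-in-connected-$\mathcal{Z}$ argument. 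The only (harmless) deviation is that you obtain openness from the nondegeneracy of the restricted Jacobian rather than from invariance of domain as the paper does, which has the small advantage of making the smoothness of the inverse explicit.
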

\begin{proof}
We follow the approach pioneered by Marden and Rodin \cite{Marden-Rodin}. Note $N=3$ in a single triangle setting, hence $\mathds{1}=(1,1,1)^T$, $r=(r_1,r_2,r_3)^T$ and $u=(u_1,u_2,u_3)^T$, where $u_i=\ln r_i$ for each $i\in\{1,2,3\}$. We divide the proof into four steps.

Step 1: we prove $\theta(\Delta)'\subset\mathcal{Z}$. This fact is essentially proved in Claim 1, that is, for all $r\in\Delta$, $0<\theta_i<\pi-\Lambda(I_{jk})$. Hence $\theta(\Delta')=\theta(\Delta)$ is contained in $\mathcal{Z}$.

Step 2: we prove $\theta:\Delta'\rightarrow\mathcal{Z}$ is injective. This fact is proved by Feng Luo in \cite{Luo1}, we give a new proof here with slight difference. Assuming $\bar{\theta}=(\bar{\theta}_1,\bar{\theta}_2,\bar{\theta}_3)^T$ is realized by some $\bar{r}=(\bar{r}_1,\bar{r}_2,\bar{r}_3)^T\in\Delta$. $\bar{u}=(\bar{u}_1,\bar{u}_2,\bar{u}_3)^T\in \ln\Delta$ is the corresponding metric in $u$-coordinate. If there is a metric $\bar{r}'\in\Delta$ with $\bar{r}\neq\bar{r}'$ that also realizes angle $\bar{\theta}$. Let $p=\ln\bar{r}$ and $q=\bar{r}'$, and define
\begin{equation}
W(u)\triangleq\int_{u_{0}}^{u}(\bar{\theta}_1-\tilde{\theta}_1)du_1+(\bar{\theta}_2-\tilde{\theta}_2)du_2+(\bar{\theta}_3-\tilde{\theta}_3)du_3, \,\,\,u\in\mathds{R}^3
\end{equation}
where $u_0\in\mathds{R}^3$ is arbitrary chosen. It's easy to get $W(u)=-\widetilde{F}_{123}(u)+(u-u_0)^T\bar{\theta}$.
Moreover, $W(u)\in C^1(\mathds{R}^3)$, $\nabla_uW=\bar{\theta}-\tilde{\theta}$ and $W(u)=W(u+t\mathds{1})$ for any $t\in\mathds{R}$. Obviously,
$\nabla_uW|_p=\nabla_uW|_q=0$. Let $\varphi(t)=W(p+t(q-p))$, by Lemma \ref{lemma-luo-essential}, $\widetilde{F}_{123}$ is concave and hence $\varphi(t)$ is convex and $C^1$ for $t\in\mathds{R}$. So $\varphi'(t)$ is monotone increasing. However, $\varphi'(0)=\varphi'(1)=0$, so $\varphi'(t)\equiv 0$ on $[0,1]$, therefore $\varphi(t)$ is constant on $[0,1]$ and there follows $W(p+t(q-p))\equiv W(p)$, for all $t\in[0,1]$. Denote $\Pi_p\triangleq\{u\in\mathds{R}^3|u^T\mathds{1}=p^T\mathds{1}\}$ as the plane in $\mathds{R}^3$ that passing $p$ and perpendicular to the direction $\mathds{1}$. Let $q^*$ be the projection of $q$ onto the plane $\Pi_p$. We claim that $q^*=p$, that is, $p$ and $q$ differs by a parallel move along direction $\mathds{1}$, this implies the original metric
$\bar{r}$ and $\bar{r}'$ differs by a scalar multiplication. We prove the claim by contradiction. Now suppose $q^*\neq p$. On one hand, $W(p)\equiv W(p+t(q-p))$ and $W(u)=W(u+t\mathds{1})$ implies $W(u)$ is a constant on the segment $\overline{pq^*}$. On the other hand, since $\ln\Delta$ is open, $p\in\ln\Delta$ and $q^*\neq p$, we can choose $\varepsilon$ such that $0<\varepsilon<\|q^*-p\|$ and $B(p,\varepsilon)\subset\ln\Delta$. Denote $B^*(p,\varepsilon)=B(p,\varepsilon)\cap\Pi_p$. On $B(p,\varepsilon)$, $W(u)$ is $C^{\infty}$-smooth, $Hess_uW$$=-\frac{\partial(\theta_1, \theta_2, \theta_3)}{\partial(u_1, u_2, u_3)}$ is positive semi-definite with null space $\{t\mathds{1}|t\in\mathds{R}\}$ by Lemma \ref{Lemma-Guo}. Since the null space of $Hess_uW$ is perpendicular to the plane $\Pi_p$, $Hess_uW$ is in fact positive definite when restrict to $\Pi_p$ (consider $W(u)$ as a function of two variables). This implies that $W|_{B^*(p,\varepsilon)}$ is strictly convex on $B^*(p,\varepsilon)\subset\Pi_p$ (or see Lemma \ref{lemma-guoren-F-123}). Now let $\psi(t)=W(p+t(q^*-p))$. $\psi(t)$ is strictly convex on $[0,\frac{\varepsilon}{\|q^*-p\|})$ since $W|_{B^*(p,\varepsilon)}$ is. Hence $\psi'(t)$ is strictly monotone increasing. Note $\psi'(0)=0$, hence $\psi'(t)>0$ for $t>0$, which implies that $\psi(t)$ is strictly increasing on $[0,\frac{\varepsilon}{\|q^*-p\|})$. Then we get a contradiction, since we had already proved $W(u)$ is constant on the segment $\overline{pq^*}$. Thus comes the claim $q^*=p$, which implies that $\bar{r}$ differs from $\bar{r}'$ by a scalar multiplication and the angle map $\theta$ is injective on $\Delta'$.

Step 3: we prove that when $r\in \Delta'$ tends to $\partial\Delta'$, then $\theta(r)$ tends to $\partial\mathcal{Z}$. Now we denote
$\Sigma_{12}\triangleq\{(r_1,r_2,0)|r_1,r_2>0\}$, $\Sigma_{13}\triangleq\{(r_1,0,r_3)|r_1,r_3>0\}$, $\Sigma_{23}\triangleq\{(0,r_2, r_3)|r_2,r_3>0\}$ and
$\partial_{ij}\Delta\triangleq\{r\in\mathds{R}^3_{>0}|l_{ij}=l_{ik}+l_{jk}\}$, where $\big\{\{ij\},\{ik\},\{jk\}\big\}=\big\{\{12\},\{13\},\{23\}\big\}$. According to the shape of $\partial\Delta'$, there are four cases to consider.
\begin{itemize}
  \item If $I_{12}, I_{13}, I_{23}\in[0,1]$, then $\Delta=\mathds{R}^3_{>0}$. For $\{i,j,k\}=\{1,2,3\}$, we need to prove
            \begin{equation}\label{step3-1}
            \lim_{r\in\Delta';\,(r_i, r_j, r_k)\rightarrow (0,0,+\infty)}\theta_k(r)=0,
            \end{equation}
            and for any $b\in(0,+\infty]$,
            \begin{equation}\label{step3-2}
            \lim_{r\in\Delta';\,(r_i, r_j, r_k)\rightarrow (0,b,+\infty)}\theta_i(r)=\pi-\Lambda(I_{jk}).
            \end{equation}
            Let $r^{(n)}\in\Delta'$ be a sequence of metric satisfying $(r_i^{(n)}, r_j^{(n)}, r_k^{(n)})\rightarrow (0,\,0,\,+\infty)$ or $(r_i^{(n)}, r_j^{(n)}, r_k^{(n)})\rightarrow (0,\,b,\,+\infty)$, then either $\theta_k(r^{(n)})\rightarrow 0$ or $\theta_i(r^{(n)})\rightarrow \pi-\Lambda(I_{jk})$.
  \item If $I_{ij}>1$, and $I_{ik}, I_{jk}\in[0,1]$, where $\big\{\{ij\},\{ik\},\{jk\}\big\}=\big\{\{12\},\{13\},\{23\}\big\}$. In this case, $\Delta$ is surrounded
            by three cone-like surfaces $\Sigma_{ik}$, $\Sigma_{jk}$ and $\partial_{ij}\Delta$. We need to prove (\ref{step3-1}) and (\ref{step3-2}) for $\{i,j,k\}=\{1,2,3\}$ too. We further need to prove for each point $(a,b,c)^T\in\partial_{ij}\Delta\cap\big\{\prod_{i=1}^3r_i=1\big\}$,
            \begin{equation}\label{step3-3}
            \lim_{r\in\Delta';\,r\rightarrow (a,b,c)}\theta_k(r)=\pi-\Lambda(I_{ij}).
            \end{equation}
  \item If $I_{ij}, I_{ik}>1$, and $I_{jk}\in[0,1]$, where $\big\{\{ij\},\{ik\},\{jk\}\big\}=\big\{\{12\},\{13\},\{23\}\big\}$. In this case, $\Delta$ is surrounded
            by three cone-like surfaces $\Sigma_{jk}$, $\partial_{ij}\Delta$ and $\partial_{ik}\Delta$. We need to prove (\ref{step3-1}) and (\ref{step3-2}) for $\{i,j,k\}=\{1,2,3\}$. We further need to prove (\ref{step3-3}) for each point $(a,b,c)^T\in\partial_{ij}\Delta\cap\big\{\prod_{i=1}^3r_i=1\big\}$ and
            \begin{equation}\label{step3-4}
            \lim_{r\in\Delta';\,r\rightarrow (a,b,c)}\theta_j(r)=\pi-\Lambda(I_{ik}).
            \end{equation}
            for each point $(a,b,c)^T\in\partial_{ik}\Delta\cap\big\{\prod_{i=1}^3r_i=1\big\}$.
  \item If $I_{12}, I_{13}, I_{23}>1$, then $\Delta$ is surrounded by three surfaces $\partial_{12}\Delta$, $\partial_{13}\Delta$ and
            $\partial_{23}\Delta$. We need to prove (\ref{step3-1}) for $\{i,j,k\}=\{1,2,3\}$. Furthermore, we need to prove (\ref{step3-3}) for each $\{ij\}\in\big\{\{12\},\{13\},\{23\}\big\}$ and each point $(a,b,c)^T\in\partial_{ij}\Delta\cap\big\{\prod_{i=1}^3r_i=1\big\}$.
\end{itemize}

The limit (\ref{step3-4}) is equivalent to limit (\ref{step3-3}). However, the limits (\ref{step3-1})-(\ref{step3-3}) can be proved by the law of cosines directly. We omit the details here, since it is almost the same with the proof of Lemma \ref{Lemma-limit-xita}.

Step 4: We prove the theorem finally. Step 3 implies that the angle map $\theta$ is proper. On one hand, $\theta$ is a closed map, by a pure topological result which says that a proper map $f$ from $X$ to $Y$ is a closed map, if $X$ is Hausdorff and $Y$ is locally compact Hausdorff. On the other hand, both $\Delta'$ and $\mathcal{Z}$ are homeomorphic to $\mathds{R}^2$, hence $\theta$ is an open map by the invariance of domain theorem. Therefore $\theta(\Delta')$ is a closed and open nonempty subset of $\mathcal{Z}$. Note $\mathcal{Z}$ is connected, then $\theta(\Delta')=\mathcal{Z}$, which implies that $\theta:\Delta'\rightarrow\mathcal{Z}$ is a diffeomorphism.\qed
\end{proof}

By Theorem \ref{thm-angle-range-one-triangle}, for one single triangle setting we solved $\theta(\Omega)=\theta(\Delta)$ completely. However, we can not combinatorially combine Theorem \ref{thm-angle-range-one-triangle} together to get the exactly range of $K(\Omega)$, although we can really do so for Andreev-Thurston's $(M, \mathcal{T}, \Phi)$ setting. Consider the general triangulation $(M,\mathcal{T}, I)$ with inversive distance $I\geq0$, suppose $\triangle123$ is a triangle in $\mathcal{T}$. For convenience, we suppose $I_{23}>1$ while other inversive distance are all in $[0,1]$. For any fixed $\bar{r}_2, \bar{r}_3, \cdots, \bar{r}_N>0$, let $\bar{r}_1>0$ be the unique positive solution of equation $$\sqrt{\bar{r}_1^2+\bar{r}_2^2+2\bar{r}_1\bar{r}_2I_{12}}+\sqrt{\bar{r}_1^2+\bar{r}_3^2+2\bar{r}_1\bar{r}_3I_{13}}=\sqrt{\bar{r}_2^2+\bar{r}_3^2+2\bar{r}_2\bar{r}_3I_{23}}.$$ $\Omega$ is the space of all inversive circle packing metrics. It's easy to see $(\bar{r}_1,\bar{r}_2,\cdots,\bar{r}_N)\in\partial\Omega$. Then $\lim\limits_{r_1\rightarrow \bar{r}_1}\theta_1^{23}(r_1,\bar{r}_2,\cdots,\bar{r}_N)=\pi-\Lambda(I_{23})$, while for the other triangle $\{1ij\}$, the inner angle $\theta_1^{ij}$ satisfies $\lim\limits_{r_1\rightarrow \bar{r}_1}\theta_1^{ij}(r_1,\bar{r}_2,\cdots,\bar{r}_N)<\pi-\Lambda(I_{ij})$, hence
$$\lim\limits_{r_1\rightarrow \bar{r}_1}K_1>-\sum_{(e,v)\in Lk(\{1\})}\big(\pi-\Lambda(I_{e})\big)+2\pi\chi(F_{\{1\}}).$$
This shows that even if $r$ tends to the boundary of $\Omega$, the curvature $K_1$ may not tends to the boundary of $Y_A$ in (\ref{def-Y-A}) for $A={\{1\}}$.\\

\noindent \textbf{Acknowledgements}: Both authors would like to thank Professor Gang Tian for constant encouragement. The research is supported by National Natural Science Foundation of China under grant no.11501027. The first author would also like to thank Professor Feng Luo, Ren Guo for many helpful conversations.

Huabin Ge: hbge@bjtu.edu.cn

Department of Mathematics, Beijing Jiaotong University, Beijing 100044, P.R. China\\

Wenshuai Jiang: jiangwenshuai@pku.edu.cn

BICMR and SMS of Peking University, Yiheyuan Road 5, Beijing 100871, P.R. China
\end{document}